\documentclass[11pt,twoside,reqno]{amsart}
\usepackage[all]{xy} 
\usepackage{enumitem}  
\usepackage {amssymb,latexsym,amsthm,amsmath}
\usepackage[colorlinks, linkcolor=blue, citecolor=red]{hyperref}
\usepackage{longtable}
\usepackage{graphicx}
\usepackage{mathtools}
\usepackage{listings}

\usepackage{caption}

\usepackage{xcolor}
\usepackage[utf8]{inputenc}
\usepackage{tikz,quiver}

\long\def\symbolfootnote[#1]#2{\begingroup%
\def\thefootnote{\fnsymbol{footnote}}\footnote[#1]{#2}\endgroup}

\newcommand{\tra}{\ensuremath{{}^t}}

\newcommand{\F}{\mathbb F}

\newcommand{\ad}{\mathrm{ad}}

\newcommand{\Char}{\textup{char}}

\newcommand{\g}{\mathfrak{g}}
\newcommand{\h}{\mathfrak{h}}

\newcommand{\diag}{\textup{diag}}

\makeatletter
\def\imod#1{\allowbreak\mkern10mu({\operator@font mod}\,\,#1)}
\makeatother

\newtheorem{theorem}{Theorem}[section]
\newtheorem{lemma}[theorem]{Lemma}

\newtheorem{proposition}[theorem]{Proposition}
\newtheorem*{theorem*}{Theorem}
\newtheorem*{theorema}{Theorem A}
\theoremstyle{definition}
\newtheorem{remark}[theorem]{Remark}

\numberwithin{equation}{section}

\newcommand{\ignore}[1]{}

\newcommand{\mynote}[1]{}

\def \Sp {\mathrm{Sp}}

\def \G {\mathfrak{g}}

\newcommand{\Fr}{\mathrm{Fr}}
\newcommand{\Tr}{\operatorname{Tr}}

\begin{document}

\setcounter{section}{0}
\setcounter{tocdepth}{1}
\title[$2$-generation of Chevalley Lie algebras over finite fields]{$2$-generation of Chevalley Lie algebras over finite fields}
\author{Ram Karan Choudhary}
\email{ramkchoudhary1997@gmail.com}

\author{Harish Kishnani}
\email{harishkishnani11@gmail.com}
\author{Anupam Singh}
\email{anupamk18@gmail.com}
\address{Indian Institute of Science Education and Research Pune, Dr Homi Bhabha Road, Pashan, Pune 411008 India}

\thanks{The first-named author acknowledges the support of IISER Pune for the institute postdoctoral fellowship. The second-named author acknowledges the support of ANRF-NPDF grant PDF/2025/002961 for the postdoctoral fellowship. The third-named author is funded by an ANRF-MATRICS Grant ANRF/ARGM/2025/000095/MTR}
\subjclass[2020]{15B30, 17B05, 17B20, 17B22}
\keywords{Simple Lie algebras, Finite fields, Generating Sets}
\date{\today}

\begin{abstract}
In this article, we prove that every split simple Chevalley Lie algebra over a finite field of characteristic $p\neq 2$ of classical types $B_n$, $C_n$, and $D_n$, and exceptional types $G_2$, $F_4$, $E_6$, $E_7$, and $E_8$, is $2$-generated. Combined with the result of Cantor, Jezernik, and Zozaya for the classical type $A_n$ [\emph{Two-generation of traceless matrices over finite fields}, Linear Algebra Appl. (2025)], this establishes $2$-generation for all split simple Chevalley Lie algebras over finite fields of characteristic $p\neq 2$.
\end{abstract}
\maketitle


\section{Introduction}
The problem of determining the minimum number of generators required to generate an algebraic structure is a fundamental question in various domains of algebra. It is closely related to the structure of the object, the nature of its sub-objects, and the extent to which its elements can be combined to recover the entire structure. In particular, the question of whether a given algebra can be generated by two elements has attracted considerable attention in group theory, ring theory, and Lie theory. Two-generation results are significant not only because they provide economical generating sets, but also because they often reveal structural properties and lead to explicit descriptions of the objects under consideration.

A fundamental result in finite group theory asserts that every finite non-abelian simple group can be generated by two elements. The development of this result spans several decades. Miller initiated the study of two-generation for alternating groups in \cite{Miller01}, and Steinberg established the corresponding result for finite simple groups of Lie type in \cite{Steinberg62}. The remaining case of sporadic simple groups was settled by Aschbacher and Guralnick in \cite{AsGu84}, thereby completing the proof of the two-generation theorem for finite non-abelian simple groups. This result has inspired further investigations into the generation of finite simple groups under additional constraints on the generating elements. For instance, Burness and Fusari \cite{BurFus25} proved that every finite simple transitive group can be generated by two conjugate derangements. In a related direction, Kishnani and Kundu \cite{KiKu26} showed that any conjugacy class $C$ of derangements in $S_n$ (resp. $A_n$) contains a pair of elements that generate $S_n$ or $A_n$ (resp. $A_n$), unless $C$ is the conjugacy class of fixed-point-free involutions. Another important question, raised by Steinberg in \cite{Steinberg62}, asks whether every finite simple group satisfies the stronger property of being \(\frac{3}{2}\)-generated. A group $G$ is called \(\frac{3}{2}\)-generated if, for every nonidentity element \(x\in G\), there exists an element \(y\in G\) such that \(G=\langle x,y\rangle\). Guralnick and Kantor \cite{GuKa00} answered this question affirmatively. Further related investigations can be found in \cite{BrGuKa08, BuGu13, Ha17}. For a broader overview of generation problems for finite simple groups and their subsequent developments, we refer the reader to a survey article by Burness \cite{Burness19}.

Generation by a small number of elements is also an important problem in matrix algebras (see \cite{LoRo04, petrenkosidki, neumannpraeger}). Analogous generation problems have also been studied extensively for Lie algebras. Kuranishi \cite{Kuranishi} established that simple Lie algebras over fields of characteristic zero are $2$-generated, and explicit generating pairs were subsequently constructed by Grozman and Leites \cite{GrLe96}. Stronger results were obtained by Ionescu \cite{Ionescu}, who established \(\frac{3}{2}\)-generation over fields of characteristic zero, and by Bois \cite{Bois}, who extended this property to infinite fields of characteristic \(p>3\). In a related direction, Chistopolskaya investigated pairs of nilpotent generators for the Lie algebra of trace-zero matrices over infinite fields of characteristic different from $2$, and for the symplectic Lie algebra over algebraically closed fields of characteristic zero (see \cite{Chistopolskaya_sln, Chistopolskaya_spn}).

Let $\mathfrak g$ be a finite-dimensional simple Lie algebra over $\mathbb {C}$. We know that for such an algebra, there exists a Chevalley basis that defines the Lie algebra over $\mathbb Z$, denoted as $\mathfrak g_{\mathbb Z}$. Let $F$ be a field. One defines a Lie algebra $\mathfrak g_F = \mathfrak g_{\mathbb Z}\otimes F$, known as the Chevalley Lie algebra. For a detailed account, we refer the reader to the excellent books~\cite{Ca89, Se67}. The developments over infinite fields motivate the study of two-generation for split simple Chevalley Lie algebras over finite fields, which is the focus of the present work. The recent paper of Cantor, Jezernik, and Zozaya \cite{cjz} makes an
important step in this direction. They prove that the Lie algebra $\mathfrak{psl}_n(\mathbb F_q)$, is $2$-generated except for the
pairs $(n,\Char(\F_q))=(3,3)$ and $(4,2)$. In particular, the $A_n$-type case is settled in
\cite{cjz}. The purpose of the present article is to extend the $A_n$-type result
of \cite{cjz} to all split simple Chevalley Lie algebras over finite
fields. Our main result is the following.


\begin{theorema}\label{thm:main result}
Let $\mathbb F_q$ be a finite field of characteristic $p\neq 2$, and let
$\mathfrak g_{\mathbb F_q}$ be a split simple Chevalley Lie algebra over
$\mathbb F_q$. Then $\mathfrak g_{\mathbb F_q}$ is generated by two elements.
\end{theorema}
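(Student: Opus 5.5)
The plan is the standard one for Chevalley algebras: take one generator to be a regular semisimple element of the split Cartan subalgebra and the other to be a sum of root vectors. Concretely, I fix a Chevalley basis $\{h_i\}_{i=1}^n\cup\{e_\alpha\}_{\alpha\in\Phi}$ of $\mathfrak g_{\mathbb F_q}$ and set
\[
x=h\in\mathfrak h_{\mathbb F_q},\qquad y=\sum_{\alpha\in\Pi}e_\alpha+e_{-\theta},
\]
where $\Pi$ is the set of simple roots and $\theta$ is the highest root. Some variants may be needed, for instance $y=\sum_{\alpha\in\Pi}(e_\alpha+e_{-\alpha})$. The argument then runs in three steps.

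\textbf{Step 1: separating root spaces.} Suppose $h$ can be chosen so that the values $\alpha(h)$, $\alpha\in\Phi$, are nonzero on the summands of $y$ and pairwise distinct, at least on the roots in the support of $y$. Then a Vandermonde argument applied to $\ad(h)^k y$, $k=0,\dots,N-1$, where $N$ is the number of summands, shows that the subalgebra $\langle x,y\rangle$ contains each $e_\alpha$ with $\alpha\in\Pi\cup\{-\theta\}$.

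\textbf{Step 2: reaching all root vectors.} The simple root vectors, brackets among them, and $e_{-\theta}$ generate all root vectors. The structure constants are $\pm(r+1)$ with $r\le 3$, and $p\ne 2$; where $p=3$ matters for $G_2$, a separate path of brackets is chosen. Starting from $e_{-\theta}$ and bracketing with the $e_{\alpha_i}$, one climbs through all negative roots. Hence $\langle x,y\rangle$ contains every root space and all $h_\alpha=[e_\alpha,e_{-\alpha}]$. It is therefore all of $\mathfrak g$, except possibly when $\mathfrak g$ has a center or the $h_\alpha$ fail to span $\mathfrak h$. The latter happens for $E_6$ with $p=3$, $E_7$ with $p=3$, $D_n$-type issues, and $B_n$, $C_n$ simple-quotient subtleties. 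These cases are handled by noting that $h$ itself, together with the span of the $h_\alpha$, covers the Cartan subalgebra, or by working with the simple quotient.

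\textbf{Step 3: the main obstacle.} The existence of a suitable $h$ over small $\mathbb F_q$ is where the difficulty lies. With $|\Phi|$ large (e.g.\ $240$ roots for $E_8$) and $q$ as small as $3$, one cannot demand that all $\alpha(h)$ be distinct. My fix is to require distinctness only among the at most $n+1$ roots $\Pi\cup\{-\theta\}$, which needs $q\ge n+2$ or so. This is still too much for small $q$, so I would instead let $y$ carry coefficients and use the full $\ad(h)$-module structure. Write $y=\sum_{\alpha\in\Pi}e_\alpha$, and choose $h$ with $\alpha_i(h)$ taking few values. Group the simple roots by equal value, and separate the groups via $\ad(h)$. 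Within a group the roots are chosen pairwise orthogonal, as in a bipartite coloring of the Dynkin diagram, so that two values suffice. A single $h$ with $\alpha(h)=1$ on one color class and $-1$ on the other then yields $e_+=\sum_{\text{even}}e_{\alpha_i}$ and $e_-=\sum_{\text{odd}}e_{\alpha_i}$. Together with $f$-type sums from a modified second generator, this produces a principal $\mathfrak{sl}_2$-type configuration. From it I would recover $\mathfrak g$ using the irreducibility of $\mathfrak g$ under the adjoint action of the subalgebra, checked type by type. The small-$p$ exceptions, where the principal nilpotent has too small an order relative to the Coxeter number ($p\le h$), are the real work. For these I would either use an explicit computer check, or use the Cantor–Jezernik–Zozaya result for $\mathfrak{sl}$ subalgebras to capture a large Levi-type subalgebra and then add one root vector.
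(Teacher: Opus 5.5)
\textbf{There is a genuine gap: your argument does not cover the classical types of large rank over a fixed small field.}

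Your Steps 1--2 are sound whenever $\mathfrak h_{\mathbb F_q}$ contains an $h$ whose values on $\Pi\cup\{-\theta\}$ are pairwise distinct. In that case the Vandermonde separation works, and so does the climb from $e_{-\theta}$ and the $e_{\alpha_i}$, for $p\neq 2$. ($G_2$ in characteristic $3$ is not simple and is excluded anyway.) The elements $[e_{\alpha_i},e_{-\alpha_i}]=h_{\alpha_i}$ are Chevalley basis elements, so the Cartan subalgebra is always reached; your list of ``exceptions'' at that point confuses this with the presence of a centre. For the exceptional types this leaves finitely many $q$, where a computer check is legitimate, just as in the paper. Asking for distinctness only on the support of $y$ even gives far smaller thresholds than the paper's requirement that $T$ be consistent on all of $\Phi$, which the paper obtains from Alon's Combinatorial Nullstellensatz.

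\textbf{Why the classical families fail.} For $h\in\mathfrak h_{\mathbb F_q}$ every $\beta(h)$ lies in $\mathbb F_q$, so $\ad(h)$ splits any $y$ into at most $q$ eigencomponents. As soon as $n+1>q$, Step~1 is impossible. For a fixed $q$ this happens for infinitely many ranks, where no computer check can help. Step~3 does not repair this:
\begin{enumerate}
\item With $y=\sum_{\alpha\in\Pi}e_\alpha$, the algebra $\langle h,y\rangle$ lies in the Borel subalgebra $\mathfrak h\oplus\mathfrak n^+$, so it cannot be all of $\mathfrak g$.
\item If you add $f$-terms to $y$, an $h$ with $\alpha_i(h)=\pm1$ on the two colour classes puts $e_+$ and $f_-$ (and likewise $e_-$ and $f_+$) in the same $\ad(h)$-eigenspace. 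So you cannot isolate a principal $\mathfrak{sl}_2$-triple.
\item Even granted such a triple, $\mathfrak g$ is not irreducible under it in rank $\ge 2$. Already in characteristic $0$ it splits into $n$ summands indexed by the exponents.
\item For fixed $p$, the condition ``$p$ at most the Coxeter number'' holds for all large ranks. So your ``small-$p$ exceptions'' are all but finitely many classical cases. Neither a computer search nor an unspecified ``Levi subalgebra plus one more root vector'' argument settles them, since the latter would have to take place inside $\langle x,y\rangle$ for two fixed elements.
\end{enumerate}

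\textbf{The missing idea.} Give up the split torus for the first generator, so that regularity with respect to \emph{all} roots no longer depends on $q$. The paper does this as follows.
\begin{itemize}
\item It takes $x=\tilde C=\diag(C_f,-C_f^{\tra})$ (with a middle zero for $B_n$), where $f$ is the minimal polynomial of a normal element $\xi\in\mathbb F_{q^n}$.
\item Over $\mathbb F_{q^n}$, an isometry $P$ built from the Vandermonde matrix $Q_{ik}=t_i^{k-1}$, with $t_i=\xi^{q^{i-1}}$, conjugates $\tilde C$ to $T=\diag(t_1,\dots,t_n,-t_1,\dots,-t_n)$ (again with a middle zero for $B_n$).
\item Since the $t_i$ are $\mathbb F_q$-linearly independent and $p\neq 2$, the values $\beta(T)$, $\beta\in\Phi$, are pairwise distinct and nonzero for every $q$.
\item The second generator is $P^{-1}XP$, where $X$ is a sum of root vectors. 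Its coefficients are all ones for $C_n$, and a circulant skew-symmetric $\mathbf B$ on the $\pm(\varepsilon_i+\varepsilon_j)$ for $B_n,D_n$. They are chosen so that $P^{-1}XP$ is fixed by Frobenius and hence lies in $\mathfrak g(\mathbb F_q)$.
\item For even $n$ the circulant condition kills the $\pm(\varepsilon_i+\varepsilon_{i+n/2})$ components; these are recovered by a single bracket.
\item The Vandermonde argument then gives generation over $\mathbb F_{q^n}$. This descends to $\mathbb F_q$ because the $\mathbb F_q$-span of brackets in two rational elements tensors up to the corresponding $\mathbb F_{q^n}$-span.
\item Type $A_n$ is quoted from Cantor--Jezernik--Zozaya.
\end{itemize}
Without a device of this kind, or some other argument that is uniform in $n$ for fixed small $q$, your proposal proves Theorem~A only for the exceptional types and for classical algebras whose rank is small compared with $q$.
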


For the classical types, the
$A_n$-case of \cite{cjz} is recovered and extended to $B_n$, $C_n$,
and $D_n$. For the exceptional types, we work directly with the
Chevalley bases and the corresponding root data. We first treat the classical
split Chevalley simple Lie algebras over $\mathbb F_q$ in Section~\ref{section:exceptional} and then exceptional split Chevalley simple Lie algebras over
$\mathbb F_q$ are handled in Section~\ref{section:exceptional}.

\section{Classical split simple Chevalley Lie algebras over $\mathbb{F}_q$}\label{section:classical}

We work over a finite field $\F_q$ of characteristic $p\neq 2$. The split simple Chevalley Lie algebra of classical type $A_n$ is two generated (see~\cite{cjz}). In this section, we consider the split simple Chevalley Lie algebras of classical types $B_n$, $C_n$, and $D_n$, realized as the matrix Lie algebras
\[
\mathfrak{so}_{2n+1}(\F_q),\qquad \mathfrak{sp}_{2n}(\F_q), \quad \text{and} \quad
\mathfrak{so}_{2n}(\F_q),
\]
with the standard nondegenerate bilinear forms
\[
S_{B_n}= \begin{pmatrix} 0 & 0 & I_n\\ 0 & 1 & 0\\ I_n & 0 & 0
\end{pmatrix},
\qquad
S_{C_n}= \begin{pmatrix} 0 & I_n\\ -I_n & 0 \end{pmatrix},
\quad \text{and} \quad
S_{D_n}= \begin{pmatrix} 0 & I_n\\ I_n & 0 \end{pmatrix}
\] 
defining them.
Since $p\neq 2$, these matrix realizations coincide with the corresponding Chevalley Lie algebras obtained by base change from a Chevalley basis over $\mathbb Z$ (see~\cite{chevalley,Ca89}).


The main result of this section is the following.

\begin{theorem}[2-generation of classical Chevalley Lie algebras]
\label{thm:two-generation classical}
Let $p\neq 2$. For $n\ge 1$, the split simple Chevalley Lie algebras
$\mathfrak{so}_{2n+1}(\F_q) \quad \text{and} \quad \mathfrak{sp}_{2n}(\F_q)$ are 2-generated. For $n\ge 3$, the split simple Chevalley Lie algebra $\mathfrak{so}_{2n}(\F_q)$ is 2-generated.
\end{theorem}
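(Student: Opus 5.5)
We will prove this with a standard two-element construction. The first generator is a regular semisimple element of a split Cartan subalgebra. The second is a sum of root vectors whose support contains enough roots to generate everything. Concretely, let $\h$ be the diagonal Cartan subalgebra of the matrix realization given by $S_{B_n}$, $S_{C_n}$ or $S_{D_n}$. We choose
\[
x=\diag(t_1,\dots,t_n,-t_1,\dots,-t_n)\quad(\text{with an extra }0\text{ in type }B_n),
\]
and we choose $y=\sum_{\alpha\in\Delta\cup\{-\theta\}} e_\alpha$. Here $\Delta$ is the set of simple roots, $\theta$ is the highest root, and the $e_\alpha$ are Chevalley basis elements.

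\textbf{Step 1: separate the root components of $y$.} Let $\h_1$ be the span of the root spaces occurring in $y$. Since each $e_\alpha$ is an eigenvector of $\ad x$ with eigenvalue $\alpha(x)$, we apply $\ad x$ repeatedly to $y$. If the values $\alpha(x)$ for $\alpha\in\Delta\cup\{-\theta\}$ are pairwise distinct, a Vandermonde argument shows that the subalgebra generated by $x$ and $y$ contains each $e_\alpha$ separately.

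\textbf{Step 2: recover all of $\g$.} Brackets of the $e_\alpha$ with $\alpha\in\Delta$ produce all positive root vectors. This uses the Chevalley structure constants $\pm(r+1)$, which are nonzero mod $p$ when $p\neq2$, except for a possible factor $3$ in type $G_2$, which does not arise here. Next, $e_{-\theta}$ together with the positive root vectors yields the negative root vectors. Finally, brackets $[e_\alpha,e_{-\alpha}]=h_\alpha$ give $\h$. For the classical types this can be done directly with the matrix units $E_{ij}\mp E_{j'i'}$.

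\textbf{Main obstacle: choosing $x$ over a small field.} The obstacle is to find $x$ with $\alpha(x)$ pairwise distinct on the $n+1$ roots of $\Delta\cup\{-\theta\}$ when $q$ is small, for example $q=3$. The values here are $t_i-t_{i+1}$, $t_n$ or $2t_n$ or $t_{n-1}+t_n$, and $-(t_1+t_2)$ or $-2t_1$. Over $\F_q$ there are only $q$ possible values, while we need $n+1$ distinct ones, so distinct eigenvalues on $\F_q$ alone is impossible for large $n$.

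We would try two remedies. The first is to take $x$ only "regular enough": we drop the requirement of full separation and instead separate $y$ into its $\ad x$-eigencomponents, each a sum of several $e_\alpha$. We then argue inductively that the roots in one eigencomponent are mutually orthogonal and non-adjacent, so that bracketing with neighbouring components isolates them. The second remedy, which is more robust, replaces the split diagonal $x$ by a semisimple element whose eigenvalues lie in $\F_{q^{N}}$, such as a companion-type element of an anisotropic or Coxeter torus. Its $\ad$-eigenvalues are then distinct in an extension field, and Step 1 works after extending scalars: a subalgebra defined over $\F_q$ whose extension contains $e_\alpha\otimes1$ must equal $\g$ by Galois descent.

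I expect this choice of $x$, together with the careful handling of small $q$ and small $n$ ($B_1$, $C_1$, $D_3\cong A_3$), to be the main work. The case $D_3$ can be cited from \cite{cjz} via $\mathfrak{so}_6\cong\mathfrak{sl}_4$.
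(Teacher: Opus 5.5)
Your overall scheme matches the paper's: separate the root components of $y$ by a Vandermonde argument in $\ad x$, then descend from an extension field. Your second remedy, a companion-type $x$ with eigenvalues in $\F_{q^N}$, is exactly the paper's first generator $\tilde C$. The genuine gap is the second generator.

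\textbf{Why $y=\sum_{\Delta\cup\{-\theta\}}e_\alpha$ cannot work.} If you keep the $e_\alpha$ as the rational root vectors of the split torus, they are not $\ad x$-eigenvectors for a non-split $x$, and Step~1 says nothing. If instead you take them relative to the torus centralizing $x$, they exist only over $\F_{q^N}$. Frobenius then permutes the root spaces of that torus by an automorphism $\tau$ of its root system, with $(\tau\alpha)(x)=\alpha(x)^q$. Comparing components of $y$ and $\Fr(y)=y$ shows that a rational $y$ must have $\tau$-stable support. Your support $\Delta\cup\{-\theta\}$ is not $\tau$-stable. For the paper's torus ($t_i=\Fr^{i-1}(\alpha)$), $\tau$ is the cyclic shift $\varepsilon_i\mapsto\varepsilon_{i+1}$ (indices mod $n$), which sends $\varepsilon_{n-1}-\varepsilon_n$ to $\varepsilon_n-\varepsilon_1$.

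No other torus or base rescues it either. A $\tau$ permuting an extended base induces, faithfully since that base spans, an automorphism of the extended Dynkin diagram. In types $B$, $C$, $D$ these automorphisms have order at most $4$. So $\tau^k=1$ for some $k\le 4$, every $\alpha(x)$ lies in $\F_{q^k}$, and your own counting obstruction returns once $n+1>q^4$. Hence ``Step~1 works after extending scalars'' is not available for your $y$. Descent (your Galois argument, the paper's Lemma~\ref{lem:descent}) only helps once both generators already lie in $\g(\F_q)$. Your first remedy, partial separation with a split $x$, is not an argument as it stands.

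\textbf{What is missing: the paper's rationality step.} Take $P=\diag((Q^{-1})^{\tra},Q)$ (with a middle $1$ for $B_n$), where $Q_{ik}=\Fr^{i-1}(\alpha^{k-1})$. Then $P\tilde CP^{-1}=T$, and $Q^{\Fr}=RQ$ for the cyclic shift $R$, so $P^{\Fr}=\diag(R,R)P$ (resp.\ $\diag(R,1,R)P$). Consequently $P^{-1}XP$ is Frobenius-fixed as soon as $X$ is rational and commutes with $\diag(R,R)$ (resp.\ $\diag(R,1,R)$). This is why the paper's $X$ is supported on (almost) all of $\Phi$, a $\tau$-stable set, and is built from $ee^{\tra}$, $I$ and a circulant skew-symmetric $\mathbf B$ (Lemmas~\ref{lem:rational-Cn}, \ref{lem:frob-blocks}, \ref{lem:rational-BD}). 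The linear independence of the $t_i$ then makes $T$ consistent on all of $\Phi$ (Remark~\ref{rem:consistent}).

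This route has a subtlety you would also meet. For $B_n$ and $D_n$ with $n=2m$, a circulant skew-symmetric matrix must vanish at the entries $(i,i+m)$ when $p\neq 2$. So $\pm(\varepsilon_i+\varepsilon_{i+m})$ drop out of the support and must be recovered by brackets, as in Theorem~\ref{thm:two-gen-even}.

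With such a Frobenius-compatible second generator in place of $\sum_{\Delta\cup\{-\theta\}}e_\alpha$, your Step~1, a much lighter Step~2, and the descent go through. Your treatment of $B_1$, $C_1$ and $D_3$ via \cite{cjz} is fine.
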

\noindent In what follows, we prove this theorem case by case by constructing the generating elements.

\subsection{Companion block matrices}
\label{subsec:companion}

Let $\F_q$ be a finite field of characteristic $p\neq 2$, and let
\[
f(x)=x^n+a_{n-1}x^{n-1}+\cdots+a_1x+a_0\in\F_q[x]
\]
be monic of degree $n$. Let $C_f\in M_n(\F_q)$ denote its companion matrix,
taken in the form
\[
C_f=
\begin{pmatrix}
0 & 1 & 0 & \cdots & 0\\
0 & 0 & 1 & \cdots & 0\\
\vdots & \vdots & \vdots & \ddots & \vdots\\
0 & 0 & 0 & \cdots & 1\\
-a_0 & -a_1 & -a_2 & \cdots & -a_{n-1}
\end{pmatrix}.
\]
With this convention, the Vandermonde column vector
\[
v_i^{\tra}=(1,t_i,t_i^2,\dots,t_i^{n-1})^{\tra}
\]
is a right eigenvector of $C_f$ with eigenvalue $t_i$, i.e.\
$C_fv_i^{\tra}=t_iv_i^{\tra}$. Indeed, the first $n-1$ coordinates of
$C_fv_i^{\tra}$ are $t_i,t_i^2,\dots,t_i^{n-1}$, and the last coordinate is
\[
-a_0-a_1t_i-\cdots-a_{n-1}t_i^{n-1}=t_i^n,
\]
where we used $f(t_i)=0$. Hence, we have
$C_fv_i^{\tra}=t_i(1,t_i,\dots,t_i^{n-1})^{\tra}=t_iv_i^{\tra}$.
Transposing gives
\[
v_iC_f^{\tra}=t_iv_i,
\]
so the Vandermonde row vector $v_i=(1,t_i,\dots,t_i^{n-1})$ is a left
eigenvector of $C_f^{\tra}$ with eigenvalue $t_i$.

\begin{lemma}[Companion block matrices in classical Lie algebras]
\label{lem:companion-blocks}
Let $p\neq 2$ and let $f\in\F_q[x]$ be monic of degree $n$ with companion
matrix $C_f$ as above. Define
\[
\tilde{C}_{B_n}
=
\begin{pmatrix}
C_f & 0 & 0\\
0 & 0 & 0\\
0 & 0 & -C_f^{\tra}
\end{pmatrix}
\in M_{2n+1}(\F_q),
\]
\[
\tilde{C}_{C_n}
=
\begin{pmatrix}
C_f & 0\\
0 & -C_f^{\tra}
\end{pmatrix}
\in M_{2n}(\F_q),
\quad \text{and} \quad
\tilde{C}_{D_n}
=
\begin{pmatrix}
C_f & 0\\
0 & -C_f^{\tra}
\end{pmatrix}
\in M_{2n}(\F_q).
\]
Then
\begin{enumerate}
\item $\tilde{C}_{B_n}\in\mathfrak{so}_{2n+1}(\F_q)$;
\item $\tilde{C}_{C_n}\in\mathfrak{sp}_{2n}(\F_q)$;
\item $\tilde{C}_{D_n}\in\mathfrak{so}_{2n}(\F_q)$.
\end{enumerate}
Moreover, over a splitting field of $f$, the eigenvalues of $\tilde C$ are
\[
\begin{cases}
0,\ t_1,\dots,t_n,\ -t_1,\dots,-t_n, & \text{for } B_n,\\[1mm]
t_1,\dots,t_n,\ -t_1,\dots,-t_n, & \text{for } C_n,\\[1mm]
t_1,\dots,t_n,\ -t_1,\dots,-t_n, & \text{for } D_n,
\end{cases}
\]
where $t_1,\dots,t_n$ are the roots of $f$.
\end{lemma}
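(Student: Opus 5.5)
The plan is to verify membership directly from the defining condition. For a Lie algebra defined by a nondegenerate form $S$, we have
\[
\mathfrak g_S=\{X : X^{\tra}S+SX=0\}.
\]
For each of $S_{B_n}$, $S_{C_n}$ and $S_{D_n}$ we compute $X^{\tra}S+SX$ blockwise with $X=\tilde C$. Write $\tilde C=\diag(A,0,-A^{\tra})$ in type $B_n$ and $\tilde C=\diag(A,-A^{\tra})$ in types $C_n$ and $D_n$, with $A=C_f$. In fact only the block-diagonal shape matters, so we may prove the statement for an arbitrary $A\in M_n(\F_q)$ and then specialize to $A=C_f$.

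For $D_n$, we have $\tilde C^{\tra}=\diag(A^{\tra},-A)$. Then $\tilde C^{\tra}S_{D_n}$ has off-diagonal blocks $A^{\tra}$ in position $(1,2)$ and $-A$ in position $(2,1)$. Similarly, $S_{D_n}\tilde C$ has blocks $-A^{\tra}$ in position $(1,2)$ and $A$ in position $(2,1)$. These cancel, so the sum is zero.

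For $C_n$ the same computation applies with the sign of $S_{C_n}$. Here $\tilde C^{\tra}S_{C_n}$ has blocks $A^{\tra}$ in position $(1,2)$ and $A$ in position $(2,1)$. Also $S_{C_n}\tilde C$ has blocks $-A^{\tra}$ in position $(1,2)$ and $-A$ in position $(2,1)$. Again the sum vanishes.

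For $B_n$, the middle row and column of $\tilde C$ are zero. Hence the middle entries of both products are zero, and the outer $2\times 2$ block structure reduces exactly to the $D_n$ computation. This gives (1)--(3).

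For the eigenvalues, $\tilde C$ is block diagonal, so its characteristic polynomial is the product of the characteristic polynomials of the blocks. The characteristic polynomial of $C_f$ is $f$; this follows from the standard companion-matrix fact, or from the Vandermonde eigenvectors noted above when the $t_i$ are distinct. The characteristic polynomial of $-C_f^{\tra}$ is $\det(xI+C_f)$ up to sign, namely $(-1)^nf(-x)$, whose roots are $-t_1,\dots,-t_n$. The extra $1\times1$ zero block in type $B_n$ contributes the eigenvalue $0$.

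There is no real obstacle. The only care needed is keeping track of the signs in the antisymmetric form $S_{C_n}$ and of the block positions in $S_{B_n}$, and stating the eigenvalues as roots with multiplicity over a splitting field.
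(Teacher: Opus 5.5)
Your proposal is correct and follows essentially the paper's route: a direct blockwise check of $X^{\tra}S+SX=0$ for each of the three forms, followed by reading off the eigenvalues from the block-diagonal structure. The differences are cosmetic: you note that the computation works for any $A\in M_n(\F_q)$ in place of $C_f$, reduce $B_n$ to the $D_n$ computation, and use characteristic polynomials for the eigenvalues, whereas the paper checks skew-symmetry of $S\tilde C$ separately for $B_n$ and $D_n$.
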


\begin{proof}
We verify the defining condition for each type.

\medskip
\noindent\textbf{Type $B_n$.}
The form is
\[
S_{B_n}=
\begin{pmatrix}
0 & 0 & I_n\\
0 & 1 & 0\\
I_n & 0 & 0
\end{pmatrix},
\]
so that
\[
\mathfrak{so}_{2n+1}(\F_q)
=
\{X\in M_{2n+1}(\F_q):X^{\tra}S_{B_n}+S_{B_n}X=0\}.
\]
With blocks of sizes $n,1,n$,
\[
S_{B_n}\tilde{C}_{B_n}
=
\begin{pmatrix}
0 & 0 & I_n\\
0 & 1 & 0\\
I_n & 0 & 0
\end{pmatrix}
\begin{pmatrix}
C_f & 0 & 0\\
0 & 0 & 0\\
0 & 0 & -C_f^{\tra}
\end{pmatrix}
=
\begin{pmatrix}
0 & 0 & -C_f^{\tra}\\
0 & 0 & 0\\
C_f & 0 & 0
\end{pmatrix}.
\]
Taking transposes,
\[
(S_{B_n}\tilde{C}_{B_n})^{\tra}
=
\begin{pmatrix}
0 & 0 & C_f^{\tra}\\
0 & 0 & 0\\
-C_f & 0 & 0
\end{pmatrix}
=
-S_{B_n}\tilde{C}_{B_n}.
\]
Since $S_{B_n}$ is symmetric,
\[
\tilde{C}_{B_n}^{\tra}S_{B_n}+S_{B_n}\tilde{C}_{B_n}=0.
\]
Therefore, we have $\tilde{C}_{B_n}\in\mathfrak{so}_{2n+1}(\F_q)$.

\medskip
\noindent\textbf{Type $C_n$.}
The form is
\[
S_{C_n}=
\begin{pmatrix}
0 & I_n\\
-I_n & 0
\end{pmatrix},
\]
so that
\[
\mathfrak{sp}_{2n}(\F_q)
=
\{X\in M_{2n}(\F_q):X^{\tra}S_{C_n}+S_{C_n}X=0\}.
\]
Compute
\[
\tilde{C}_{C_n}^{\tra}S_{C_n}
=
\begin{pmatrix}
C_f^{\tra} & 0\\
0 & -C_f
\end{pmatrix}
\begin{pmatrix}
0 & I_n\\
-I_n & 0
\end{pmatrix}
=
\begin{pmatrix}
0 & C_f^{\tra}\\
C_f & 0
\end{pmatrix},
\]
and
\[
S_{C_n}\tilde{C}_{C_n}
=
\begin{pmatrix}
0 & I_n\\
-I_n & 0
\end{pmatrix}
\begin{pmatrix}
C_f & 0\\
0 & -C_f^{\tra}
\end{pmatrix}
=
\begin{pmatrix}
0 & -C_f^{\tra}\\
-C_f & 0
\end{pmatrix}.
\]
Hence, we get
\[
\tilde{C}_{C_n}^{\tra}S_{C_n}+S_{C_n}\tilde{C}_{C_n}=0.
\]
Therefore, we have $\tilde{C}_{C_n}\in\mathfrak{sp}_{2n}(\F_q)$.

\medskip
\noindent\textbf{Type $D_n$.}
The form is
\[
S_{D_n}=
\begin{pmatrix}
0 & I_n\\
I_n & 0
\end{pmatrix},
\]
so that
\[
\mathfrak{so}_{2n}(\F_q)
=
\{X\in M_{2n}(\F_q):X^{\tra}S_{D_n}+S_{D_n}X=0\}.
\]
Compute
\[
S_{D_n}\tilde{C}_{D_n}
=
\begin{pmatrix}
0 & I_n\\
I_n & 0
\end{pmatrix}
\begin{pmatrix}
C_f & 0\\
0 & -C_f^{\tra}
\end{pmatrix}
=
\begin{pmatrix}
0 & -C_f^{\tra}\\
C_f & 0
\end{pmatrix}.
\]
Taking transposes,
\[
(S_{D_n}\tilde{C}_{D_n})^{\tra}
=
\begin{pmatrix}
0 & C_f^{\tra}\\
-C_f & 0
\end{pmatrix}
=
-S_{D_n}\tilde{C}_{D_n}.
\]
Since $S_{D_n}$ is symmetric,
\[
\tilde{C}_{D_n}^{\tra}S_{D_n}+S_{D_n}\tilde{C}_{D_n}=0.
\]
Therefore $\tilde{C}_{D_n}\in\mathfrak{so}_{2n}(\F_q)$.

The eigenvalue assertion follows because the eigenvalues of $C_f$ are the roots
$t_1,\dots,t_n$ of $f$, the eigenvalues of $-C_f^{\tra}$ are
$-t_1,\dots,-t_n$, and for type $B_n$ the additional zero block contributes the
eigenvalue $0$. This completes the proof of Lemma~\ref{lem:companion-blocks}.
\end{proof}

\begin{remark}
\label{rem:consistent}
If $f$ is the minimal polynomial of a normal element $\alpha\in\F_{q^n}$ over
$\F_q$, then $t_i=\Fr^{i-1}(\alpha)$ are linearly independent over $\F_q$. In
particular $t_i\neq0$ and $t_i\neq\pm t_j$ for $i\neq j$. Since $p\neq2$, every
equality between two root evaluations $\alpha(T)$ for the diagonal matrices $T$
below gives a nontrivial $\F_q$-linear relation among the $t_i$, which is
impossible. Hence, $T$ is consistent, i.e.\ the values $\alpha(T)$,
$\alpha\in\Phi$, are pairwise distinct and nonzero.
\end{remark}

\subsection{Root vectors and the second generator}
\label{subsec:root-vectors} In this subsection, we briefly introduce the roots and the corresponding root vectors of the split simple Chevalley Lie algebras of classical types $B_n$, $C_n$, and $D_n$, and then prove a couple of prerequisite results that will be used in the subsequent subsections to establish our main results.

\subsubsection{Notation for elementary matrices}

We use two kinds of elementary matrices.

\begin{itemize}
\item $E_{ij}$ denotes the full $N\times N$ elementary matrix, where $N$ is the
size of the full matrix ($N=2n+1$ for type $B_n$, $N=2n$ for types $C_n$ and
$D_n$). The indices range over
\[
\{1,2,\dots,n,0,-1,-2,\dots,-n\}\quad\text{for }B_n,
\]
\[
\{1,2,\dots,n,-1,-2,\dots,-n\}\quad\text{for }C_n,D_n.
\]
This $E_{ij}$ is used in the root-vector table and in the sums defining $X$.
\item $F_{ij}$ denotes the $n\times n$ elementary matrix, with indices
$i,j\in\{1,\dots,n\}$. It is used only inside the block forms of $X$.
\end{itemize}

\subsubsection{Cartan subalgebra and roots}

Let $\g$ be one of
\[
\mathfrak{so}_{2n+1},\qquad \mathfrak{sp}_{2n},\quad \text{and} \quad \mathfrak{so}_{2n}
\]
over a field $\F$ of characteristic $p\neq 2$, realized as a subalgebra of
$M_N(\F)$ preserving a bilinear form with matrix $S$:
\[
\g=\{X\in M_N(\F):X^{\tra}S+SX=0\}.
\]
The Cartan subalgebra $\h$ consists of the diagonal matrices
\[
\h=\left\{H(a)=\sum_{i=1}^n a_i(E_{i,i}-E_{-i,-i}):a_i\in\F\right\},
\]
with the convention $a_0=0$ for type $B_n$. Define $\varepsilon_i\in\h^*$ by
$\varepsilon_i(H(a))=a_i$. Then
\[
\Phi=
\begin{cases}
\{\pm\varepsilon_i,\ \pm\varepsilon_i\pm\varepsilon_j\ (i<j)\}, & \text{for } B_n,\\[1mm]
\{\pm 2\varepsilon_i,\ \pm\varepsilon_i\pm\varepsilon_j\ (i<j)\}, & \text{for } C_n,\\[1mm]
\{\pm\varepsilon_i\pm\varepsilon_j\ (i<j)\}, & \text{for } D_n.
\end{cases}
\]
The Lie algebra decomposes as
\[
\g=\h\oplus\bigoplus_{\alpha\in\Phi}\g_\alpha,
\]
where each root space $\g_\alpha$ is one-dimensional over $\F$
(see~\cite{Humphreys}). A root vector for $\alpha$ is a nonzero element
$e_\alpha\in\g_\alpha$.

\subsubsection{Root vectors in the standard matrix realization}

Using the commutator identity
\[
[E_{ij},E_{kl}]=\delta_{jk}E_{il}-\delta_{li}E_{kj},
\]
one checks that the following matrices are root vectors (see~\cite[Chapter~12]{erdmann}). For type $B_n$ the
extra coordinate is $0$; for all types, $-i$ denotes the coordinate paired with
$i$ by the form $S$.

\medskip
\noindent\textbf{Type $B_n$} (coordinates $1,\dots,n,0,-1,\dots,-n$):
\[
\begin{array}{c|c}
\text{Root }\alpha & \text{Root vector }e_\alpha\\ \hline
\varepsilon_i-\varepsilon_j\ (i\ne j) & E_{i,j}-E_{-j,-i}\\
\varepsilon_i+\varepsilon_j\ (i<j) & E_{i,-j}-E_{j,-i}\\
-\varepsilon_i-\varepsilon_j\ (i<j) & E_{-i,j}-E_{-j,i}\\
\varepsilon_i & E_{i,0}-E_{0,-i}\\
-\varepsilon_i & E_{-i,0}-E_{0,i}
\end{array}
\]

\medskip
\noindent\textbf{Type $C_n$} (coordinates $1,\dots,n,-1,\dots,-n$):
\[
\begin{array}{c|c}
\text{Root }\alpha & \text{Root vector }e_\alpha\\ \hline
\varepsilon_i-\varepsilon_j\ (i\ne j) & E_{i,j}-E_{-j,-i}\\
\varepsilon_i+\varepsilon_j\ (i<j) & E_{i,-j}+E_{j,-i}\\
-\varepsilon_i-\varepsilon_j\ (i<j) & E_{-i,j}+E_{-j,i}\\
2\varepsilon_i & E_{i,-i}\\
-2\varepsilon_i & E_{-i,i}
\end{array}
\]

\medskip
\noindent\textbf{Type $D_n$} (coordinates $1,\dots,n,-1,\dots,-n$):
\[
\begin{array}{c|c}
\text{Root }\alpha & \text{Root vector }e_\alpha\\ \hline
\varepsilon_i-\varepsilon_j\ (i\ne j) & E_{i,j}-E_{-j,-i}\\
\varepsilon_i+\varepsilon_j\ (i<j) & E_{i,-j}-E_{j,-i}\\
-\varepsilon_i-\varepsilon_j\ (i<j) & E_{-i,j}-E_{-j,i}
\end{array}
\]

Each listed matrix is nonzero and satisfies $e_\alpha^{\tra}S+Se_\alpha=0$, so
it lies in $\g$. A direct computation gives $[H,e_\alpha]=\alpha(H)e_\alpha$
for all $H\in\h$, so $e_\alpha\in\g_\alpha$.

\subsubsection{The element $X$}

For type $C_n$ define
\[
\begin{aligned}
X_{C_n}
&= \sum_{1\le i\ne j\le n}(E_{i,j}-E_{-j,-i})
 + \sum_{1\le i<j\le n}(E_{i,-j}+E_{j,-i}) \\
&\quad + \sum_{1\le i<j\le n}(E_{-i,j}+E_{-j,i})
 + \sum_{i=1}^{n}E_{i,-i}
 + \sum_{i=1}^{n}E_{-i,i}.
\end{aligned}
\]
This is the sum of one nonzero root vector from every root space, so
$X_{C_n}\in\mathfrak{sp}_{2n}(\F_q)$ has nonzero component in every root space.

For types $B_n$ and $D_n$, let $\mathbf B=(b_{ij})\in M_n(\F_q)$ be
skew-symmetric, with $b_{ii}=0$ and $b_{ji}=-b_{ij}$. Define
\[
X_{D_n}(\mathbf B)
=
\sum_{1\le i\ne j\le n}(E_{i,j}-E_{-j,-i})
+\sum_{1\le i<j\le n}b_{ij}\bigl(E_{i,-j}-E_{j,-i}\bigr)
+\sum_{1\le i<j\le n}b_{ij}\bigl(E_{-i,j}-E_{-j,i}\bigr),
\]
and
\[
\begin{aligned}
X_{B_n}(\mathbf B)
&=
\sum_{1\le i\ne j\le n}(E_{i,j}-E_{-j,-i})
+\sum_{1\le i<j\le n}b_{ij}\bigl(E_{i,-j}-E_{j,-i}\bigr)
+\sum_{1\le i<j\le n}b_{ij}\bigl(E_{-i,j}-E_{-j,i}\bigr)\\
&\quad
+\sum_{i=1}^{n}(E_{i,0}-E_{0,-i})
+\sum_{i=1}^{n}(E_{-i,0}-E_{0,i}).
\end{aligned}
\]
Then $X_{D_n}(\mathbf B)\in\mathfrak{so}_{2n}(\F_q)$ and
$X_{B_n}(\mathbf B)\in\mathfrak{so}_{2n+1}(\F_q)$.

Writing matrices in blocks of sizes $n,n$ (respectively $n,1,n$), we have
\[
X_{C_n}=
\begin{pmatrix}
\mathbf 1 & \mathbf S\\
\mathbf S & -\mathbf 1
\end{pmatrix},
\qquad
X_{D_n}(\mathbf B)=
\begin{pmatrix}
\mathbf 1 & \mathbf B\\
\mathbf B & -\mathbf 1
\end{pmatrix},
\]
and
\[
X_{B_n}(\mathbf B)=
\begin{pmatrix}
\mathbf 1 & e & \mathbf B\\
-e^{\tra} & 0 & -e^{\tra}\\
\mathbf B & e & -\mathbf 1
\end{pmatrix},
\]
where
\[
e=(1,\dots,1)^{\tra},\qquad
\mathbf 1=ee^{\tra}-I,\quad \text{and} \quad
\mathbf S=ee^{\tra}.
\]

Next, we prove Lemma~\ref{lem:generation-splitting}, which is integral to the proof of our main results. This lemma serves as an analogue of \cite[Lemma~2.1]{Chistopolskaya_sln} for classical Lie algebras over a closed field of characteristic zero.
\begin{lemma}[Generation by a consistent element and $X$]
\label{lem:generation-splitting}
Let $\g$ be one of $\mathfrak{so}_{2n+1}$, $\mathfrak{sp}_{2n}$,
$\mathfrak{so}_{2n}$ over a splitting field $\F$ of $f$, with root system
$\Phi$, Cartan subalgebra $\h$, and root space decomposition
\[
\g=\h\oplus\bigoplus_{\alpha\in\Phi}\g_\alpha.
\]
Let $T\in\h$ be consistent, i.e.\ $\alpha(T)\neq0$ for all $\alpha\in\Phi$ and
$\alpha(T)\neq\beta(T)$ for all distinct $\alpha,\beta\in\Phi$. Let $X\in\g$
have nonzero component in every root space $\g_\alpha$. Then $T$ and $X$
generate $\g$.
\end{lemma}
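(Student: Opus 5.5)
Let $\mathfrak a$ denote the subalgebra generated by $T$ and $X$. Write $X=X_0+\sum_{\alpha\in\Phi}x_\alpha$ with $X_0\in\h$ and $x_\alpha\in\g_\alpha$; by hypothesis every $x_\alpha\neq 0$. The plan has three stages: use $\ad T$ to separate $X$ into its root components, show that $\mathfrak a$ then contains every root space, and finally recover $\h$ from brackets of opposite root vectors.

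\textbf{Isolating the root components.} Since $[T,X_0]=0$ and $[T,x_\alpha]=\alpha(T)x_\alpha$, we have
\[
(\ad T)^k X=\sum_{\alpha\in\Phi}\alpha(T)^k x_\alpha \quad\text{for all } k\ge 1.
\]
Let $m=|\Phi|$. The vectors $(\ad T)^kX$ for $k=1,\dots,m$ all lie in $\mathfrak a$. They are obtained from $(x_\alpha)_{\alpha\in\Phi}$ by the matrix $\bigl(\alpha(T)^k\bigr)_{k,\alpha}$. This matrix equals a Vandermonde matrix in the values $\alpha(T)$, multiplied by the diagonal matrix with entries $\alpha(T)$. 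It is invertible precisely because $T$ is consistent: the values $\alpha(T)$ are pairwise distinct and nonzero. Inverting it, each $x_\alpha$ is an $\F$-linear combination of elements of $\mathfrak a$, so $x_\alpha\in\mathfrak a$. As $\dim\g_\alpha=1$ and $x_\alpha\neq0$, this gives $\g_\alpha\subseteq\mathfrak a$ for every $\alpha\in\Phi$.

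\textbf{Recovering the Cartan subalgebra.} It remains to show $\h\subseteq\mathfrak a$, and this is the one step needing care in characteristic $p$. For each $\alpha$, the bracket $[\g_\alpha,\g_{-\alpha}]$ is spanned by the coroot $h_\alpha$, and $h_\alpha\neq0$. I would check this on the explicit root vectors in the tables. For example:
\begin{itemize}
\item $[E_{i,j}-E_{-j,-i},\,E_{j,i}-E_{-i,-j}]=E_{ii}-E_{jj}-E_{-i,-i}+E_{-j,-j}$;
\item in type $C_n$, $[E_{i,-i},E_{-i,i}]=E_{ii}-E_{-i,-i}$;
\item in type $B_n$, $[E_{i,0}-E_{0,-i},\,E_{-i,0}-E_{0,i}]=\pm(E_{ii}-E_{-i,-i})$, with a sign that does not matter.
\end{itemize}

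These elements span $\h$ over any field of characteristic $\neq2$:
\begin{itemize}
\item in types $B_n$ and $C_n$ they already include every $E_{ii}-E_{-i,-i}$;
\item in type $D_n$, the roots $\varepsilon_i-\varepsilon_j$ and $\varepsilon_i+\varepsilon_j$ give $H(e_i-e_j)$ and $H(e_i+e_j)$, up to sign. Adding and halving (using $p\ne2$) yields $H(e_i)$, provided $n\ge2$.
\end{itemize}
Hence $\h\subseteq\mathfrak a$, and so $\mathfrak a=\g$.

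I expect the only real subtlety to be this last step: checking that the coroots span $\h$ in small characteristic. The explicit matrix computation above handles it, and it is exactly where the hypothesis $p\neq2$ is used for type $D_n$. The Vandermonde step is routine once consistency is in hand.
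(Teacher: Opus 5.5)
Your proof is correct and follows the same route as the paper: it isolates the root components of $X$ by a Vandermonde argument on iterated brackets with $T$, then obtains $\h$ from the brackets $[\g_\alpha,\g_{-\alpha}]$. Two small refinements on your side are worth keeping. First, taking exponents $k=1,\dots,|\Phi|$ and using $\alpha(T)\neq0$ handles a possible Cartan component of $X$, whereas the paper tacitly writes $X=\sum_\alpha c_\alpha e_\alpha$ and starts at $k=0$. Second, your explicit check that the coroots span $\h$, using $p\neq2$ in type $D_n$, fills in a step the paper only asserts.
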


\begin{proof}
Write
\[
X=\sum_{\alpha\in\Phi}c_\alpha e_\alpha,
\]
where $e_\alpha\in\g_\alpha$ is nonzero and $c_\alpha\neq0$. Consider
\[
A_k=[X,T,\overset{k}{\dots},T],\qquad k=0,1,\dots,|\Phi|-1.
\]
Since $T\in\h$, we have $[e_\alpha,T]=\alpha(T)e_\alpha$, so
\[
A_k=\sum_{\alpha\in\Phi}c_\alpha(\alpha(T))^k e_\alpha.
\]
In the basis $\{e_\alpha\}_{\alpha\in\Phi}$ of
$\bigoplus_{\alpha\in\Phi}\g_\alpha$, the coordinates of
$A_0,\dots,A_{|\Phi|-1}$ form the matrix
$\diag(c_\alpha)\cdot V$, where
\[
V=((\alpha(T))^k)_{\alpha\in\Phi,\ 0\le k\le|\Phi|-1}
\]
is a Vandermonde matrix. Since $T$ is consistent, the values $\alpha(T)$ are
pairwise distinct, so $V$ is invertible. Since all $c_\alpha\neq0$, the product
$\diag(c_\alpha)V$ is invertible. Hence, $A_0,\dots,A_{|\Phi|-1}$ form a basis
of $\bigoplus_{\alpha\in\Phi}\g_\alpha$.

Taking brackets between $\g_\alpha$ and $\g_{-\alpha}$ yields the Cartan
subalgebra $\h$. Therefore, the Lie algebra generated by $T$ and $X$ contains
$\h\oplus\bigoplus_{\alpha\in\Phi}\g_\alpha=\g$. This completes the proof of Lemma~\ref{lem:generation-splitting}.
\end{proof}

Lemma~\ref{lem:descent} shows that if a Lie algebra over a field has a set of generators contained in the same type of Lie algebra over a subfield, then the same set serves as a generating set for the Lie algebra over that subfield.
\begin{lemma}[Descent of generation]
\label{lem:descent}
Let $L$ be a finite-dimensional Lie algebra over $\F_q$, let
$L_{\F}=L\otimes_{\F_q}\F$ for a field extension $\F/\F_q$, and let
$x,y\in L$. If $x,y$ generate $L_{\F}$ over $\F$, then they generate $L$ over
$\F_q$.
\end{lemma}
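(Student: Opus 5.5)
Let $M\subseteq L$ be the $\F_q$-Lie subalgebra generated by $x$ and $y$. The plan is to compare $M\otimes_{\F_q}\F$ with the $\F$-subalgebra of $L_{\F}$ generated by $x$ and $y$, and then use a dimension count.

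First, $M$ has an explicit description. It is the $\F_q$-linear span of all iterated brackets (Lie monomials) in $x$ and $y$, since that span contains $x,y$, is closed under brackets by the Jacobi identity, and is contained in every subalgebra containing $x,y$. The same argument over $\F$ shows that the $\F$-subalgebra $M'$ of $L_{\F}$ generated by $x,y$ is the $\F$-span of the same Lie monomials. These monomials are computed in $L\subseteq L_{\F}$, because the bracket on $L_{\F}$ restricts to that of $L$ on $L\otimes 1$. Consequently
\[
M'=\F\cdot M=\operatorname{im}\bigl(M\otimes_{\F_q}\F\to L\otimes_{\F_q}\F\bigr).
\]

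Second, this map is injective because $\F$ is flat over the field $\F_q$; concretely, an $\F_q$-basis of $M$ extends to an $\F_q$-basis of $L$, and this remains $\F$-linearly independent in $L_{\F}$. Hence
\[
\dim_{\F}M'=\dim_{\F_q}M.
\]

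Third, by hypothesis $M'=L_{\F}$, so
\[
\dim_{\F_q}M=\dim_{\F}L_{\F}=\dim_{\F_q}L.
\]
Since $M\subseteq L$ and $L$ is finite-dimensional, $M=L$.

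The only step needing care is the identification of $M'$ with the $\F$-span of $M$. This amounts to noting that generated subalgebras are spans of Lie monomials and that scalar extension commutes with brackets. Everything else is linear algebra over fields. In the application, $x$ is the companion block element $\tilde C$, which lies over $\F_q$ and is conjugate over the splitting field to the consistent diagonal $T$ of Remark~\ref{rem:consistent}, and $y$ is the corresponding $X$.
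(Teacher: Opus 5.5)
Your proof is correct and follows the same route as the paper: you identify the $\F$-subalgebra generated by $x,y$ with $M\otimes_{\F_q}\F$ and compare dimensions, and you also make explicit the injectivity of $M\otimes_{\F_q}\F\to L_{\F}$ (flatness over a field), which the paper leaves implicit. One small slip is in your closing aside: in the application, the second generator is the conjugate $P^{-1}XP$, not $X$ itself.
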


\begin{proof}
Let $M$ be the $\F_q$-Lie subalgebra of $L$ generated by $x$ and $y$, i.e.\ the
$\F_q$-span of all iterated brackets in $x,y$. Then $M\otimes_{\F_q}\F$ is the
$\F$-Lie subalgebra of $L_{\F}$ generated by $x,y$. By hypothesis,
$M\otimes_{\F_q}\F=L_{\F}=L\otimes_{\F_q}\F$. Since $M\subseteq L$ and both
are finite-dimensional over $\F_q$, we get $M=L$. This completes the proof of Lemma~\ref{lem:descent}.
\end{proof}

\subsection{Rationality for $C_n$}
\label{subsec:rational-Cn} We now prove the rationality lemma, which is the key step in proving the main result in the subsequent subsection via the descent technique from the generators of the Lie algebra over $\F_{q^n}$ to $\F_q$. The matrix $P$ is constructed explicitly from the left eigenvectors of $\widetilde{C}_{C_n}$ described above; the proof that this $P$ lies in $\mathrm{Sp}_{2n}(\F)$ and satisfies $P^{-1}X_{C_n}P\in\mathfrak{sp}_{2n}(\F_q)$ is given below.

\begin{lemma}[Rationality for $C_n$]
\label{lem:rational-Cn}
Let $p\neq 2$ and $n\ge2$. Let $f\in\F_q[x]$ be the minimal polynomial of a
normal element $\alpha\in\F_{q^n}$ over $\F_q$, and let $\tilde{C}_{C_n}$ be
the companion block matrix of Lemma~\ref{lem:companion-blocks}. Let
$\F=\F_{q^n}$ be a splitting field for $f$, and let
\[
T=\diag(t_1,\dots,t_n,-t_1,\dots,-t_n),
\]
where $t_i=\Fr^{i-1}(\alpha)$. Let $X_{C_n}\in\mathfrak{sp}_{2n}(\F_q)$ be as
in Subsection~\ref{subsec:root-vectors}. Then there exists
$P\in \Sp_{2n}(\F)$ with $P\tilde{C}_{C_n}P^{-1}=T$ such that
\[
P^{-1}X_{C_n}P\in\mathfrak{sp}_{2n}(\F_q).
\]
\end{lemma}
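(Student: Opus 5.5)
The plan is to build $P$ explicitly from Vandermonde data. Then we check rationality by showing that $P^{-1}X_{C_n}P$ is fixed by the Frobenius $\Fr$ acting entrywise on $M_{2n}(\F)$. Fixed points of $\Fr$ on $\mathfrak{sp}_{2n}(\F)$ are exactly $\mathfrak{sp}_{2n}(\F_q)$, and conjugation by any $P\in\Sp_{2n}(\F)$ preserves $\mathfrak{sp}_{2n}(\F)$. So it suffices to prove
\[
\Fr(P)^{-1}X_{C_n}\Fr(P)=P^{-1}X_{C_n}P,
\]
that is, that $\Fr(P)P^{-1}$ commutes with $X_{C_n}$.

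\textbf{Construction of $P$.} Let $V\in M_n(\F)$ be the matrix whose $i$-th row is the Vandermonde vector $v_i=(1,t_i,\dots,t_i^{n-1})$. By Remark~\ref{rem:consistent} the $t_i$ are pairwise distinct, so $V$ is invertible. As noted before Lemma~\ref{lem:companion-blocks}, $v_iC_f^{\tra}=t_iv_i$, so with $D=\diag(t_1,\dots,t_n)$ we have $VC_f^{\tra}=DV$. Transposing and inverting gives $V^{-\tra}C_f=DV^{-\tra}$. Set
\[
P=\begin{pmatrix} V^{-\tra} & 0\\ 0 & V\end{pmatrix}.
\]
Then
\[
P\tilde C_{C_n}P^{-1}=\diag\bigl(V^{-\tra}C_fV^{\tra},\,-VC_f^{\tra}V^{-1}\bigr)=\diag(D,-D)=T.
\]
For a block-diagonal matrix $\diag(A,B)$, the condition $\diag(A,B)^{\tra}S_{C_n}\diag(A,B)=S_{C_n}$ reduces to $A^{\tra}B=I_n$. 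Here $A^{\tra}B=V^{-1}V=I_n$, so $P\in\Sp_{2n}(\F)$.

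\textbf{Frobenius action.} Since $t_i=\Fr^{i-1}(\alpha)$ and $\alpha\in\F_{q^n}$, we have $\Fr(t_i)=t_{i+1}$ with indices taken mod $n$. Hence $\Fr(V)=wV$, where $w$ is the permutation matrix of the $n$-cycle shifting rows. Because $w$ is a permutation matrix, $w^{-\tra}=w$, so $\Fr(V^{-\tra})=(wV)^{-\tra}=wV^{-\tra}$. Therefore $\Fr(P)=\diag(w,w)P$, and $\Fr(P)P^{-1}=\diag(w,w)$.

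\textbf{Commutation.} From the block description,
\[
X_{C_n}=\begin{pmatrix}\mathbf 1&\mathbf S\\ \mathbf S&-\mathbf 1\end{pmatrix},\qquad \mathbf 1=ee^{\tra}-I,\quad \mathbf S=ee^{\tra}.
\]
Any permutation matrix $w$ satisfies $we=e$ and $e^{\tra}w=e^{\tra}$, so $w$ commutes with $ee^{\tra}$ and with $I$. Hence $\diag(w,w)$ commutes with $X_{C_n}$, which gives the required Frobenius invariance. This is the step I expected to be delicate: the row scaling of $P$ must be chosen so that $\Fr(P)P^{-1}$ is a pure permutation matrix, not a permutation times a nontrivial diagonal matrix. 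A nontrivial diagonal factor would fail to commute with $X_{C_n}$. The Vandermonde normalization, with first entry $1$ in each row, is exactly what makes $\Fr$ permute the rows with no scalar factors, and the block $V^{-\tra}$ is forced by the symplectic condition.

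\textbf{Remark on use.} With the lemma in hand, set $Y=P^{-1}X_{C_n}P\in\mathfrak{sp}_{2n}(\F_q)$. Conjugating by $P$ sends the pair $(\tilde C_{C_n},Y)$ to $(T,X_{C_n})$. Lemma~\ref{lem:generation-splitting} applies to $(T,X_{C_n})$ over $\F$, since $T$ is consistent by Remark~\ref{rem:consistent} and $X_{C_n}$ has a nonzero component in every root space. Lemma~\ref{lem:descent} then shows that $\tilde C_{C_n}$ and $Y$ generate $\mathfrak{sp}_{2n}(\F_q)$.
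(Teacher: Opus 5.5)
Your proof is correct. It uses the same matrix $P=\diag\bigl((Q^{-1})^{\tra},Q\bigr)$ as the paper (your $V$ is the paper's $Q$), with the same checks that $P\tilde C_{C_n}P^{-1}=T$ and $P\in\Sp_{2n}(\F)$. The difference is how you show that $P^{-1}X_{C_n}P$ is rational.

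The paper computes the four blocks directly. Since $\mathbf 1$ and $\mathbf S$ are $ee^{\tra}$ up to $\pm I$, each block of $P^{-1}X_{C_n}P$ is $0$ or $\pm I$ plus an outer product built from the vectors $Q^{-1}e=e_1$ and $e^{\tra}Q=(\Tr(\alpha^{j-1}))_j$ (and their transposes). Both vectors visibly lie in $\F_q^n$.

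You argue by Galois descent instead. You show $\Fr(P)=\diag(w,w)P$ with $w$ the cyclic shift, and that $\diag(w,w)$ centralizes $X_{C_n}$ because $we=e$ and $e^{\tra}w=e^{\tra}$. Hence $P^{-1}X_{C_n}P$ is fixed by $\Fr$.

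This is essentially the argument the paper reserves for types $B_n$ and $D_n$ (Lemma~\ref{lem:frob-blocks}), and it is the more robust of the two. It applies verbatim to any $X$ whose $n\times n$ blocks commute with $w$, such as the circulant blocks $\mathbf B$, where the rank-one trick no longer works. It also handles the extra row and column in type $B_n$ via $\diag(w,1,w)$, so it treats all three types uniformly.

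The paper's computation, in exchange, gives the second generator explicitly. For example, its lower-left block is $e_1e_1^{\tra}$ and its upper-right block is $v^{\tra}v$, where $v=e^{\tra}Q$ is the trace vector.
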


\begin{proof}
Set
\[
Q=\begin{pmatrix}u_1\\ \vdots\\ u_n\end{pmatrix},
\qquad
u_i=(1,t_i,t_i^2,\dots,t_i^{n-1}),\qquad i=1,\dots,n,
\]
so that $Q_{ik}=\Fr^{i-1}(\alpha^{k-1})$. Since the $t_i$ are distinct, $Q$ is
invertible.

Define
\[
P=\begin{pmatrix}(Q^{-1})^{\tra}&0\\0&Q\end{pmatrix}\in GL_{2n}(\F).
\]
Since $f(t_i)=0$, the Vandermonde vector $u_i^{\tra}$ is a right eigenvector of
$C_f$ with eigenvalue $t_i$:
\[
C_fu_i^{\tra}=t_iu_i^{\tra}.
\]
Transposing gives $u_iC_f^{\tra}=t_iu_i$. Writing these equations for all $i$
in matrix form yields
\[
QC_f^{\tra}=DQ,\qquad D=\diag(t_1,\dots,t_n).
\]
Multiplying on the left by $Q^{-1}$ gives
\[
C_f^{\tra}Q^{-1}=Q^{-1}D.
\]
Taking transposes and using $D^{\tra}=D$, we obtain
\[
(Q^{-1})^{\tra}C_f=D(Q^{-1})^{\tra}.
\]
Thus, the rows of $(Q^{-1})^{\tra}$ are left eigenvectors of $C_f$ with
eigenvalues $t_1,\dots,t_n$. On the other hand,
\[
u_i(-C_f^{\tra})=-t_iu_i,
\]
so the rows of $Q$ are left eigenvectors of $-C_f^{\tra}$ with eigenvalues
$-t_1,\dots,-t_n$. The left eigenvectors of
\[
\widetilde{C}_n=
\begin{pmatrix}
C_f & 0\\
0 & -C_f^{\mathsf T}
\end{pmatrix}
\]
are therefore
\[
(\ell_i,0)\quad\text{for the eigenvalue }t_i,
\qquad
(0,u_i)\quad\text{for the eigenvalue }-t_i,
\]
where \(\ell_i\) is the \(i\)-th row of \((Q^{-1})^{\mathsf T}\) and \(u_i\) is the \(i\)-th row of \(Q\). Hence, we get
\[
P=
\begin{pmatrix}
(Q^{-1})^{\mathsf T} & 0\\
0 & Q
\end{pmatrix}.
\]
Thus, with the convention that the rows of \(P\) are left eigenvectors,
\[
P\widetilde{C}_nP^{-1}
=
\operatorname{diag}(t_1,\dots,t_n,-t_1,\dots,-t_n).
\]
Therefore, we have
\[
P\tilde{C}_{C_n}P^{-1}
=
\begin{pmatrix}(Q^{-1})^{\tra}&0\\0&Q\end{pmatrix}
\begin{pmatrix}C_f&0\\0&-C_f^{\tra}\end{pmatrix}
\begin{pmatrix}Q^{\tra}&0\\0&Q^{-1}\end{pmatrix}
=
\begin{pmatrix}D&0\\0&-D\end{pmatrix}
=T.
\]
Using $Q^{\tra}(Q^{-1})^{\tra}=I$ and $Q^{-1}Q=I$, we compute
\[
P^{\tra}S_{C_n}P
=
\begin{pmatrix}Q^{-1}&0\\0&Q^{\tra}\end{pmatrix}
\begin{pmatrix}0&I\\-I&0\end{pmatrix}
\begin{pmatrix}(Q^{-1})^{\tra}&0\\0&Q\end{pmatrix}
=
\begin{pmatrix}0&I\\-I&0\end{pmatrix}
=S_{C_n},
\]
so $P\in\Sp_{2n}(\F)$.

By definition of $X_{C_n}$,
\[
X_{C_n}=\begin{pmatrix}\mathbf 1&\mathbf S\\ \mathbf S&-\mathbf 1\end{pmatrix},
\qquad
\mathbf 1=ee^{\tra}-I,\quad \mathbf S=ee^{\tra},
\]
where $e=(1,\dots,1)^{\tra}\in\F^n$. Since
$P^{-1}=\begin{pmatrix}Q^{\tra}&0\\0&Q^{-1}\end{pmatrix}$, we obtain
\[
P^{-1}X_{C_n}P
=
\begin{pmatrix}
Q^{\tra}\mathbf 1(Q^{-1})^{\tra}&Q^{\tra}\mathbf SQ\\
Q^{-1}\mathbf S(Q^{-1})^{\tra}&-Q^{-1}\mathbf 1Q
\end{pmatrix}.
\]
Substituting $\mathbf 1=ee^{\tra}-I$ and $\mathbf S=ee^{\tra}$, and using
$Q^{\tra}(Q^{-1})^{\tra}=I$ and $Q^{-1}Q=I$, the four blocks become
\[
Q^{\tra}\mathbf 1(Q^{-1})^{\tra}
=(Q^{\tra}e)(e^{\tra}(Q^{-1})^{\tra})-I,
\]
\[
Q^{\tra}\mathbf SQ=(Q^{\tra}e)(e^{\tra}Q),
\]
\[
Q^{-1}\mathbf S(Q^{-1})^{\tra}
=(Q^{-1}e)(e^{\tra}(Q^{-1})^{\tra})=(Q^{-1}e)(Q^{-1}e)^{\tra},
\]
\[
-Q^{-1}\mathbf 1Q=-(Q^{-1}e)(e^{\tra}Q)+I.
\]
Thus, every block is a product of the vectors
\[
u=Q^{-1}e,\qquad v=e^{\tra}Q,
\]
together with their transposes and the identity matrix.

We now show that $u,v\in\F_q^n$. The first column of $Q$ is
\[
(\Fr^{i-1}(\alpha^0))_{i=1}^n=(1,\dots,1)^{\tra}=e.
\]
Hence, we get $Qe_1=e$, where $e_1=(1,0,\dots,0)^{\tra}$. Since $Q$ is invertible,
\[
Q^{-1}e=e_1\in\F_q^n,
\]
so $u=e_1\in\F_q^n$ and $u^{\tra}=e_1^{\tra}\in\F_q^n$.\\

Next, the $j$-th entry of $v=e^{\tra}Q$ is
\[
v_j=\sum_{i=1}^n Q_{ij}
=\sum_{i=1}^n \Fr^{i-1}(\alpha^{j-1})
=\Tr(\alpha^{j-1})\in\F_q.
\]
Therefore $v\in\F_q^n$, and hence $v^{\tra}=Q^{\tra}e\in\F_q^n$.

Consequently every block of $P^{-1}X_{C_n}P$ has all entries in $\F_q$, so
\[
P^{-1}X_{C_n}P\in M_{2n}(\F_q).
\]
Since $P\in\Sp_{2n}(\F)$ and $X_{C_n}\in\mathfrak{sp}_{2n}(\F)$, the identity
$P^{\tra}S_{C_n}P=S_{C_n}$ gives
\[
(P^{-1}X_{C_n}P)^{\tra}S_{C_n}+S_{C_n}(P^{-1}X_{C_n}P)
=
P^{\tra}(X_{C_n}^{\tra}S_{C_n}+S_{C_n}X_{C_n})P=0.
\]
Combining this with $P^{-1}X_{C_n}P\in M_{2n}(\F_q)$,
\[
P^{-1}X_{C_n}P\in\mathfrak{sp}_{2n}(\F)\cap M_{2n}(\F_q)
=\mathfrak{sp}_{2n}(\F_q).
\]
This completes the proof of Lemma~\ref{lem:rational-Cn}.
\end{proof}

\subsection{Rationality for $B_n$ and $D_n$}
\label{subsec:rational-Bn-Dn} 

Throughout this subsection assume $p\neq2$. Let $\alpha\in\F_{q^n}$ be a normal
element over $\F_q$, put $t_i=\Fr^{i-1}(\alpha)$, and let
\[
Q=(Q_{ik}),\qquad Q_{ik}=\Fr^{i-1}(\alpha^{k-1}),\qquad 1\le i,k\le n,
\]
be the associated Vandermonde matrix; $Q$ is invertible. Let
$\mathbf B=(b_{ij})\in M_n(\F_q)$ be skew-symmetric and circulant, with
$b_{ii}=0$ and $b_{ji}=-b_{ij}$. Let $X_{D_n}(\mathbf B)$ and
$X_{B_n}(\mathbf B)$ be as in Subsection~\ref{subsec:root-vectors}. Set
\[
P_{D_n}=
\begin{pmatrix}
(Q^{-1})^{\tra}&0\\
0&Q
\end{pmatrix},
\qquad
P_{B_n}=
\begin{pmatrix}
(Q^{-1})^{\tra}&0&0\\
0&1&0\\
0&0&Q
\end{pmatrix}.
\]
From $QC_f^{\tra}=DQ$ with $D=\diag(t_1,\dots,t_n)$, we get
\[
P_{D_n}\tilde C_{D_n}P_{D_n}^{-1}
=\diag(t_1,\dots,t_n,-t_1,\dots,-t_n),
\]
\[
P_{B_n}\tilde C_{B_n}P_{B_n}^{-1}
=\diag(t_1,\dots,t_n,0,-t_1,\dots,-t_n).
\]
Using $Q^{\tra}(Q^{-1})^{\tra}=I$ and $Q^{-1}Q=I$, one obtains
\[
P_{D_n}^{\tra}S_{D_n}P_{D_n}=S_{D_n},
\qquad
P_{B_n}^{\tra}S_{B_n}P_{B_n}=S_{B_n},
\]
so $P_{D_n}\in O_{2n}(\F_{q^n})$ and
$P_{B_n}\in O_{2n+1}(\F_{q^n})$.

\begin{lemma}[Frobenius invariance of the conjugated blocks]
\label{lem:frob-blocks}
Let $\alpha\in\F_{q^n}$ be normal over $\F_q$, let
$Q_{ik}=\Fr^{i-1}(\alpha^{k-1})$, and let $\mathbf B\in M_n(\F_q)$ be
circulant and skew-symmetric. Then
\[
Q^{\tra}\mathbf BQ\in M_n(\F_q),
\qquad
Q^{-1}\mathbf B(Q^{-1})^{\tra}\in M_n(\F_q),
\]
and
\[
Q^{\tra}e,\quad e^{\tra}Q,\quad Q^{-1}e,\quad e^{\tra}(Q^{-1})^{\tra}
\in\F_q^n.
\]
Consequently
\[
Q^{\tra}\mathbf 1(Q^{-1})^{\tra},\quad
Q^{-1}\mathbf 1Q,\quad
Q^{-1}\mathbf 1(Q^{-1})^{\tra},\quad
Q^{\tra}\mathbf 1Q
\in M_n(\F_q),
\]
where $\mathbf 1=ee^{\tra}-I$.
\end{lemma}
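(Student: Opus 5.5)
The plan is to use Galois descent: an element of $\F_{q^n}$ lies in $\F_q$ if and only if it is fixed by $\Fr$. So it suffices to show that each matrix or vector in the statement is fixed by $\Fr$ applied entrywise. The key observation is how $\Fr$ acts on $Q$. Since $Q_{ik}=\Fr^{i-1}(\alpha^{k-1})$ and $\Fr^n=\mathrm{id}$ on $\F_{q^n}$, applying $\Fr$ entrywise gives $\Fr(Q)_{ik}=\Fr^{i}(\alpha^{k-1})=Q_{i+1,k}$, with indices taken mod $n$. Hence
\[
\Fr(Q)=\Pi Q,
\]
where $\Pi$ is the cyclic permutation matrix with $\Pi_{i,i+1}=1$ (indices mod $n$). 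Because $\Fr$ is a ring automorphism acting entrywise, it commutes with products, transposes and inverses. Using $\Pi^{-1}=\Pi^{\tra}$, we then get $\Fr(Q^{\tra})=Q^{\tra}\Pi^{\tra}$, $\Fr(Q^{-1})=Q^{-1}\Pi^{\tra}$ and $\Fr((Q^{-1})^{\tra})=\Pi(Q^{-1})^{\tra}$.

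Next I will use that $\mathbf B$ is circulant. Circulant matrices are exactly those commuting with $\Pi$, so $\Pi^{\tra}\mathbf B\Pi=\mathbf B$ and $\Pi\mathbf B\Pi^{\tra}=\mathbf B$. Since $\mathbf B$ has entries in $\F_q$, this gives
\[
\Fr(Q^{\tra}\mathbf BQ)=Q^{\tra}\Pi^{\tra}\mathbf B\Pi Q=Q^{\tra}\mathbf BQ,
\qquad
\Fr(Q^{-1}\mathbf B(Q^{-1})^{\tra})=Q^{-1}\Pi^{\tra}\mathbf B\Pi(Q^{-1})^{\tra}=Q^{-1}\mathbf B(Q^{-1})^{\tra}.
\]
Both matrices are therefore in $M_n(\F_q)$. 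Skew-symmetry of $\mathbf B$ is not needed for this step.

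For the four vectors, I will use $\Pi e=\Pi^{\tra}e=e$. For example, $\Fr(Q^{\tra}e)=Q^{\tra}\Pi^{\tra}e=Q^{\tra}e$, and $\Fr(Q^{-1}e)=Q^{-1}\Pi^{\tra}e=Q^{-1}e$; the latter also follows directly from $Q^{-1}e=e_1$, as in the proof of Lemma~\ref{lem:rational-Cn}. The vectors $e^{\tra}Q$ and $e^{\tra}(Q^{-1})^{\tra}$ are transposes of these two, so they also lie in $\F_q^n$. Alternatively, $e^{\tra}Q$ is the vector of traces $(\Tr(\alpha^{k-1}))_k$ computed earlier.

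For the consequences, I will substitute $\mathbf 1=ee^{\tra}-I$ into each product:
\[
Q^{\tra}\mathbf 1(Q^{-1})^{\tra}=(Q^{\tra}e)(e^{\tra}(Q^{-1})^{\tra})-I,
\qquad
Q^{-1}\mathbf 1Q=(Q^{-1}e)(e^{\tra}Q)-I.
\]
Both are in $M_n(\F_q)$ by the vector statement. The other two products, $Q^{-1}\mathbf 1(Q^{-1})^{\tra}$ and $Q^{\tra}\mathbf 1Q$, produce the extra terms $(Q^{\tra}Q)^{-1}$ and $Q^{\tra}Q$. These are handled by the same trick: $\Fr(Q^{\tra}Q)=Q^{\tra}\Pi^{\tra}\Pi Q=Q^{\tra}Q$.

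The only point that needs care is the bookkeeping of the index shift. One must check that $\Fr$ acts by a permutation on the left of $Q$, not on the right, and that the direction of the shift does not matter because circulants commute with both $\Pi$ and $\Pi^{\tra}$. Once $\Fr(Q)=\Pi Q$ is established, everything else is formal.
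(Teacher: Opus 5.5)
Your proposal is correct and follows essentially the same route as the paper. Both use Galois descent via the entrywise Frobenius, the identity $Q^{\Fr}=RQ$ with $R$ the cyclic shift, and the fact that a circulant $\mathbf B$ commutes with $R$. Your treatment of the last two blocks is actually more careful than the paper's. The paper claims every block involving $\mathbf 1$ is an outer product plus $\pm I$, but, as you note, $Q^{-1}\mathbf 1(Q^{-1})^{\tra}$ and $Q^{\tra}\mathbf 1Q$ carry the extra terms $-(Q^{\tra}Q)^{-1}$ and $-Q^{\tra}Q$; these are Frobenius-fixed because $(Q^{\tra}Q)^{\Fr}=Q^{\tra}R^{-1}RQ=Q^{\tra}Q$.
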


\begin{proof}
A matrix over $\F_{q^n}$ lies in $M_n(\F_q)$ if and only if it is fixed by the
entrywise Frobenius map $M\mapsto M^{\Fr}$.

Since $Q_{ik}=\Fr^{i-1}(\alpha^{k-1})$,
\[
(Q^{\Fr})_{ik}=(Q_{ik})^q=\Fr^{i}(\alpha^{k-1})=Q_{i+1,k},
\]
indices modulo $n$, so $Q^{\Fr}=RQ$ with $R$ the cyclic shift. Hence
\[
(Q^{-1})^{\Fr}=Q^{-1}R^{-1},
\qquad
\bigl((Q^{-1})^{\tra}\bigr)^{\Fr}=R(Q^{-1})^{\tra},
\qquad
(Q^{\tra})^{\Fr}=Q^{\tra}R^{-1},
\]
using $R^{\tra}=R^{-1}$.

Since $\mathbf B\in M_n(\F_q)$ is circulant, it is a polynomial in $R$. Thus
$\mathbf B^{\Fr}=\mathbf B$ and $R^{-1}\mathbf B R=\mathbf B$. Therefore
\[
(Q^{\tra}\mathbf BQ)^{\Fr}
=(Q^{\tra})^{\Fr}\mathbf B^{\Fr}Q^{\Fr}
=Q^{\tra}R^{-1}\mathbf B\,RQ
=Q^{\tra}\mathbf BQ.
\]
With $S=Q^{-1}$, so $S^{\Fr}=SR^{-1}$ and
$(S^{\tra})^{\Fr}=RS^{\tra}$,
\[
(S\mathbf B S^{\tra})^{\Fr}
=S^{\Fr}\mathbf B^{\Fr}(S^{\tra})^{\Fr}
=SR^{-1}\mathbf B\,RS^{\tra}
=S\mathbf B S^{\tra}.
\]
Both blocks are fixed by $\Fr$, hence lie in $M_n(\F_q)$.

The first column of $Q$ is $e$, so $Qe_1=e$ and
$Q^{-1}e=e_1\in\F_q^n$; transposing gives
$e^{\tra}(Q^{-1})^{\tra}=e_1^{\tra}\in\F_q^n$. Also
\[
(e^{\tra}Q)_j=\sum_{i=1}^{n}\Fr^{i-1}(\alpha^{j-1})
=\Tr(\alpha^{j-1})\in\F_q,
\]
so $e^{\tra}Q\in\F_q^n$ and $Q^{\tra}e=(e^{\tra}Q)^{\tra}\in\F_q^n$.
Finally $\mathbf 1=ee^{\tra}-I$, so each block involving $\mathbf 1$ is an
outer product of two of these vectors, plus $\pm I$, hence lies in
$M_n(\F_q)$.
\end{proof}

\begin{lemma}[Rationality for $B_n$ and $D_n$]
\label{lem:rational-BD}
Let $n\ge2$ for $B_n$ and $n\ge3$ for $D_n$. Let $f\in\F_q[x]$ be the minimal
polynomial of a normal element $\alpha\in\F_{q^n}$ over $\F_q$, let
$\tilde C_{B_n}$ and $\tilde C_{D_n}$ be the companion block matrices of
Lemma~\ref{lem:companion-blocks}, and let
\[
T_{D_n}=\diag(t_1,\dots,t_n,-t_1,\dots,-t_n),
\qquad
T_{B_n}=\diag(t_1,\dots,t_n,0,-t_1,\dots,-t_n),
\]
where $t_i=\Fr^{i-1}(\alpha)$. Let $\mathbf B\in M_n(\F_q)$ be circulant and
skew-symmetric, and let $X_{D_n}(\mathbf B)$ and $X_{B_n}(\mathbf B)$ be as
above. Then there exist $P_{D_n}\in O_{2n}(\F_{q^n})$ and
$P_{B_n}\in O_{2n+1}(\F_{q^n})$ such that
\[
P_{D_n}\tilde C_{D_n}P_{D_n}^{-1}=T_{D_n},
\qquad
P_{B_n}\tilde C_{B_n}P_{B_n}^{-1}=T_{B_n},
\]
and
\[
P_{D_n}^{-1}X_{D_n}(\mathbf B)P_{D_n}\in\mathfrak{so}_{2n}(\F_q),
\qquad
P_{B_n}^{-1}X_{B_n}(\mathbf B)P_{B_n}\in\mathfrak{so}_{2n+1}(\F_q).
\]
\end{lemma}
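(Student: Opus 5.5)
We take $P_{D_n}$ and $P_{B_n}$ to be the block matrices defined at the start of this subsection. The plan is to follow the proof of Lemma~\ref{lem:rational-Cn} almost verbatim, with Lemma~\ref{lem:frob-blocks} handling the new off-diagonal blocks $\mathbf B$.

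\textbf{Step 1: diagonalization and orthogonality.} The diagonalization identities $P\tilde C P^{-1}=T$ follow exactly as in the $C_n$ case from $QC_f^{\tra}=DQ$, which gives $(Q^{-1})^{\tra}C_f=D(Q^{-1})^{\tra}$. In type $B_n$ the middle $1\times1$ block is unaffected, since the middle block of $\tilde C_{B_n}$ is $0$. The orthogonality $P^{\tra}SP=S$ is the same block computation as before, using $Q^{\tra}(Q^{-1})^{\tra}=I$ and $Q^{-1}Q=I$. Both facts are recorded just before Lemma~\ref{lem:frob-blocks}, so here I would only indicate the computation.

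\textbf{Step 2: the $D_n$ case.} Since $P_{D_n}^{-1}=\diag(Q^{\tra},Q^{-1})$, the conjugate is
\[
P_{D_n}^{-1}X_{D_n}(\mathbf B)P_{D_n}=
\begin{pmatrix}
Q^{\tra}\mathbf 1(Q^{-1})^{\tra} & Q^{\tra}\mathbf B Q\\
Q^{-1}\mathbf B(Q^{-1})^{\tra} & -Q^{-1}\mathbf 1 Q
\end{pmatrix}.
\]
Each of the four blocks lies in $M_n(\F_q)$ by Lemma~\ref{lem:frob-blocks}.

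\textbf{Step 3: the $B_n$ case.} Here $P_{B_n}^{-1}=\diag(Q^{\tra},1,Q^{-1})$. The $(1,1)$, $(1,3)$, $(3,1)$ and $(3,3)$ blocks are the same as in the $D_n$ case. The additional blocks are:
\begin{itemize}
\item $Q^{\tra}e$ in position $(1,2)$ and $Q^{-1}e$ in position $(3,2)$;
\item $-e^{\tra}(Q^{-1})^{\tra}$ and $-e^{\tra}Q$ in the middle row;
\item $0$ in the centre.
\end{itemize}
All of these vectors are rational by Lemma~\ref{lem:frob-blocks}.

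\textbf{Step 4: landing in the orthogonal Lie algebra.} Having shown the conjugate has entries in $\F_q$, I would use $P^{\tra}SP=S$, as at the end of the proof of Lemma~\ref{lem:rational-Cn}, to get
\[
(P^{-1}XP)^{\tra}S+S(P^{-1}XP)=P^{\tra}(X^{\tra}S+SX)P=0.
\]
Hence $P^{-1}XP\in\mathfrak{so}_N(\F_{q^n})\cap M_N(\F_q)=\mathfrak{so}_N(\F_q)$.

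\textbf{Main obstacle.} The only nontrivial point is the rationality of $Q^{\tra}\mathbf BQ$ and $Q^{-1}\mathbf B(Q^{-1})^{\tra}$. This is exactly where the circulant hypothesis on $\mathbf B$ is needed, to make $\mathbf B$ commute with the cyclic shift $R$. Since Lemma~\ref{lem:frob-blocks} already supplies this, the remaining work is careful block bookkeeping: in particular, the signs in the middle row and column of $X_{B_n}(\mathbf B)$, and checking that $P^{-1}$ has the stated block form.
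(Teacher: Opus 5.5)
Your proposal is correct and follows the paper's proof essentially step for step. Both use the same block-diagonal $P$, then check rationality block by block via Lemma~\ref{lem:frob-blocks} (where circulance of $\mathbf B$ handles the $Q^{\tra}\mathbf BQ$ and $Q^{-1}\mathbf B(Q^{-1})^{\tra}$ blocks), and finally use $P^{\tra}SP=S$ to place the conjugate in $\mathfrak{so}_N(\F_{q^n})\cap M_N(\F_q)=\mathfrak{so}_N(\F_q)$.
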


\begin{proof}
Take $P_{D_n}$ and $P_{B_n}$ as above. Block multiplication gives
\[
P_{D_n}^{-1}X_{D_n}(\mathbf B)P_{D_n}
=
\begin{pmatrix}
Q^{\tra}\mathbf 1(Q^{-1})^{\tra}&Q^{\tra}\mathbf BQ\\
Q^{-1}\mathbf B(Q^{-1})^{\tra}&-Q^{-1}\mathbf 1Q
\end{pmatrix},
\]
and
\[
P_{B_n}^{-1}X_{B_n}(\mathbf B)P_{B_n}
=
\begin{pmatrix}
Q^{\tra}\mathbf 1(Q^{-1})^{\tra}&Q^{\tra}e&Q^{\tra}\mathbf BQ\\
-e^{\tra}(Q^{-1})^{\tra}&0&-e^{\tra}Q\\
Q^{-1}\mathbf B(Q^{-1})^{\tra}&Q^{-1}e&-Q^{-1}\mathbf 1Q
\end{pmatrix}.
\]
By Lemma~\ref{lem:frob-blocks}, every block lies in $M_n(\F_q)$ or $\F_q^n$.
Hence both conjugated matrices lie in $M_{2n}(\F_q)$ and $M_{2n+1}(\F_q)$,
respectively. Since the conjugating matrices are orthogonal over $\F_{q^n}$
and $X_{D_n}(\mathbf B),X_{B_n}(\mathbf B)$ are skew, the identities
$P^{\tra}SP=S$ give
\[
(P^{-1}XP)^{\tra}S+S(P^{-1}XP)=0.
\]
Therefore, the conjugated matrices lie in $\mathfrak{so}_{2n}(\F_q)$ and
$\mathfrak{so}_{2n+1}(\F_q)$.
\end{proof}

\subsubsection{The odd-rank case}
\label{sec:odd-BD}

Assume $n\ge3$ is odd and put $m=(n-1)/2$. For $1\le i<j\le n$, set
\[
b_{ij}=
\begin{cases}
+1,&1\le j-i\le m,\\[1mm]
-1,&m<j-i\le n-1,
\end{cases}
\qquad
b_{ji}:=-b_{ij},\qquad b_{ii}:=0.
\]
Then
\[
\mathbf B_{\mathrm{odd}}
=\sum_{1\le i<j\le n}b_{ij}(F_{ij}-F_{ji})
=\sum_{k=1}^{m}(R^k-R^{-k})
\in M_n(\F_q)
\]
is circulant and skew-symmetric, and all off-diagonal entries are nonzero.
Hence $X_{D_n}(\mathbf B_{\mathrm{odd}})$ and
$X_{B_n}(\mathbf B_{\mathrm{odd}})$ have nonzero component in every root space.

\begin{theorem}[Two-generation for odd $B_n$ and $D_n$]
\label{thm:two-gen-odd}
Let $p\neq2$ and let $n\ge3$ be odd. Choose
$P_{D_n}\in O_{2n}(\F_{q^n})$ and
$P_{B_n}\in O_{2n+1}(\F_{q^n})$ as in Lemma~\ref{lem:rational-BD}, applied to
$\mathbf B=\mathbf B_{\mathrm{odd}}$. Then
\[
\bigl\langle \tilde C_{D_n},
P_{D_n}^{-1}X_{D_n}(\mathbf B_{\mathrm{odd}})P_{D_n}
\bigr\rangle_{\F_q}
=
\mathfrak{so}_{2n}(\F_q),
\]
and
\[
\bigl\langle \tilde C_{B_n},
P_{B_n}^{-1}X_{B_n}(\mathbf B_{\mathrm{odd}})P_{B_n}
\bigr\rangle_{\F_q}
=
\mathfrak{so}_{2n+1}(\F_q).
\]
\end{theorem}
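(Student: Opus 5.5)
We argue exactly as one would for $C_n$: pass to the splitting field, diagonalize $\tilde C$, invoke Lemma~\ref{lem:generation-splitting}, and descend with Lemma~\ref{lem:descent}. Throughout, $\F=\F_{q^n}$ is a splitting field for $f$, and $L$ denotes $\mathfrak{so}_{2n}(\F_q)$ or $\mathfrak{so}_{2n+1}(\F_q)$. By Lemma~\ref{lem:rational-BD}, the element $Y:=P^{-1}X(\mathbf B_{\mathrm{odd}})P$ lies in $L$, and $\tilde C\in L$ by Lemma~\ref{lem:companion-blocks}. So both proposed generators lie in $L$, and by Lemma~\ref{lem:descent} it suffices to show that they generate $L\otimes_{\F_q}\F$ over $\F$.

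Conjugation by $P$ is a Lie algebra automorphism of $L\otimes\F$, because $P$ preserves the form $S$ by Lemma~\ref{lem:rational-BD}. Moreover $P\tilde C P^{-1}=T$ and $PYP^{-1}=X(\mathbf B_{\mathrm{odd}})$. Hence $\langle \tilde C, Y\rangle_\F=P^{-1}\langle T, X(\mathbf B_{\mathrm{odd}})\rangle_\F P$, and it is enough to show that $T$ and $X(\mathbf B_{\mathrm{odd}})$ generate $L\otimes\F$. This is exactly the setting of Lemma~\ref{lem:generation-splitting}, applied to $\h$ and the root decomposition described above.

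We check the two hypotheses of that lemma.

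\emph{$T$ is consistent.} By Remark~\ref{rem:consistent}, the $t_i=\Fr^{i-1}(\alpha)$ are linearly independent over $\F_q$, since $\alpha$ is normal. Each root value $\alpha(T)$ is one of $\pm t_i$ (type $B_n$ only) or $\pm t_i\pm t_j$ with $i\ne j$.
\begin{itemize}
\item A value $\alpha(T)=0$ would be a nontrivial linear relation among the $t_i$.
\item An equality $\alpha(T)=\beta(T)$ with $\alpha\ne\beta$ gives $\sum c_k t_k=0$ with integer coefficients $c_k\in\{0,\pm1,\pm2\}$, not all zero. Since $p\neq2$, these coefficients remain not all zero in $\F_q$, which contradicts linear independence.
\end{itemize}
A short case check confirms that distinct roots of $B_n$ or $D_n$ always produce distinct coefficient vectors in $\mathbb Z^n$ whose difference has entries at most $2$ in absolute value. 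This is the only step needing care. The reason a coefficient $2$ can arise, for instance from $(\varepsilon_i+\varepsilon_j)-(-\varepsilon_i+\varepsilon_j)$, is precisely why $p\ne2$ is required.

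\emph{$X(\mathbf B_{\mathrm{odd}})$ has a nonzero component in every root space.} This was already observed after the definition of $\mathbf B_{\mathrm{odd}}$.
\begin{itemize}
\item The roots $\varepsilon_i-\varepsilon_j$ have coefficient $1$.
\item The roots $\pm(\varepsilon_i+\varepsilon_j)$ have coefficient $b_{ij}=\pm1\ne0$, because $n$ odd makes every off-diagonal entry of $\mathbf B_{\mathrm{odd}}=\sum_{k=1}^m(R^k-R^{-k})$ equal to $\pm1$. Each residue $j-i\not\equiv0\pmod n$ lies in exactly one of $\{1,\dots,m\}$ or $\{-1,\dots,-m\}$.
\item For $B_n$, the roots $\pm\varepsilon_i$ have coefficient $1$.
\end{itemize}

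Lemma~\ref{lem:generation-splitting} therefore gives $\langle T,X(\mathbf B_{\mathrm{odd}})\rangle_\F=L\otimes\F$. Conjugating back by $P$ and applying Lemma~\ref{lem:descent} completes the argument.

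The main obstacle is the rigor of the consistency check, i.e.\ confirming that no two distinct roots give the same $\F_q$-combination of the $t_i$. The remaining ingredient, the rationality of $Y$, is already supplied by Lemma~\ref{lem:rational-BD}.
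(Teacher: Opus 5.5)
Your proposal is correct and follows essentially the same route as the paper. You conjugate by $P$ to reduce to the pair $(T, X(\mathbf B_{\mathrm{odd}}))$ over $\F_{q^n}$, apply Lemma~\ref{lem:generation-splitting} using the consistency of $T$ (Remark~\ref{rem:consistent}) and the nonvanishing of all off-diagonal entries of $\mathbf B_{\mathrm{odd}}$, and descend via Lemma~\ref{lem:descent}; your explicit check that differences of root coefficient vectors have entries in $\{0,\pm1,\pm2\}$, and so stay nonzero modulo $p\neq 2$, just spells out what that remark asserts.
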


\begin{proof}
Put $\F=\F_{q^n}$. Let
\[
T_{D_n}=\diag(t_1,\dots,t_n,-t_1,\dots,-t_n),
\qquad
T_{B_n}=\diag(t_1,\dots,t_n,0,-t_1,\dots,-t_n).
\]
By Lemma~\ref{lem:rational-BD}, after replacing $t_i$ by conjugates if
necessary, we may assume
\[
P_{D_n}\tilde C_{D_n}P_{D_n}^{-1}=T_{D_n},
\qquad
P_{B_n}\tilde C_{B_n}P_{B_n}^{-1}=T_{B_n},
\]
and
\[
Y_{D_n}:=P_{D_n}^{-1}X_{D_n}(\mathbf B_{\mathrm{odd}})P_{D_n}
\in\mathfrak{so}_{2n}(\F_q),
\]
\[
Y_{B_n}:=P_{B_n}^{-1}X_{B_n}(\mathbf B_{\mathrm{odd}})P_{B_n}
\in\mathfrak{so}_{2n+1}(\F_q).
\]
By Remark~\ref{rem:consistent}, $T_{D_n}$ and $T_{B_n}$ are consistent.
Since all off-diagonal entries of $\mathbf B_{\mathrm{odd}}$ are nonzero,
$X_{D_n}(\mathbf B_{\mathrm{odd}})$ and
$X_{B_n}(\mathbf B_{\mathrm{odd}})$ have nonzero component in every root
space relative to $T_{D_n}$ and $T_{B_n}$, respectively. Hence
Lemma~\ref{lem:generation-splitting} gives
\[
\bigl\langle T_{D_n},X_{D_n}(\mathbf B_{\mathrm{odd}})\bigr\rangle_{\F}
=
\mathfrak{so}_{2n}(\F),
\qquad
\bigl\langle T_{B_n},X_{B_n}(\mathbf B_{\mathrm{odd}})\bigr\rangle_{\F}
=
\mathfrak{so}_{2n+1}(\F).
\]
Conjugation by $P_{D_n}^{-1}$ (respectively $P_{B_n}^{-1}$) is a Lie
algebra automorphism of the full matrix algebra which preserves
$\mathfrak{so}_{2n}(\F)$ (respectively $\mathfrak{so}_{2n+1}(\F)$), because
$P_{D_n}^{\tra}S_{D_n}P_{D_n}=S_{D_n}$ and
$P_{B_n}^{\tra}S_{B_n}P_{B_n}=S_{B_n}$. It sends
\[
T_{D_n}=P_{D_n}\tilde C_{D_n}P_{D_n}^{-1}
\longmapsto \tilde C_{D_n},
\qquad
X_{D_n}(\mathbf B_{\mathrm{odd}})\longmapsto Y_{D_n},
\]
and
\[
T_{B_n}=P_{B_n}\tilde C_{B_n}P_{B_n}^{-1}
\longmapsto \tilde C_{B_n},
\qquad
X_{B_n}(\mathbf B_{\mathrm{odd}})\longmapsto Y_{B_n}.
\]
Therefore
\[
\bigl\langle \tilde C_{D_n},Y_{D_n}\bigr\rangle_{\F}
=
\mathfrak{so}_{2n}(\F),
\qquad
\bigl\langle \tilde C_{B_n},Y_{B_n}\bigr\rangle_{\F}
=
\mathfrak{so}_{2n+1}(\F).
\]
The elements $\tilde C_{D_n},Y_{D_n}$ lie in
$\mathfrak{so}_{2n}(\F_q)$, and
$\tilde C_{B_n},Y_{B_n}$ lie in $\mathfrak{so}_{2n+1}(\F_q)$. By
Lemma~\ref{lem:descent}, generation over $\F$ implies generation over
$\F_q$. Hence
\[
\bigl\langle \tilde C_{D_n},Y_{D_n}\bigr\rangle_{\F_q}
=
\mathfrak{so}_{2n}(\F_q),
\qquad
\bigl\langle \tilde C_{B_n},Y_{B_n}\bigr\rangle_{\F_q}
=
\mathfrak{so}_{2n+1}(\F_q),
\]
which is the claim. This completes the proof of Theorem~\ref{thm:two-gen-odd}.
\end{proof}

\subsubsection{The even-rank case}
\label{sec:even-BD}

Assume $n=2m$. For $D_n$ assume $n\ge4$; for $B_n$ assume $n\ge2$, with
$B_2$ treated separately below. For $1\le i<j\le n$, set
\[
b_{ij}=
\begin{cases}
+1,&1\le j-i\le m-1,\\[1mm]
0,&j-i=m,\\[1mm]
-1,&m+1\le j-i\le n-1,
\end{cases}
\qquad
b_{ji}:=-b_{ij},\qquad b_{ii}:=0.
\]
Equivalently,
\[
\mathbf B_{\mathrm{even}}
=\sum_{k=1}^{m-1}(R^k-R^{-k})
\in M_n(\F_q).
\]
Then $\mathbf B_{\mathrm{even}}$ is circulant and skew-symmetric, and
\[
(\mathbf B_{\mathrm{even}})_{i,i+m}=0,\qquad
(\mathbf B_{\mathrm{even}})_{ij}\neq0\quad\text{for all other }i\neq j.
\]
Thus $X_{D_n}(\mathbf B_{\mathrm{even}})$ and
$X_{B_n}(\mathbf B_{\mathrm{even}})$ have nonzero components in all root spaces
except
\[
\pm(\varepsilon_i+\varepsilon_{i+m}),\qquad 1\le i\le m.
\]

\begin{theorem}[Two-generation for even $B_n$ and $D_n$]
\label{thm:two-gen-even}
Let $p\neq2$ and $n=2m$. Choose
$P_{D_n}\in O_{2n}(\F_{q^n})$ and
$P_{B_n}\in O_{2n+1}(\F_{q^n})$ as in Lemma~\ref{lem:rational-BD}, applied to
$\mathbf B=\mathbf B_{\mathrm{even}}$ (when the corresponding type is defined).
Then the following hold.
\begin{enumerate}
\item If $n\ge4$, then
\[
\bigl\langle \tilde C_{D_n},
P_{D_n}^{-1}X_{D_n}(\mathbf B_{\mathrm{even}})P_{D_n}
\bigr\rangle_{\F_q}
=
\mathfrak{so}_{2n}(\F_q).
\]
\item If $n\ge4$, then
\[
\bigl\langle \tilde C_{B_n},
P_{B_n}^{-1}X_{B_n}(\mathbf B_{\mathrm{even}})P_{B_n}
\bigr\rangle_{\F_q}
=
\mathfrak{so}_{2n+1}(\F_q).
\]
\item For $B_2$,
\[
\bigl\langle \tilde C_{B_2},
P_{B_2}^{-1}X_{B_2}(\mathbf B_{\mathrm{even}})P_{B_2}
\bigr\rangle_{\F_q}
=
\mathfrak{so}_{5}(\F_q).
\]
\end{enumerate}
\end{theorem}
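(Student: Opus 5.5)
The plan follows the proof of Theorem~\ref{thm:two-gen-odd}, with one extra step to recover the root spaces that $X(\mathbf B_{\mathrm{even}})$ misses. Put $\F=\F_{q^n}$. By Lemma~\ref{lem:rational-BD} we may pass to $T=T_{D_n}$ or $T_{B_n}$ and $X=X_{D_n}(\mathbf B_{\mathrm{even}})$ or $X_{B_n}(\mathbf B_{\mathrm{even}})$ over $\F$. Once we show that $T$ and $X$ generate $\g$ over $\F$, conjugation by $P^{-1}$ (which preserves $\g$) and Lemma~\ref{lem:descent} give the statement over $\F_q$, exactly as in the odd case. By Remark~\ref{rem:consistent}, $T$ is consistent. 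The proof of Lemma~\ref{lem:generation-splitting} then applies to the sum over the support $\Phi'=\Phi\setminus\Psi$, where $\Psi=\{\pm(\varepsilon_i+\varepsilon_{i+m}):1\le i\le m\}$. The Vandermonde argument on $A_k=[X,T,\dots,T]$ for $0\le k<|\Phi'|$ shows that the generated subalgebra $L$ contains $\g_\alpha$ for every $\alpha\in\Phi'$.

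It remains to produce $\g_\beta$ for $\beta\in\Psi$. For type $D_n$ with $n\ge4$, write $\varepsilon_i+\varepsilon_{i+m}=(\varepsilon_i-\varepsilon_j)+(\varepsilon_j+\varepsilon_{i+m})$ for any $j\notin\{i,i+m\}$, which exists since $n\ge4$. Both summands lie in $\Phi'$: every $\varepsilon_a-\varepsilon_b$ is in $\Phi'$, and $\varepsilon_j+\varepsilon_{i+m}\in\Psi$ would require $j\equiv i+m\pm m\equiv i \pmod n$. The sum is a root, and in a Chevalley basis $[e_\gamma,e_\delta]=\pm(r+1)e_{\gamma+\delta}$ with $r\le 1$ in simply laced or rank-$\ge$ classical settings, so the bracket is nonzero because $p\ne2$. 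The negatives are handled in the same way. For type $B_n$ we can either use the same decomposition or use $(\varepsilon_i)+(\varepsilon_{i+m})$. The short roots $\pm\varepsilon_i$ are all in $\Phi'$, and the structure constant is $\pm1$ or $\pm2$, which is again nonzero since $p\neq2$. I would verify these brackets directly from the matrix root vectors in the tables, for example $[E_{i,j}-E_{-j,-i},\,E_{j,-k}-E_{k,-j}]=E_{i,-k}-E_{k,-i}$ for distinct $i,j,k$. Then $L\supseteq\bigoplus_{\alpha\in\Phi}\g_\alpha$, brackets $[\g_\alpha,\g_{-\alpha}]$ give $\h$, and so $L=\g$.

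The main obstacle is the case $B_2$, that is, $m=1$. Here $\mathbf B_{\mathrm{even}}=0$, so both $\pm(\varepsilon_1+\varepsilon_2)$ are missing and there is no third index $j$. I would use the short roots instead: $\varepsilon_1,\varepsilon_2\in\Phi'$, and $[E_{1,0}-E_{0,-1},\,E_{2,0}-E_{0,-2}]$ is computed directly to be $\pm(E_{1,-2}-E_{2,-1})\neq0$, with the analogous computation for the negative roots. This should hold without any restriction on $p$ other than $p\ne 2$. The rest of the argument is then identical to the case $n\ge4$. One point to check is that the number of values $\alpha(T)$ used in the Vandermonde step is $|\Phi'|$ and that these values are distinct; this follows from the consistency of $T$.
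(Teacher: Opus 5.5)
Your proposal is correct and follows the paper's route. The steps are the same: Vandermonde isolation of the root components of $X(\mathbf B_{\mathrm{even}})$ on the support $\Phi\setminus\Psi$ using the consistent $T$; recovery of $\pm(\varepsilon_i+\varepsilon_{i+m})$ via $[e_{\varepsilon_i-\varepsilon_k},e_{\varepsilon_k+\varepsilon_{i+m}}]$ with $k\notin\{i,i+m\}$ for $n\ge4$, and via $[e_{\varepsilon_1},e_{\varepsilon_2}]$ for $B_2$; then conjugation by $P^{-1}$ and Lemma~\ref{lem:descent}. Your explicit checks that $\varepsilon_k+\varepsilon_{i+m}\notin\Psi$ and of the matrix brackets fill in details the paper leaves implicit.
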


\begin{proof}
Put $\F=\F_{q^n}$ and let $P=P_{D_n}$ or $P=P_{B_n}$. Set
\[
T_{D_n}=\diag(t_1,\dots,t_n,-t_1,\dots,-t_n),
\qquad
T_{B_n}=\diag(t_1,\dots,t_n,0,-t_1,\dots,-t_n).
\]
By Lemma~\ref{lem:rational-BD},
\[
P\tilde C_{D_n}P^{-1}=T_{D_n},
\qquad
P\tilde C_{B_n}P^{-1}=T_{B_n},
\]
and
\[
Y_{D_n}:=P^{-1}X_{D_n}(\mathbf B_{\mathrm{even}})P
\in\mathfrak{so}_{2n}(\F_q),
\qquad
Y_{B_n}:=P^{-1}X_{B_n}(\mathbf B_{\mathrm{even}})P
\in\mathfrak{so}_{2n+1}(\F_q).
\]
By Remark~\ref{rem:consistent}, the diagonal matrices $T_{D_n}$ and $T_{B_n}$
are consistent. Hence, by applying powers of $\ad T$ and interpolating the
distinct eigenvalues, we can isolate from
$X_{D_n}(\mathbf B_{\mathrm{even}})$ and
$X_{B_n}(\mathbf B_{\mathrm{even}})$ their components in every root space in
which these components are nonzero.

By construction, $X_{D_n}(\mathbf B_{\mathrm{even}})$ and
$X_{B_n}(\mathbf B_{\mathrm{even}})$ have nonzero components in all root
spaces except
\[
\pm(\varepsilon_i+\varepsilon_{i+m}),\qquad 1\le i\le m.
\]
All other root vectors therefore lie in the Lie algebra generated by the
corresponding pair $(T,X)$. For $n\ge4$, choose
$k\notin\{i,i+m\}$. Then
\[
[e_{\varepsilon_i-\varepsilon_k},\,
e_{\varepsilon_k+\varepsilon_{i+m}}]
=
c\,e_{\varepsilon_i+\varepsilon_{i+m}},
\qquad c\neq0,
\]
where the brackets denote the corresponding root vectors. Since
$e_{\varepsilon_i-\varepsilon_k}$ and
$e_{\varepsilon_k+\varepsilon_{i+m}}$ are already generated, the missing root
vectors $\pm e_{\varepsilon_i+\varepsilon_{i+m}}$ are generated as well.
Hence all root vectors lie in the generated subalgebra, and therefore
\[
\bigl\langle T_{D_n},X_{D_n}(\mathbf B_{\mathrm{even}})\bigr\rangle_{\F}
=
\mathfrak{so}_{2n}(\F),
\qquad
\bigl\langle T_{B_n},X_{B_n}(\mathbf B_{\mathrm{even}})\bigr\rangle_{\F}
=
\mathfrak{so}_{2n+1}(\F).
\]

For $B_2$, we have $m=1$ and $\mathbf B_{\mathrm{even}}=0$. The element
$X_{B_2}(\mathbf B_{\mathrm{even}})$ still has nonzero components in the short
root spaces $\pm\varepsilon_1,\pm\varepsilon_2$. The missing root vector
$e_{\varepsilon_1+\varepsilon_2}$ is obtained from
\[
[e_{\varepsilon_1},e_{\varepsilon_2}]
=
c\,e_{\varepsilon_1+\varepsilon_2},
\qquad c\neq0,
\]
and similarly for its negative. Thus
\[
\bigl\langle T_{B_2},X_{B_2}(\mathbf B_{\mathrm{even}})\bigr\rangle_{\F}
=
\mathfrak{so}_{5}(\F).
\]

Conjugation by $P^{-1}$ is a Lie algebra automorphism of the full matrix
algebra preserving the corresponding orthogonal Lie algebra. It sends
\[
T_{D_n}=P\tilde C_{D_n}P^{-1}\longmapsto \tilde C_{D_n},
\qquad
X_{D_n}(\mathbf B_{\mathrm{even}})\longmapsto Y_{D_n},
\]
and
\[
T_{B_n}=P\tilde C_{B_n}P^{-1}\longmapsto \tilde C_{B_n},
\qquad
X_{B_n}(\mathbf B_{\mathrm{even}})\longmapsto Y_{B_n}.
\]
Therefore, we get
\[
\bigl\langle \tilde C_{D_n},Y_{D_n}\bigr\rangle_{\F}
=
\mathfrak{so}_{2n}(\F),
\qquad
\bigl\langle \tilde C_{B_n},Y_{B_n}\bigr\rangle_{\F}
=
\mathfrak{so}_{2n+1}(\F).
\]
The elements $\tilde C_{D_n},Y_{D_n}$ lie in
$\mathfrak{so}_{2n}(\F_q)$, and
$\tilde C_{B_n},Y_{B_n}$ lie in $\mathfrak{so}_{2n+1}(\F_q)$. By
Lemma~\ref{lem:descent}, the generation over $\F$ descends to $\F_q$.
This proves the three claims. This completes the proof of Theorem~\ref{thm:two-gen-even}.
\end{proof}

\subsection{Proof of Theorem~\ref{thm:two-generation classical}}
\label{subsec:proof}

We are now ready to prove the main theorem of this section, which establishes that all classical split simple Chevalley Lie algebras over $\mathbb{F}_q$ are two-generated. Here, we present the proof of Theorem~\ref{thm:two-generation classical}.
\begin{proof}[Proof of Theorem~\ref{thm:two-generation classical}]
Let $\mathfrak{g}\in\{\mathfrak{so}_{2n+1},\mathfrak{sp}_{2n},\mathfrak{so}_{2n}\}$ be one of the three Lie algebras, with corresponding form matrix $S$. We proceed by exhibiting an irreducible polynomial $f\in\mathbb{F}_q[x]$ of degree $n$ whose roots form a consistent set. Choose a normal element $\alpha\in\mathbb{F}_{q^n}$ over $\mathbb{F}_q$. It is well known that normal elements exist in any finite field (see~\cite{Hensel}). Let $f\in\mathbb{F}_q[x]$ be its minimal polynomial, and set
\[
t_i=\operatorname{Fr}^{i-1}(\alpha),\qquad i=1,\dots,n.
\]
Since $\alpha$ is normal, the conjugates $t_1,\dots,t_n$ form a basis of $\mathbb{F}_{q^n}$ over $\mathbb{F}_q$. Hence, as in Remark~\ref{rem:consistent}, the diagonal matrices $T$ appearing below are consistent.

\medskip
\noindent\textbf{Type $C_n$, $n\ge2$.}
Let $T=\operatorname{diag}(t_1,\dots,t_n,-t_1,\dots,-t_n)$, and let $X_{C_n}$ be as in Section~\ref{subsec:root-vectors}. By Lemma~\ref{lem:generation-splitting}, $T$ and $X_{C_n}$ generate $\mathfrak{sp}_{2n}(\mathbb{F}_{q^n})$. Let $\tilde C_{C_n}$ be the companion block matrix of Lemma~\ref{lem:companion-blocks}. By Lemma~\ref{lem:rational-Cn}, there is $P\in\operatorname{Sp}_{2n}(\mathbb{F}_{q^n})$ such that $P\tilde C_{C_n}P^{-1}=T$ and
\[
Y_{C_n}:=P^{-1}X_{C_n}P\in\mathfrak{sp}_{2n}(\mathbb{F}_q).
\]
Conjugation by $P^{-1}$ preserves $\mathfrak{sp}_{2n}(\mathbb{F}_{q^n})$ and sends $(T,X_{C_n})$ to $(\tilde C_{C_n},Y_{C_n})$. Hence, $\tilde C_{C_n}$ and $Y_{C_n}$ generate $\mathfrak{sp}_{2n}(\mathbb{F}_{q^n})$ over $\mathbb{F}_{q^n}$. Both lie in $\mathfrak{sp}_{2n}(\mathbb{F}_q)$. By Lemma~\ref{lem:descent}, they generate $\mathfrak{sp}_{2n}(\mathbb{F}_q)$ over $\mathbb{F}_q$. Thus, $\mathfrak{sp}_{2n}(\mathbb{F}_q)$ is $2$-generated.

\medskip
\noindent\textbf{Types $B_n$ and $D_n$, odd $n$.}
Assume $n\ge3$ is odd, and let $\mathbf B_{\mathrm{odd}}$ be as in Section~\ref{sec:odd-BD}. The claim is exactly Theorem~\ref{thm:two-gen-odd}, which produces the required two $\mathbb{F}_q$-rational generators $\tilde C$ and $P^{-1}X(\mathbf B_{\mathrm{odd}})P$.

\medskip
\noindent\textbf{Types $B_n$ and $D_n$, even $n$.}
This is Theorem~\ref{thm:two-gen-even}. It covers $B_n$ ($B_2$ separately) and $D_n$ for $n\ge4$.

Finally, note that, since $p\neq 2$,
\[
\mathfrak{so}_{3}(\F_q) \cong \mathfrak{sp}_{2}(\F_q) \cong \mathfrak{sl}_{2}(\F_q)=\mathfrak{psl}_{2}(\F_q).
\]
Thus, $\mathfrak{so}_{3}(\F_q)$ and $\mathfrak{sp}_{2}(\F_q)$ are also two-generated (see~\cite[Theorem]{cjz}). This completes the proof of Theorem~\ref{thm:two-generation classical}.
\end{proof}

\begin{remark}
\label{rem:summary}
Theorem~\ref{thm:two-generation classical} covers the three classical families
$B_n$, $C_n$, and $D_n$ simultaneously; the case $A_n$ is treated in~\cite{cjz}. The
consistent torus is obtained from the minimal polynomial of a normal element
$\alpha\in\F_{q^n}$ over $\F_q$, which always exists
(see~\cite{Hensel}), and whose conjugates
$t_i=\Fr^{i-1}(\alpha)$ are linearly independent over $\F_q$. The cases $D_1$ and $D_2$
are excluded because $\mathfrak{so}_{2}$ and $\mathfrak{so}_{4}$ are not simple.
\end{remark}

\section{Exceptional split simple Chevalley Lie algebras over $\mathbb{F}_q$}\label{section:exceptional}

In this section, we consider the split simple Chevalley Lie algebras of exceptional types: $\G_2(\F_q)$, $\mathfrak F_4(\F_q)$, $\mathfrak E_6(\F_q)$, $\mathfrak E_7(\F_q)$ and $\mathfrak E _8(\F_q)$. We refer the reader to an excellent book~\cite{Humphreys} for notations and definitions. The main result of this section is the following.

\begin{theorem}[2-generation of exceptional Chevalley Lie algebras]
\label{thm:two-generation exceptional}
The split simple Chevalley Lie algebras of type
\[
\G_2(\F_q),\ \mathfrak F_4(\F_q),\ \mathfrak E_6(\F_q),\ \mathfrak E_7(\F_q),\ \text{and } \mathfrak E _8(\F_q)
\]
are 2-generated, provided $(\mathfrak g, \Char(\F_q)) \ne (\G_2,2), (\G_2,3)$ and $(\mathfrak F_4,2)$.
\end{theorem}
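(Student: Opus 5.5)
I will mimic the classical strategy: find a semisimple element $T$ in the split Cartan subalgebra over an extension $\F=\F_{q^r}$ that is consistent, pair it with an element $X$ having nonzero component in every root space, and then descend to $\F_q$. Since $\mathfrak g_{\F_q}$ is split, there is no companion-matrix model available. I would instead work directly with the Chevalley basis $\{h_i\}_{i=1}^{\ell}\cup\{e_\alpha\}_{\alpha\in\Phi}$. The first step is to show that a generic $\F_q$-rational element $T=\sum_i a_i h_i\in\h_{\F_q}$ is already enough, provided $q$ is large. The values $\alpha(T)=\sum_i a_i\langle\alpha,\alpha_i^\vee\rangle$ are $\F_q$-linear forms in $(a_1,\dots,a_\ell)$. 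The conditions $\alpha(T)\ne0$ and $\alpha(T)\ne\beta(T)$ fail only on the union of the hyperplanes $\{\alpha=0\}$ and $\{\alpha-\beta=0\}$, taken mod $p$.

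The excluded pairs $(G_2,2),(G_2,3),(F_4,2)$ are exactly where $\alpha-\beta$ or $\alpha$ can vanish identically mod $p$ on $\h$, so that some of these "hyperplanes" become the whole space. Outside these pairs I will check from the Cartan matrices that each form $\alpha-\beta$ (for $\alpha\neq\beta$) is nonzero mod $p$ on $\h$. For $\alpha=-\beta$ one needs $2\alpha\ne0$, which holds since $p\ne2$. This check uses the coefficients of the coroot pairings, which lie in $\{0,\pm1,\pm2,\pm3\}$ and differences of them. Then a point outside at most $|\Phi|+\binom{|\Phi|}{2}$ hyperplanes exists in $\F_{q^r}^\ell$ as soon as $q^r$ exceeds that number, by counting. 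So I pass to an extension $\F=\F_{q^r}$ of large degree and take $T\in\h_{\F}$.

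Next, rather than a rational $T$, I would use a Frobenius-twisted torus as in Lemma~\ref{lem:rational-Cn}. Since $T$ may be irrational, I would choose $T$ inside a suitable maximal torus of $\mathfrak g_{\F_q}$ that becomes split over $\F$, for instance a Coxeter torus corresponding to a Coxeter element $w$ of order $h$. The element is $T=\sum_{k} \Fr^k$-twisted combination of a normal element, so that the pair $(T,X)$ with $X=\sum_\alpha e_\alpha$ is conjugate, by some $g\in G(\F)$ with $g^{-1}\Fr(g)=\dot w$, to a pair of $\F_q$-rational elements. The simplest version of this avoids tori altogether. If $q$ itself exceeds the hyperplane count, I take $T\in\h_{\F_q}$ and $X=\sum_{\alpha}e_\alpha\in\mathfrak g_{\F_q}$ directly. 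Then the argument of Lemma~\ref{lem:generation-splitting} applies verbatim for any root system. The Vandermonde argument yields every $e_\alpha$, and the brackets $[e_\alpha,e_{-\alpha}]=h_\alpha$ span $\h$ because the coroots span $\h$ for $p\ne2$ in simply-laced cases and outside the excluded pairs. The Vandermonde argument holds over any field, so no descent via Lemma~\ref{lem:descent} is needed in this case.

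The main obstacle is small $q$, where no rational consistent $T$ exists, for example $q=3,5$ for $E_8$ with $240$ roots. There I would first try to weaken consistency, as in Theorem~\ref{thm:two-gen-even}. I would require only that $\alpha(T)$ separate roots in the support of a cleverly chosen $X$, and that $T$ be regular. Concretely, I would take $X=\sum_{\alpha\in\Delta}e_{\alpha}+\sum_{\alpha\in\Delta}e_{-\alpha}$, the sum over simple roots and their negatives. This requires only $2\ell$ distinct nonzero values $\pm\alpha_i(T)$, which I can arrange with $T=\sum a_i h_i$ for modest $q$ after passing to $\F_{q^r}$. Then $\ad T$ isolates every $e_{\pm\alpha_i}$, and these generate $\mathfrak g$ since the $e_{\pm\alpha_i}$ generate the Chevalley algebra away from the bad pairs. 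To return to $\F_q$, I would conjugate as in Lemma~\ref{lem:rational-Cn}, using a torus whose Frobenius acts by a Coxeter element. Verifying that the conjugated $X$ is rational is the step I expect to be hardest. If it fails, I would fall back on a direct computer-free check that a specific rational $T$, with $a_i\in\{0,1\}$ chosen so that the $\alpha_i(T)$ are distinct, separates the simple roots; this needs only about $2\ell\le16$ distinct values, at least for larger $q$.
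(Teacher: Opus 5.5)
Your large-$q$ step is fine and is essentially the paper's route. You take a rational consistent $T\in\h_{\F_q}$ and $X=\sum_\alpha e_\alpha$, then apply Lemma~\ref{lem:generation-splitting} directly over $\F_q$. The only difference is that you find $T$ by a union bound over the hyperplanes $\ker\alpha$, $\ker(\alpha-\beta)$ instead of Alon's Nullstellensatz, which just raises the threshold (to $q>|\Phi|+\binom{|\Phi|}{2}=28920$ for $E_8$ as you count it).

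The gap is everything below that threshold, and neither of your plans for that range is a proof.
\begin{itemize}
\item The Coxeter-torus descent is only sketched. $\mathrm{Ad}(\dot w)$ moves root vectors with signs, so $\sum_\alpha e_\alpha$ is not fixed by $\mathrm{Ad}(\dot w)\circ\Fr$ unless you re-choose coefficients. More seriously, you give no substitute for the normal-element argument of Remark~\ref{rem:consistent}. Plain counting over the $\F_q$-points of the twisted Cartan subalgebra needs $q$ just as large as before.
\item The fallback with $a_i\in\{0,1\}$ fails outright. Then $\alpha_i(T)=\sum_j A_{ij}a_j$ is an integer in $\{-3,\dots,2\}$, so the $\alpha_i(T)$ lie in at most three nonzero classes up to sign. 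Separating the $2\ell$ vectors $e_{\pm\alpha_i}$ needs $\ell\ge4$ classes for $F_4$ and $E_n$.
\end{itemize}

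The simple-root idea itself is sound and, once repaired, reaches far below the paper's Alon thresholds. With $X=\sum_i(e_{\alpha_i}+e_{-\alpha_i})$ you only need the $2\ell$ values $\pm\alpha_i(T)$ to be distinct. Since $\det A=1,1,3,2,1$ for $G_2,F_4,E_6,E_7,E_8$, whenever $p\nmid\det A$ you can prescribe $\alpha_i(T)=\lambda_i\in\F_q$ freely by solving $Aa=\lambda$. The isolated $e_{\pm\alpha_i}$ then generate $\g_{\F_q}$, because the structure constants $\pm(r+1)$ met when building positive roots from simple ones are units for $p\neq2$ (and $p\neq2,3$ for $G_2$).

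Even so, no rational $T$ works when $(q-1)/2<\ell$. That leaves no argument for $F_4$ with $q\le7$, $E_6$ with $q\le11$, and $E_7,E_8$ with $q\le13$. $E_6$ in characteristic $3$ also needs separate care, since $A$ is singular mod $3$. The paper closes every sub-threshold field by a direct \textsc{GAP} check that explicit pairs $x,y$ in the Chevalley basis generate. You need such a finite verification, or a genuinely new idea, to finish.

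Finally, the excluded pairs are not ``exactly'' the degenerate cases. In characteristic $2$, $\alpha(T)=-\alpha(T)$ for every root of every type, so no separation by $\ad T$-eigenvalues works there. This also undercuts the paper's characteristic-$2$ consistency lemmas, since $f_{\g}$ omits the factors $2\alpha$. You should state the restriction $p\neq2$ explicitly, which is all Theorem~A needs.
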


Let \(\Phi\) be the irreducible root system of rank \(n\) for the simple Lie algebra $\mathfrak{g}$, and $\mathfrak{h}$ be its Cartan subalgebra. Let \(\Phi^+ \subset \Phi\) be a system of positive roots. We fix a basis \(\{h_1,\dots,h_n\}\) of \(\mathfrak{h}\), and let \(x_1,\dots,x_n\) be the dual coordinates on \(\mathfrak{h}\), so that a point
$T= s_1h_1+s_2h_2 + \cdots + s_nh_n$ is identified with the tuple $(s_1,s_2, \cdots, s_n)\in\mathbb F_q^n$ and $x_i(T)=s_i$ for $i=1,2,\dots,n$. Thus, for any root $\alpha$, we have
\[
\alpha(T)=\sum_{i=1}^n\alpha(h_i)\,x_i(T).
\]

Consequently, for a positive root $\gamma=\sum_{i=1}^n c_i\alpha_i$ with $c_i\in\mathbb Z_{\ge0}$,
the constants are
\[
\gamma(h_j)=\sum_{i=1}^n c_i\,\alpha_i(h_j)=\sum_{i=1}^n c_i\,A_{ij},
\qquad j=1,2, \dots , n.
\]

Now, we define the following polynomial
\[
f_\mathfrak{g}(x)
\;=\;
\prod_{\alpha \in \Phi^+} \alpha(x)
\;\cdot\;
\prod_{\substack{\alpha,\beta \in \Phi^+ \\ \alpha < \beta}} \bigl(\alpha(x) - \beta(x)\bigr)
\;\cdot\;
\prod_{\substack{\alpha,\beta \in \Phi^+ \\ \alpha \neq \beta}} \bigl(\alpha(x) + \beta(x)\bigr),
\]
Recall that we call an element \(T \in \mathfrak{h}\) \emph{consistent} if all root evaluations \(\alpha(T)\), \(\alpha \in \Phi\), are pairwise distinct and non-zero. Thus, $T$ is consistent if and only if $f_\mathfrak{g}(T) \ne 0$.  

We now recall a powerful combinatorial tool which will be used throughout this section.

\begin{theorem}[Alon's Combinatorial Nullstellensatz \cite{alon}]\label{Alon's theorem}
Let $F$ be an arbitrary field, and let $g = g(x_1, \dots, x_n)$ be a polynomial in $F[x_1, \dots, x_n]$ of degree $\deg(g) = \sum_{i=1}^n t_i$, where each $t_i$ is a non-negative integer. Suppose the coefficient of
\[
x_1^{t_1} x_2^{t_2} \cdots x_n^{t_n}
\]
in $g$ is non-zero. Then for any subsets $S_1, \dots, S_n \subseteq F$ such that $|S_i| > t_i$ for all $i$, there exists $(s_1, \dots, s_n) \in S_1 \times \cdots \times S_n$ such that
\[
g(s_1, \dots, s_n) \neq 0.
\]
\end{theorem}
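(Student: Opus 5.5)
We plan to give the standard proof of Theorem~\ref{Alon's theorem}, by contradiction. It combines a reduction of $g$ modulo the one-variable polynomials vanishing on the $S_i$ with a lemma about polynomials of bounded individual degrees that vanish on a grid. First we shrink each $S_i$ to a subset of size exactly $t_i+1$. This is harmless, since a nonvanishing point in the smaller grid is also a point of $S_1\times\cdots\times S_n$. Then assume, for contradiction, that $g(s)=0$ for all $s\in S_1\times\cdots\times S_n$.

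\textbf{Step 1 (grid lemma).} We will show that if $h\in F[x_1,\dots,x_n]$ has $\deg_{x_i}h\le t_i$ for every $i$ and $h$ vanishes on $S_1\times\cdots\times S_n$ with $|S_i|=t_i+1$, then $h=0$. The proof is by induction on $n$. For $n=1$, a nonzero polynomial of degree at most $t_1$ has at most $t_1$ roots. For the inductive step, write $h=\sum_{j=0}^{t_n}h_j(x_1,\dots,x_{n-1})x_n^j$. For each fixed $(s_1,\dots,s_{n-1})$ in the smaller grid, the one-variable polynomial in $x_n$ has $t_n+1$ roots, so all $h_j(s_1,\dots,s_{n-1})$ vanish. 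The induction hypothesis then gives $h_j=0$ for every $j$.

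\textbf{Step 2 (reduction).} For each $i$, put $g_i(x_i)=\prod_{s\in S_i}(x_i-s)=x_i^{t_i+1}-r_i(x_i)$ with $\deg r_i\le t_i$. On the grid we have $x_i^{t_i+1}=r_i(x_i)$. We repeatedly replace any occurrence of $x_i^{t_i+1}$ in a monomial of $g$ by $r_i(x_i)$. Each replacement strictly lowers the total degree of the affected monomial, so the process terminates. It produces a polynomial $\bar g$ with $\deg_{x_i}\bar g\le t_i$ for all $i$ that agrees with $g$ on the grid.

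\textbf{Step 3 (the key point).} We must check that the coefficient of $x_1^{t_1}\cdots x_n^{t_n}$ in $\bar g$ equals its coefficient in $g$, and hence is nonzero. This is the step where care is needed. Every replacement acts on a monomial containing some $x_i^{t_i+1}$, and such a monomial is never $x^t$. It is replaced by terms of total degree strictly smaller than its own, and its own degree is at most $\deg g=\sum t_i$. So no replacement can create new terms of total degree $\sum t_i$, and in particular none can alter the coefficient of $x^t$. On the other hand, $x^t$ itself satisfies the degree bounds and is never rewritten.

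Hence $\bar g\neq 0$. But $\bar g$ vanishes on the grid and obeys the degree bounds of Step 1, so Step 1 forces $\bar g=0$. This contradiction proves the theorem.
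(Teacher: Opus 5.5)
Your proof is correct. It is essentially Alon's original argument: shrink to $|S_i|=t_i+1$, reduce $g$ modulo $\prod_{s\in S_i}(x_i-s)$ so that each rewrite strictly lowers total degree and the coefficient of $x_1^{t_1}\cdots x_n^{t_n}$ survives, then contradict the grid lemma. The only difference is that you unwind his intermediate representation $g=\sum_i h_i\prod_{s\in S_i}(x_i-s)$ with $\deg h_i\le \deg g-|S_i|$ into direct coefficient tracking; the paper itself gives no proof of this statement and only cites Alon, so there is no other route to compare against.
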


The lemma below ensures that consistent elements exist provided the field $\F_q$ is sufficiently large and its characteristic avoids certain small values.

\begin{lemma}
\label{lem:consistent}
Let $\mathfrak{g}$ be a finite simple Chevalley Lie algebra and $f_\mathfrak{g}$ be the polynomial defined as above. Let $c_{(i_1,i_2,\dots,i_n)}$ be the coefficient of $x_1^{t_1} \cdots x_n^{t_n}$ in the polynomial $f_\mathfrak{g}$, 
$n' \;:=\; \gcd\bigl\{\, c_{(i_1,i_2,\dots,i_n)} \;\bigm|\; \; c_{(i_1,i_2,\dots,i_n)} \neq 0 \,\bigr\}$ and \(P(n')\) be the set of prime divisors of $n'$. If $\Char(\F_q) \notin P(n')$ and $q > max_{i=1,2,\dots,n}(t_i)$, where $t_i$ is the degree of $x_i$ in a top-degree monomial $x_1^{t_1} \cdots x_n^{t_n}$ of $f_\mathfrak{g}$ with non-zero coefficent over $\F_q$, then there exists a consistent element \(T \in \mathfrak{h}\).
\end{lemma}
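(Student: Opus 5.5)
The plan is to apply Theorem~\ref{Alon's theorem} to the reduction of $f_\mathfrak{g}$ modulo $p=\Char(\F_q)$. First, note that $f_\mathfrak{g}$ has integer coefficients. Each root $\alpha$ is evaluated as $\alpha(T)=\sum_i \alpha(h_i)x_i(T)$ with $\alpha(h_i)=\sum_k c_kA_{ki}\in\mathbb Z$, where $A$ is the Cartan matrix. So $f_\mathfrak{g}\in\mathbb Z[x_1,\dots,x_n]$, and it is homogeneous of degree
\[
D=|\Phi^+|+\binom{|\Phi^+|}{2}+|\Phi^+|(|\Phi^+|-1).
\]
Its reduction $\bar f_\mathfrak{g}\in\F_q[x_1,\dots,x_n]$ is again homogeneous. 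Every monomial with nonzero coefficient is therefore a top-degree monomial, of total degree $D=\sum_i t_i$, provided the reduction is nonzero.

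Second, we show that some top-degree coefficient survives reduction mod $p$. By hypothesis $p\nmid n'$, where $n'$ is the gcd of the nonzero integer coefficients of $f_\mathfrak{g}$. Hence at least one coefficient $c_{(t_1,\dots,t_n)}$ is not divisible by $p$, and its image in $\F_q$ is nonzero. Fix such a monomial $x_1^{t_1}\cdots x_n^{t_n}$. This is exactly the monomial referred to in the statement, and it has $\deg(\bar f_\mathfrak{g})=\sum t_i$.

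Third, apply Theorem~\ref{Alon's theorem} with $F=\F_q$ and $S_1=\cdots=S_n=\F_q$. The hypothesis $q>\max_i t_i$ gives $|S_i|=q>t_i$ for every $i$. The theorem then yields $(s_1,\dots,s_n)\in\F_q^n$ with $\bar f_\mathfrak{g}(s_1,\dots,s_n)\neq0$. Set $T=\sum_i s_ih_i\in\mathfrak h$.

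Finally, we check that $f_\mathfrak{g}(T)\neq 0$ really means consistency over $\F_q$.
\begin{itemize}
\item Nonvanishing of the first factor gives $\alpha(T)\ne0$ for all $\alpha\in\Phi^+$, hence also $(-\alpha)(T)\neq0$.
\item Nonvanishing of the second factor gives $\alpha(T)\ne\beta(T)$ for distinct positive roots. Negating both sides gives the same for pairs of negative roots.
\item Nonvanishing of the third factor gives $\alpha(T)\ne-\beta(T)$ for distinct positive $\alpha,\beta$.
\item The remaining case is $\alpha(T)=-\alpha(T)$, i.e.\ $2\alpha(T)=0$. This is excluded because $\alpha(T)\neq0$ and $p\neq2$.
\end{itemize}
If $p=2$ is allowed, $2$ will divide coefficients and this case is already absorbed by the hypothesis $p\notin P(n')$. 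Indeed, then $\alpha+\alpha$ would have to be included; I will remark that the statement's characteristic assumption covers it. So all values $\gamma(T)$, $\gamma\in\Phi$, are pairwise distinct and nonzero, and $T$ is consistent.

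The main obstacle is the second step: making precise that the gcd condition guarantees a nonzero top-degree coefficient modulo $p$, and that the exponents $t_i$ used in Alon's theorem come from that same monomial. Homogeneity resolves both. The only subtle point left is the self-pair $\alpha,-\alpha$, which needs $p\neq 2$ and not just the product.
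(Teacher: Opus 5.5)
Your argument is the paper's argument: reduce $f_\mathfrak{g}$ modulo $p$, use $p\nmid n'$ to get a surviving coefficient, apply Alon's Combinatorial Nullstellensatz with $S_i=\F_q$, and conclude that $T=\sum_i s_ih_i$ is consistent. Your homogeneity remark and your case-by-case check that $f_\mathfrak{g}(T)\neq0$ forces consistency are more careful than the paper. The paper simply asserts that $T$ is consistent if and only if $f_\mathfrak{g}(T)\neq0$. For $p\neq2$, which is the standing assumption of Theorem~A, your proof is complete.

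The part that is wrong is your disposal of $p=2$. You claim that $2$ divides the coefficients, so that the hypothesis $p\notin P(n')$ rules out $p=2$; this is false. The polynomial $f_\mathfrak{g}$ contains no factor $2\alpha(x)$, and by Gauss's lemma $n'$ is just the product of the contents of the linear factors, which need not be even. For instance, in type $A_2$ the only imprimitive factors are $\alpha_1-\alpha_2\mapsto 3x_1-3x_2$ and the sums $\alpha_1+(\alpha_1+\alpha_2)\mapsto 3x_1$ and $\alpha_2+(\alpha_1+\alpha_2)\mapsto 3x_2$, each sum occurring twice, so $n'=3^5$. For $\mathfrak{E}_6$, the paper's own computation reports that $C_{\mathfrak{E}_6}$, which equals $n'$ up to sign, has no prime divisors at all.

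On the other hand, in characteristic $2$ one has $\alpha(T)=-\alpha(T)$ for every root, so no consistent element exists, and the lemma as literally stated fails there. This case therefore cannot be proved, only excluded. You should add $p\neq2$ as an explicit hypothesis instead of claiming the gcd condition covers it. The paper's proof has the same silent dependence on $p\neq2$. This is also why its characteristic-$2$ lemmas for $\mathfrak{E}_6,\mathfrak{E}_7,\mathfrak{E}_8$ cannot be correct as stated.
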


\begin{proof}
Since $\Char(\F_q) \notin P(n')$, the polynomial $f_\mathfrak{g}$ is not identically zero over $\F_q$. Let $x_1^{t_1} \cdots x_n^{t_n}$ be a top-degree monomial of $f_\mathfrak{g}$ with non-zero coefficent over $\F_q$ such that $q > max_{i=1,2,\dots,n}$. Then for $S_i = \F_q$, by Theorem \ref{Alon's theorem}, we get a point $(s_1,\dots,s_n) \in \mathbb{F}_q^n$ such that
$f_\mathfrak{g}(s_1,\dots,s_n) \ne 0$ and hence the element $
T = \sum_{i=1}^n s_i\, h_i \in \mathfrak{h}$ is consistent.
\end{proof}

\begin{remark}\label{remark: Degree of Q_phi}
Let $m = |\Phi^+|$. The total degree of $f_\mathfrak{g}$ is
\[
d \;=\; m + \binom{m}{2} + m(m-1) \;=\; \frac{m(3m-1)}{2}.
\]
In terms of the total number of roots \(|\Phi| = 2m\), this is
\[
d \;=\; \frac{|\Phi|(3|\Phi| - 2)}{8}.
\]
\end{remark}

\noindent \textbf{Working Technique:} Lemma \ref{lem:consistent} works for all split simple Chevalley Lie algebras over finite fields. In case of exceptional Lie algebras, we first try to get a point which gives a non-zero evaluation at the polynomial $f_\mathfrak{g}$. Let $\hat{f_\mathfrak{g}}$ be the square free part of $f_\mathfrak{g}$ such that $f_\mathfrak{g} = C_\mathfrak{g} \prod_{\{L_r \in \hat{f_\mathfrak{g}}\}} L_r^r$, where $L_r$'s are distinct linear factors in $\hat{f_\mathfrak{g}}$ with no integer divisors and have the first non-zero coefficient positive. Thus, $\hat{f_\mathfrak{g}}$ is uniquely defined for $f_\mathfrak{g}$. If $C_\mathfrak{g} = 0$ in $\F_q$, then $\mathfrak{g}$ has no consistent element. So, we first aim to find characteristics over which $C_\mathfrak{g} \ne 0$ and in that case by Theorem \ref{Alon's theorem}, we aim to get a point $s$ for large enough $q$ such that $\hat{f_\mathfrak{g}}(s) \ne 0$ (if it exists). Consequently ${f_\mathfrak{g}}(s) \ne 0$ and hence we get a consistent element. By Lemma \ref{lem:generation-splitting}, $\mathfrak{g}$ is $2$-generated for these large enough $q$ values. For small values of $q$, we check $2$-generation by using a $\textsc {GAP}$ \cite{Gap} program. 

\subsection{$2$-generation for $\mathfrak{g}_2(q)$}

Let \(\Phi\) be the root system of type \(\G_2\) with simple roots \(\alpha_1\) (short) and \(\alpha_2\) (long), and Cartan matrix
\[
A=\bigl(A_{ij}\bigr)_{1\le i,j\le 2},
\qquad
A_{ij}=\alpha_i(h_j),
\qquad
A=\begin{pmatrix} 2 & -1\\ -3 & 2\end{pmatrix}.
\]
Fix the basis \(\{h_1,h_2\}\) of \(\mathfrak h\) dual to the simple roots, and let \(x_1,x_2\) be the dual coordinates on \(\mathfrak h\), so that a point \(T=s_1h_1+s_2h_2\) is identified with \((s_1,s_2)\in\mathbb F_q^2\) and \(x_i(T)=s_i\) for \(i=1,2\). The set of positive roots is
\[
\Phi^+=\{\alpha_1,\ \alpha_2,\ \alpha_1+\alpha_2,\ 
2\alpha_1+\alpha_2,\ 3\alpha_1+\alpha_2,\ 
3\alpha_1+2\alpha_2\}.
\]
The six positive roots correspond to the following linear factors in \((x_1,x_2)\):
\begin{align*}
\alpha_1(x) &= 2x_1-x_2, &
\alpha_2(x) &= -3x_1+2x_2,\\
(\alpha_1+\alpha_2)(x) &= -x_1+x_2, &
(2\alpha_1+\alpha_2)(x) &= x_1,\\
(3\alpha_1+\alpha_2)(x) &= 3x_1-x_2, &
(3\alpha_1+2\alpha_2)(x) &= x_2.
\end{align*}

\begin{lemma}\label{lemma: consistent for G_2}
The Lie algebra \(\G_2(\mathbb{F}_q)\) contains a consistent element if \(q>11\) and \(\operatorname{Char}(\mathbb{F}_q) \ne 2,3\). 
\end{lemma}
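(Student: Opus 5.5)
We plan to follow the \emph{Working Technique} described above: use Theorem~\ref{Alon's theorem} on the square-free part $\hat f_{\G_2}$ rather than on $f_{\G_2}$. Applying it to $f_{\G_2}$ directly cannot give $q>11$. By Remark~\ref{remark: Degree of Q_phi} with $m=6$, $f_{\G_2}$ has degree $51$, so any top monomial $x_1^{t_1}x_2^{t_2}$ has $\max(t_1,t_2)\ge 26$.

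\textbf{Step 1: list the linear factors.} Using the six linear forms listed above, we write out all factors of $f_{\G_2}$. These are the forms $\alpha(x)$, and the forms $\alpha(x)\pm\beta(x)$ for distinct $\alpha,\beta\in\Phi^+$. Each form $ax_1+bx_2$ with $a,b\in\mathbb Z$ is normalised by removing its integer content and fixing the sign of its first nonzero coefficient. This gives
\[
f_{\G_2}=C_{\G_2}\prod_r L_r^{e_r},
\]
with the $L_r$ pairwise distinct primitive integral forms.

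Many of the sums and differences are again roots or multiples of roots. For instance, $(2\alpha_1+\alpha_2)-(\alpha_1+\alpha_2)=\alpha_1$, and $(3\alpha_1+\alpha_2)+\alpha_2=3\alpha_1+2\alpha_2$. So the number of distinct $L_r$ should be much smaller than $51$. The target is to show $\deg\hat f_{\G_2}\le 22$.

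\textbf{Step 2: the constant $C_{\G_2}$.} The constant $C_{\G_2}$ is the product of the integer contents removed in Step~1. The contents arise from factors such as $(2\alpha_1+\alpha_2)+(\alpha_1+\alpha_2)$, which has content $3$ (it is $3(\alpha_1+\alpha_2)-\alpha_2$-type), and $\alpha_2+(3\alpha_1+\alpha_2)$, which has content $2$. We would verify that every content is a product of $2$'s and $3$'s, so that $C_{\G_2}\neq 0$ in $\F_q$ whenever $\Char(\F_q)\neq 2,3$. In characteristic $p\ge 5$, if $\hat f_{\G_2}(s)\neq0$ then $f_{\G_2}(s)=C_{\G_2}\prod_r L_r(s)^{e_r}\neq 0$. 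Distinct integral lines may coincide modulo $p$, but this is harmless, because only the nonvanishing of $\hat f_{\G_2}(s)$ in $\F_q$ matters.

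\textbf{Step 3: a good monomial.} Expand $\hat f_{\G_2}$, which is a binary form of degree $d'\le 22$ with integer coefficients. We look for the monomial $x_1^{11}x_2^{11}$ (or one with $\max(t_1,t_2)\le 11$) whose integer coefficient $c$ is nonzero. We then factor $c$.

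If every prime divisor of $c$ lies in $\{2,3\}$, Theorem~\ref{Alon's theorem} with $S_1=S_2=\F_q$ applies for all $q>11$ with $p\ge5$. If $c$ has some larger prime divisor $p_0$, we pick a second balanced monomial whose coefficient is prime to $p_0$. Equivalently, we use the gcd $n'$ of Lemma~\ref{lem:consistent} restricted to balanced monomials, and check that it has no prime divisors $\ge5$.

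In either case we obtain $(s_1,s_2)\in\F_q^2$ with $\hat f_{\G_2}(s_1,s_2)\neq0$. Then $T=s_1h_1+s_2h_2$ is consistent, exactly as in the proof of Lemma~\ref{lem:consistent}.

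\textbf{Main obstacle.} The hardest part is the explicit computation in Steps~1 and~3. We must enumerate the distinct lines, which should be the twelve-odd directions coming from the hexagonal geometry of $\G_2$, and confirm that $\deg\hat f_{\G_2}$ is small enough. We must also compute the coefficient of a balanced monomial and show it avoids all primes $\ge5$.

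If the degree turns out larger than $22$, we fall back on a direct argument. Setting $x=(1,s)$ reduces the problem to a one-variable polynomial of degree $\le d'$, and we use a fixed point such as $x_1=0$ separately. This weaker route would need $q$ somewhat larger, and the gap up to that bound would be covered by the GAP check mentioned in the Working Technique.
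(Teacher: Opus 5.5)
Your route is the paper's: pass to the square-free part $\hat f_{\G_2}$, check that only $2$ and $3$ divide $C_{\G_2}$, then apply the Combinatorial Nullstellensatz with $S_1=S_2=\F_q$. As written, though, the proposal does not prove the lemma. You never exhibit a top monomial with $\max(t_1,t_2)\le 11$ whose coefficient is prime to every $p\ge5$, and that computation is the whole content of the lemma. Moreover, the balanced monomials you aim for provably cannot do the job.

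Carrying out your Step 1 gives exactly twelve primitive factors,
\[
x_1,\ x_2,\ x_1\pm x_2,\ 2x_1-x_2,\ x_1-2x_2,\ 3x_1-x_2,\ 3x_1-2x_2,\ 4x_1-x_2,\ 4x_1-3x_2,\ 5x_1-2x_2,\ 5x_1-3x_2,
\]
so $\deg\hat f_{\G_2}=12$ and there is no monomial $x_1^{11}x_2^{11}$. Reduced modulo $7$, these forms vanish on all $8$ lines of $\F_7^2$; reduced modulo $11$, on all $12$ lines of $\F_{11}^2$. Hence $\G_2(\F_7)$ and $\G_2(\F_{11})$ contain no consistent element at all. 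By the Nullstellensatz, the coefficient of $x_1^6x_2^6$ is then divisible by $7$, and the coefficient of every $x_1^{t_1}x_2^{t_2}$ with $t_1,t_2\le 10$ is divisible by $11$. So an exponent equal to $11$ is forced, and the bound $q>11$ is sharp.

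The missing input is the paper's monomial $x_1x_2^{11}$. Since $x_1$ is itself one of the twelve factors, its coefficient is just the product of the $x_2$-coefficients of the other eleven, namely $-2^3 3^2$. This is nonzero for $p\ge5$, and with $t_1=1$, $t_2=11$ it gives exactly $q>11$. Your gcd over all monomials with $\max(t_1,t_2)\le 11$ would succeed, but only through this monomial.

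Equivalently, each primitive integral form stays nonzero modulo $p$ and vanishes on exactly one of the $q+1$ lines of $\F_q^2$, so some line avoids all twelve once $q+1>12$. This is your one-variable fallback $x_1=1$: it leaves at most $11$ bad values of $s$, since the factor $x_1$ contributes none. So it gives precisely $q>11$, not a weaker bound.

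Finally, your Step 2 examples are wrong. Both $(2\alpha_1+\alpha_2)+(\alpha_1+\alpha_2)$ and $\alpha_2+(3\alpha_1+\alpha_2)$ equal the root $3\alpha_1+2\alpha_2$, i.e.\ the primitive form $x_2$. Since $\det A=1$, the content of a form equals the gcd of its root coefficients. The genuinely non-primitive factors are ones like $\alpha_2+(3\alpha_1+2\alpha_2)=3(\alpha_1+\alpha_2)$ and $\alpha_1+(3\alpha_1+2\alpha_2)=2(2\alpha_1+\alpha_2)$. Your conclusion that only $2$ and $3$ divide $C_{\G_2}$ nevertheless holds.
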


\begin{proof}
We input the Cartan matrix of $\G_2$ in a $\textsc {GAP}$ code similar to the one listed in Listing \ref{lst:gap-for-F4-linear-factors} to find distinct linear factors in \(\hat{f}_{\G_2}\), the scalar $C_{\G_2}$ and its prime factorization. The $\textsc {GAP}$ output shows that there are $12$ distinct linear factors in \(\hat{f}_{\G_2}\), written as
\begin{align*}
=&2x_1-x_2, \ 3x_1-2x_2,\ x_1-x_2,\ x_1,\ 3x_1-x_2, \ x_2\\
&5x_1-3x_2, \ 5x_1-2x_2,\ 4x_1-3x_2,\ x_1-2x_2,\ 4x_1-x_2, \ x_1+x_2.
\end{align*}
The prime divisors of $C_{\G_2}$ as obtained by the $\textsc {GAP}$ are $2$ and $3$. So, $\G_2(\F_q)$ has no consistent element if $\Char(\F_q) = 2,3$. The coefficient of $x_1x_2^{11}$ in \(\hat{f}_{\G_2}\) is $-2^33^2$ which is non-zero if $\Char(\F_q) \ne 2,3$. Hence, by Theorem \ref{Alon's theorem} if $q>11$, $\Char(\F_q) \ne 2,3$ and $S_1=S_2=\mathbb{F}_q$, then there exists $(s_1,s_2)\in\mathbb{F}_q^2$ with \(\hat{f}_{\G_2}(s_1,s_2)\neq 0\). Thus, for this tuple \(f_{\G_2}(s_1, s_2) \ne 0\), and hence $\G_2(\F_q)$ contains a consistent element if $q >11$ and $\Char(\F_q) \ne 2,3$.
\end{proof}

\begin{proposition}\label{prop: 2-generation for G_2}
The Lie algebra \(\G_2(\mathbb{F}_q)\) is \(2\)-generated if \(\Char(\mathbb{F}_q) \ne 2,3\).
\end{proposition}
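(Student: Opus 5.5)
The proof splits according to the size of $q$, using the ``Working Technique'' above. Assume $\Char(\F_q)\ne 2,3$.

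\textbf{Large $q$.} Suppose $q>11$. By Lemma~\ref{lemma: consistent for G_2} there is a consistent element $T=s_1h_1+s_2h_2\in\mathfrak h$ with $s_1,s_2\in\F_q$. That is, the twelve values $\alpha(T)$, $\alpha\in\Phi$, are nonzero and pairwise distinct in $\F_q$.

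For the second generator take
\[
X=\sum_{\alpha\in\Phi}e_\alpha,
\]
the sum of the Chevalley basis root vectors. Its coefficients are $1$, so it lies in $\G_2(\F_q)$ and has a nonzero component in every root space.

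The proof of Lemma~\ref{lem:generation-splitting} uses only two facts. First, $\ad T$ acts diagonally on the root spaces with distinct eigenvalues, which gives an invertible Vandermonde matrix. Second, the brackets $[\g_\alpha,\g_{-\alpha}]$ span $\h$. So the same argument applies here. The iterated brackets $[X,T,\dots,T]$ for $k=0,\dots,11$ span $\bigoplus_\alpha\g_\alpha$ over $\F_q$, and no field extension or descent is needed because $T$ is already rational.

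It remains to check that $[e_\alpha,e_{-\alpha}]=h_\alpha$ for $\alpha\in\Phi$ span $\h$ in characteristic $\ne2,3$. The coroots $h_{\alpha_1},h_{\alpha_2}$ form the $\mathbb Z$-basis of $\h_{\mathbb Z}$ in a Chevalley basis, so they remain a basis after reduction mod $p$. Hence $T$ and $X$ generate $\G_2(\F_q)$.

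\textbf{Small $q$.} For $q\le 11$ with $p\ne2,3$, the remaining fields are $\F_5,\F_7,\F_{11}$ (note that $25>11$). For these there may be no consistent element, so Lemma~\ref{lem:consistent} gives nothing. Here I would do an explicit computer check in $\textsc{GAP}$. Build the $14$-dimensional Chevalley Lie algebra $\G_2(\F_q)$ via \texttt{SimpleLieAlgebra("G",2,GF(q))}. Then search for a pair $(x,y)$ whose generated subalgebra has dimension $14$.

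A natural first attempt is $x$ a regular element of $\h$ and $y=\sum_\alpha e_\alpha$, or $y=\sum_\alpha c_\alpha e_\alpha$ with varied coefficients $c_\alpha$. If that fails, use random pairs. The generation argument still works partially when only some $\alpha(T)$ coincide: separate the root spaces by eigenvalue, and recover colliding ones through brackets, as in the $B_2$ and even-rank arguments of Theorem~\ref{thm:two-gen-even}.

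\textbf{Main obstacle.} The main obstacle is these small fields. For $q=5,7$ there are likely no consistent $T$ at all, since twelve distinct nonzero values are needed in $\F_q^\times$, which has fewer than $12$ elements. So the uniform argument genuinely fails there. A computational verification, or a hand argument with a nonsemisimple or non-toral generator, is unavoidable.
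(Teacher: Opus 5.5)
Your proposal is correct and follows the paper's proof. For $q>11$ it combines Lemma~\ref{lemma: consistent for G_2} with the Vandermonde argument of Lemma~\ref{lem:generation-splitting}, and your explicit choice $X=\sum_{\alpha}e_\alpha\in\G_2(\F_q)$, together with the remark that no descent is needed, fills in details the paper leaves implicit; for $q=5,7,11$ it falls back on a \textsc{GAP} check. The paper does not search there but verifies one fixed pair, $x=e_{\alpha_1}+e_{\alpha_2}+h_1$ and $y=e_{-\alpha_1}+e_{-\alpha_2}+h_2$ (Listing~\ref{lst:gap 2-generation of small q}), and your counting remark already proves that no consistent element exists for all three of $q=5,7,11$, since $|\F_q^\times|<12$ in each case.
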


\begin{proof}
From Lemma \ref{lemma: consistent for G_2} and Lemma \ref{lem:generation-splitting}, \(\G_2(\mathbb{F}_q)\) is \(2\)-generated if \(q>11\) and \(\Char(\mathbb{F}_q) \ne 2,3\). For \(q=5,7\) and \(11\), we use the \textsc{GAP} code in Listing \ref{lst:gap 2-generation of small q} to verify that \(\G_2(\mathbb{F}_q)\) is \(2\)-generated.
\end{proof}


\subsection{$2$-generation of $\mathfrak F_4(\F_q)$}

Let \(\Phi\) be the root system of type \(\mathfrak F_4\) having simple roots \(\alpha_1,\alpha_2\) as long roots and \(\alpha_3,\alpha_4\) as short roots with the following Cartan matrix
\[
A=\bigl(A_{ij}\bigr)_{1\le i,j\le 4},
\qquad
A_{ij}=\alpha_i(h_j),
\qquad
A=
\begin{pmatrix}
2 & -1 & 0 & 0\\
-1 & 2 & -2 & 0\\
0 & -1 & 2 & -1\\
0 & 0 & -1 & 2
\end{pmatrix}.
\]
Fix the basis \(\{h_1,h_2,h_3,h_4\}\) of \(\mathfrak h\) dual to the simple roots, and let \(x_1,x_2,x_3,x_4\) be the dual coordinates on \(\mathfrak h\), so that a point
\(T=s_1h_1+s_2h_2+s_3h_3+s_4h_4\)
is identified with \((s_1,s_2,s_3,s_4)\in\mathbb F_q^4\) and \(x_i(T)=s_i\) for \(i=1,2,3,4\).

\noindent Consequently, the linear factors for simple roots are
\[
\begin{aligned}
\alpha_1(x)&=2x_1-x_2,\\
\alpha_2(x)&=-x_1+2x_2-2x_3,\\
\alpha_3(x)&=-x_2+2x_3-x_4,\\
\alpha_4(x)&=-x_3+2x_4.
\end{aligned}
\]

\noindent For a positive root \(
\beta=c_1\alpha_1+c_2\alpha_2+c_3\alpha_3+c_4\alpha_4,\)
the associated linear factor is
\[
\beta(x)=d_1x_1+d_2x_2+d_3x_3+d_4x_4,
\qquad \text{where }
(d_1,d_2,d_3,d_4)=(c_1,c_2,c_3,c_4)A.
\]

We input the Cartan matrix for $\mathfrak{F}_4$ in the \textsc{GAP} code listed in listing \ref{lst:gap-for-F4-linear-factors} to generate the positive roots of \(\mathfrak{F}_4\) by applying the simple reflections and then compute their associated linear factors. Since \(\mathfrak{F}_4\) has \(24\) positive roots, the resulting table contains \(24\) tuples $(c_1, c_2,c_3,c_4)$ for positive roots of the form $\sum_{i=1}^4c_i\alpha_i$ and the coordinates $(d_1,d_2,d_3,d_4)$ of their associated linear factors $\sum_{i=1}^4d_ix_i$. We subsequently use the \(24\) linear factors associated to positive roots to construct all pairwise differences and pairwise sums with distinct indices.

\begin{table}[ht]
\centering
\captionsetup{font=tiny}
\caption{Positive roots of $\mathfrak{F}_4$ and their corresponding linear factor coefficients.}
\tiny
\renewcommand{\arraystretch}{1.15}
\setlength{\tabcolsep}{5pt}
\begin{tabular}{c c|c c}
\hline
Root coefficients & Linear factor coefficients &
Root coefficients & Linear factor coefficients \\
\hline
$(1,0,0,0)$ & $(2,-1,0,0)$ &
$(1,2,2,0)$ & $(0,1,0,-2)$ \\

$(0,1,0,0)$ & $(-1,2,-2,0)$ &
$(1,1,2,2)$ & $(1,-1,0,2)$ \\

$(0,0,1,0)$ & $(0,-1,2,-1)$ &
$(1,1,1,1)$ & $(1,0,-1,1)$ \\

$(0,0,0,1)$ & $(0,0,-1,2)$ &
$(0,1,2,1)$ & $(-1,0,1,0)$ \\

$(1,1,0,0)$ & $(1,1,-2,0)$ &
$(1,2,2,2)$ & $(0,1,-2,2)$ \\

$(0,1,2,0)$ & $(-1,0,2,-2)$ &
$(1,1,2,1)$ & $(1,-1,1,0)$ \\

$(0,1,1,0)$ & $(-1,1,0,-1)$ &
$(1,2,4,2)$ & $(0,-1,2,0)$ \\

$(0,0,1,1)$ & $(0,-1,1,1)$ &
$(1,2,2,1)$ & $(0,1,-1,0)$ \\

$(1,1,2,0)$ & $(1,-1,2,-2)$ &
$(1,3,4,2)$ & $(-1,1,0,0)$ \\

$(0,1,2,2)$ & $(-1,0,0,2)$ &
$(1,2,3,1)$ & $(0,0,1,-1)$ \\

$(1,1,1,0)$ & $(1,0,0,-1)$ &
$(2,3,4,2)$ & $(1,0,0,0)$ \\

$(0,1,1,1)$ & $(-1,1,-1,1)$ &
$(1,2,3,2)$ & $(0,0,0,1)$ \\
\hline
\end{tabular}
\label{tab:F4-root-linear-forms}
\end{table}


\begin{lemma}\label{lemma (1): consistent for F_4}
The Lie algebra $\mathfrak F_4(\mathbb{F}_q)$ contains a consistent element if $q>155$ and $\Char (\F_q) \ne 2,3$. 
\end{lemma}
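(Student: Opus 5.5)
The plan is to follow the template of the proof of Lemma~\ref{lemma: consistent for G_2}, now with the data of Table~\ref{tab:F4-root-linear-forms}.

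\textbf{Step 1: build $\hat f_{\mathfrak F_4}$.} From the $24$ linear forms $\beta(x)=(c_1,\dots,c_4)A$ in the table I will form:
\begin{itemize}
\item the forms $\beta(x)$ themselves;
\item the $\binom{24}{2}$ differences $\beta(x)-\gamma(x)$;
\item the $24\cdot 23$ sums $\beta(x)+\gamma(x)$.
\end{itemize}
Each factor is divided by the gcd of its integer coefficients, with the sign normalised so that the first nonzero coefficient is positive. I then collect the distinct ones; this is the computation in Listing~\ref{lst:gap-for-F4-linear-factors}. It produces the square-free part $\hat f_{\mathfrak F_4}=\prod_r L_r$ and the scalar $C_{\mathfrak F_4}$ with $f_{\mathfrak F_4}=C_{\mathfrak F_4}\prod_r L_r^{r}$.

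Two facts make the analysis tidy. A difference of two roots is either a root or not, and a sum of two roots is (up to sign) either a root, twice a root, or a non-root vector. So every $L_r$ is a primitive integer vector in the root lattice, written in fundamental-weight coordinates. Also, the content factors removed are small integers, essentially $2$ and $3$, coming from the long/short ratio and the $\mathfrak F_4$ string lengths. I therefore expect $C_{\mathfrak F_4}$ to have only $2$ and $3$ as prime divisors, and I will confirm this by factoring $C_{\mathfrak F_4}$ in GAP. Hence, when $\Char(\F_q)\ne 2,3$, every factor of $f_{\mathfrak F_4}$ is a nonzero scalar times some $L_r$, and $f_{\mathfrak F_4}(s)\neq 0$ if and only if $\hat f_{\mathfrak F_4}(s)\ne0$. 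A further small point to check is that no $L_r$ becomes identically zero mod $p$; this holds because each $L_r$ is primitive.

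\textbf{Step 2: apply Theorem~\ref{Alon's theorem} to $\hat f_{\mathfrak F_4}$.} Let $N$ be the number of distinct linear factors, so $\deg\hat f_{\mathfrak F_4}=N$. I need a monomial $x_1^{t_1}x_2^{t_2}x_3^{t_3}x_4^{t_4}$ of total degree $N$ satisfying two conditions:
\begin{itemize}
\item its integer coefficient in $\hat f_{\mathfrak F_4}$ is of the form $\pm 2^a3^b$, so it is nonzero mod every $p\ne 2,3$;
\item $\max_i t_i\le 155$.
\end{itemize}
The natural choice is the most balanced monomial available, namely one with $t_i\approx N/4$. This assumes $N$ is roughly $4\cdot 155$ or somewhat less, with the threshold $155$ being the largest exponent in the chosen monomial. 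I will expand $\hat f_{\mathfrak F_4}$ in GAP (or compute the single coefficient by iteratively multiplying factors while truncating exponents above the target) and read off that coefficient and its factorisation.

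Once such a monomial is found, Theorem~\ref{Alon's theorem} with $S_i=\F_q$ and $q>155\ge t_i$ gives a point $s\in\F_q^4$ with $\hat f_{\mathfrak F_4}(s)\ne0$. Hence $f_{\mathfrak F_4}(s)\neq0$, and $T=\sum s_ih_i$ is consistent.

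\textbf{Main obstacle.} The hard step is finding a top-degree monomial whose coefficient avoids every prime other than $2$ and $3$. A generic coefficient of a product of hundreds of linear forms is a huge integer and will typically have large prime factors. If the balanced monomial fails, I would first search nearby exponent vectors with $\max t_i\le155$ for a coefficient that factors as $\pm2^a3^b$. Failing that, I would fall back on Lemma~\ref{lem:consistent} in its gcd form: take several monomials with $\max t_i\le 155$ whose coefficients have gcd supported on $\{2,3\}$. For a given $p\ne 2,3$, at least one of these coefficients is nonzero mod $p$, and Theorem~\ref{Alon's theorem} is applied to that monomial.

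The expansion itself is the computational bottleneck. To keep it feasible, I would extract only the needed coefficients rather than the full product, for example by expanding with degree bounds in each variable.
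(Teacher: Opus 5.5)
Your Step~1 is sound; the paper's computation shows that $C_{\mathfrak F_4}$ is divisible only by the prime $2$. Step~2, however, has a genuine gap. You never produce a monomial whose coefficient in $\hat f_{\mathfrak F_4}$ is provably of the form $\pm 2^a3^b$, and you leave this as the ``main obstacle.'' The coefficient of a balanced monomial is a signed sum over an enormous number of ways of distributing the variables among the factors. Nothing controls its prime divisors, and it could be divisible by a large prime or even vanish. Searching nearby exponent vectors, or taking gcds of several coefficients, is an open-ended computation rather than an argument. Your reading of the threshold is also off: $\hat f_{\mathfrak F_4}$ has $N=192$ distinct factors, not roughly $620$, so $155$ is far from $N/4$.

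The missing idea is to take the lexicographically leading monomial. Let $d_i$ be the number of distinct normalized factors $L_r$ whose first nonzero coordinate is the $x_i$-coordinate; the computation gives $(d_1,d_2,d_3,d_4)=(155,31,5,1)$. When expanding $\prod_r L_r$, only the $d_1$ factors of the first kind contain $x_1$, so to obtain $x_1^{d_1}$ each of them must contribute its $x_1$-term. Among the remaining factors, only the $d_2$ of the second kind contain $x_2$, and so on. Hence the coefficient of $x_1^{155}x_2^{31}x_3^{5}x_4$ is a single term with no cancellation: the product of the first nonzero coefficients of the $L_r$.

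All entries in Table~\ref{tab:F4-root-linear-forms} lie in $\{-2,\dots,2\}$, so every sum or difference has entries of absolute value at most $4$, and normalization only divides. Each leading coefficient therefore lies in $\{1,2,3,4\}$, and the product is nonzero modulo every $p\neq 2,3$. Theorem~\ref{Alon's theorem} then applies with $\max_i t_i=155$, which is exactly where the hypothesis $q>155$ comes from. It also explains why $p=3$ is treated separately in Lemma~\ref{lemma: consistent for F_4 (char=3)}: $3$ divides some leading coefficients, e.g.\ that of $(3,0,-2,0)=\alpha_1(x)+(\alpha_1+\alpha_2)(x)$.

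Your fallback search would eventually hit this monomial, since its exponents satisfy $\max t_i\le 155$. But without this observation the proposal does not establish the lemma.
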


\begin{proof}
A direct computation using the \textsc{GAP} code in Listing \ref{lst:gap-for-F4-linear-factors} shows that the prime divisor of $C_{\mathfrak F_4}$ is $2$. So, $\mathfrak F_4(\F_q)$ has no consistent element if $\Char(\F_q) = 2$. Further, as per the \textsc{GAP} output, the polynomial $\hat{f}_{\mathfrak{F}_4}$ contains $192$ linear factors, out of which $155$ contain a non-zero coefficient for $x_1$, $31$ factors have a zero coefficient for $x_1$ but a non-zero coefficient for $x_2$, and $5$ factors have zero coefficients for both $x_1$ and $x_2$ with a non-zero coefficient for $x_3$.

Consequently, the coefficient of $x_1^{155} x_2^{31} x_3^{5} x_4^1$ in the product of all these linear factors is obtained by multiplying three parts together:

\begin{itemize}
\item The $x_1$-coefficients of the $155$ factors in which they are non-zero;
\item The $x_2$-coefficients of the $31$ factors in which the $x_1$-coefficient is zero and the $x_2$-coefficient is non-zero;
\item The $x_3$-coefficients of the $5$ factors in which the coefficients of both $x_1$ and $x_2$ are zero and the $x_3$-coefficient is non-zero; and
\item The $x_4$-coefficient of the remaining factor.
\end{itemize} 

Using the above idea, the coefficient of $x_1^{155} x_2^{31} x_3^{5} x_4^1$ is calculated through the same $\textsc {GAP}$ code. This coefficient is non-zero if $\Char(\F_q) \ne 2,3$. Hence, by Theorem \ref{Alon's theorem}, if $q>155$, $\Char(\F_q) \ne 2,3$ and $S_1=S_2=S_3=S_4=\mathbb{F}_q$, then there exists $(s_1,s_2,s_3,s_4)\in\mathbb{F}_q^4$ with \(\hat{f}_{\mathfrak F_4}(s_1,s_2,s_3,s_4)\neq 0\), and thus \(f_{\mathfrak F_4}(s_1, s_2,s_3,s_4) \ne 0\). Consequently, $\mathfrak F_4(\F_q)$ contains a consistent element if $q >155$ and $\Char(\F_q) \ne 2,3$. 
\end{proof}

\begin{lemma}\label{lemma: consistent for F_4 (char=3)}
The Lie algebra $\mathfrak F_4(\mathbb{F}_q)$ contains a consistent element if $q>27$ and $\Char (\F_q) = 3$. 
\end{lemma}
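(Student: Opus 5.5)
The plan is to repeat the argument of Lemma~\ref{lemma (1): consistent for F_4}, but to choose the monomial so that its coefficient survives reduction modulo $3$. The proof of that lemma already shows that $C_{\mathfrak F_4}$ has $2$ as its only prime divisor. Hence in characteristic $3$ the scalar $C_{\mathfrak F_4}$ is nonzero. It then suffices to find $(s_1,\dots,s_4)\in\F_q^4$ with $\hat f_{\mathfrak F_4}(s)\neq0$, since this gives $f_{\mathfrak F_4}(s)\neq0$ and so a consistent element. The difficulty is that the lexicographically greedy monomial $x_1^{155}x_2^{31}x_3^5x_4$ has a coefficient divisible by $3$. For example, factors such as $3x_1-x_2$-type forms have $x_1$-coefficients divisible by $3$, and these feed into the product.

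\textbf{Step 1: reduce the linear factors modulo $3$.} I would first reduce the $192$ linear factors $L_r$ of $\hat f_{\mathfrak F_4}$ modulo $3$. Since $C_{\mathfrak F_4}\not\equiv0$, no factor vanishes identically. However, several distinct integral factors may become proportional modulo $3$. I would collect the distinct projective classes $\bar L_1,\dots,\bar L_N$ over $\F_3$ with multiplicities $r_1,\dots,r_N$. Nonvanishing of $\hat f_{\mathfrak F_4}$ at a point of $\F_q^4$ is then equivalent to nonvanishing of $g=\prod_j \bar L_j$, which has degree $N\le192$.

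\textbf{Step 2: choose an ordering of the variables.} I would redo the greedy count used before, but allow an arbitrary ordering of the variables, possibly after an invertible change of coordinates over $\F_3$. With the ordering $x_{\sigma(1)},\dots,x_{\sigma(4)}$, the monomial is obtained by assigning each $\bar L_j$ to the first variable in which it has a nonzero coefficient. Its coefficient in $g$ is then a product of nonzero elements of $\F_3$. The reason is that the ordering is triangular: any other assignment of variables to factors that produces the same monomial must agree with this one. I would check this step carefully, since it is exactly the triangularity argument used implicitly in the proof of Lemma~\ref{lemma (1): consistent for F_4}. The exponents are $t_{\sigma(k)}=\#\{j:\bar L_j$ first involves $x_{\sigma(k)}\}$. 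The target is an ordering for which the largest exponent is at most $27$. This matches the stated bound, so that $q>27$ suffices in Theorem~\ref{Alon's theorem} with $S_i=\F_q$.

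\textbf{Step 3: the main obstacle.} The hardest part is that $N$ projective classes over $\F_3$ with largest triangular exponent $\le27$ seem unlikely if $N$ is large. There are only $3^3=27$ lines with nonzero $x_1$-coefficient after normalizing, and this count explains the bound $27$. Consequently the first variable receives at most $27$ factors, and similar bounds hold for the later variables. So the count of classes is automatically $\le 27+9+3+1$, and the greedy exponent in the first variable is at most $27$. Under this reading, Step 2 becomes a pigeonhole bound on the number of points of $\mathbb P^3(\F_3)$ and is essentially automatic. The one genuine verification is that the reduction in Step 1 is legitimate: that $\hat f_{\mathfrak F_4}$ nonvanishing is equivalent to $\prod_j\bar L_j$ nonvanishing over $\F_q\supseteq\F_3$. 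This holds because each $L_r$ reduces to a nonzero $\F_3$-multiple of some $\bar L_j$. I would confirm the list of classes with the \textsc{GAP} code of Listing~\ref{lst:gap-for-F4-linear-factors} modified to work over $\F_3$, and then apply Theorem~\ref{Alon's theorem}.
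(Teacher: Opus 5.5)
Your argument is correct and follows the same route as the paper: reduce the primitive factors of $\hat f_{\mathfrak F_4}$ modulo $3$, merge proportional ones into projective classes over $\F_3$, take the triangular monomial determined by each class's first nonzero variable, and apply the Combinatorial Nullstellensatz with $S_i=\F_q$. The only difference is that you justify the nonzero coefficient and the bound $t_1\le 27$ conceptually: the triangular coefficient is a product of units, and the points of $\mathbb P^3(\F_3)$ split as $27+9+3+1$ by leading variable. The paper instead reads these facts off a \textsc{GAP} computation, which shows that all $40$ classes occur, so its monomial is exactly $x_1^{27}x_2^{9}x_3^{3}x_4$.
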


\begin{proof}
When $\Char (\F_q) = 3$, the $\textsc {GAP}$ code further normalize the linear factors and check for distinct factors. The output shows that the product of scalars that comes out of this normalization step is $\equiv 2 \mod{(3)}$. Also, after normalization, we are left with only $40$ distinct linear factors. We compute the coefficient of $x_1^{27} x_2^{9} x_3^{3} x_4^1$ in the product of these distinct linear factors modulo $3$. This coefficient is non-zero and hence, by applying Theorem \ref{Alon's theorem}, if $q>27$, $\Char(\F_q) =3$ and $S_1=S_2=S_3=S_4=\mathbb{F}_q$, then there exists a consistent element in $\mathfrak F_4(\F_q)$. 
\end{proof}

\begin{proposition}\label{prop: 2-generation for F_4}
The Lie algebra $\mathfrak F_4(\mathbb{F}_q)$ is $2$-generated if $\Char(\F_q) \ne 2$. 
\end{proposition}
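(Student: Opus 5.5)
The plan is to follow the same template as Proposition~\ref{prop: 2-generation for G_2}: combine the existence of consistent elements with Lemma~\ref{lem:generation-splitting}, and settle the finitely many remaining small fields by direct computation.

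\textbf{Large $q$.} Let $p=\Char(\F_q)\neq 2$.
\begin{enumerate}
\item If $p\neq 3$ and $q>155$, Lemma~\ref{lemma (1): consistent for F_4} gives a consistent element $T\in\mathfrak h$ with $T=\sum s_ih_i$ and $s_i\in\F_q$.
\item If $p=3$ and $q>27$, Lemma~\ref{lemma: consistent for F_4 (char=3)} gives such a $T$.
\end{enumerate}
Next take $X=\sum_{\alpha\in\Phi}e_\alpha$, the sum of the Chevalley basis root vectors. This is defined over $\F_q$ and has a nonzero component in every root space. The proof of Lemma~\ref{lem:generation-splitting} only uses the following facts:
\begin{itemize}
\item the root space decomposition;
\item invertibility of the Vandermonde matrix built from the distinct values $\alpha(T)$;
\item the fact that $[\mathfrak g_\alpha,\mathfrak g_{-\alpha}]$ together span $\mathfrak h$.
\end{itemize}
So that argument applies verbatim to $\mathfrak F_4(\F_q)$, working directly over $\F_q$ with no descent step needed, since $T$ is already rational.

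\textbf{The Cartan step.} This is the one point needing care. The brackets $[e_\alpha,e_{-\alpha}]=h_\alpha$ give the coroots $h_{\alpha_1},\dots,h_{\alpha_4}$, which form a basis of $\mathfrak h_{\mathbb Z}\otimes\F_q$ in the Chevalley basis. Hence $\mathfrak h$ is recovered in every characteristic, and $T,X$ generate $\mathfrak F_4(\F_q)$.

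\textbf{Small $q$.} The remaining cases are the odd prime powers $q\le155$ with $p\ne 3$, together with $q=3,9,27$. I would handle these exactly as for $\G_2$, using the \textsc{GAP} routine of Listing~\ref{lst:gap 2-generation of small q}.
\begin{enumerate}
\item Build $\mathfrak F_4$ over $\F_q$ from its Chevalley basis structure constants, of dimension $52$.
\item Search for a pair $(x,y)$ whose generated subalgebra has dimension $52$.
\end{enumerate}
A natural first candidate is $y=X$ as above and $x=T\in\mathfrak h(\F_q)$ chosen as \emph{regular as possible}. If $T$ is not fully consistent, one can still isolate root vectors up to collisions among the values $\alpha(T)$, and then use brackets to fill in the missing root spaces, as in Theorem~\ref{thm:two-gen-even}. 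Failing that, one runs a random search over pairs, typically $x$ regular semisimple and $y$ random.

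\textbf{Main obstacle.} I expect the small-$q$ range to be the hard part. It is a large finite list, with $q$ up to $151$ and several prime powers such as $25,49,121,125$, and it rests on computer verification rather than a uniform argument. The case $q=3$ is the worst: there $\mathfrak h(\F_3)$ has only $81$ elements, so no consistent element exists, because there are $48$ roots but only $3$ field values. Random search should still succeed, but this is where I would concentrate effort. If needed, I would replace $T$ by a non-toral element, for example a regular element mixing a torus part and a root element. For $q=3$ one could also use a torus that is nonsplit over $\F_3$, as in the companion-matrix trick of Section~\ref{section:classical}, with descent via Lemma~\ref{lem:descent} from a larger field.
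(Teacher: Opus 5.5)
Your proposal follows the paper's proof. For large $q$, consistent elements from Lemmas~\ref{lemma (1): consistent for F_4} and~\ref{lemma: consistent for F_4 (char=3)} are fed into the Vandermonde argument of Lemma~\ref{lem:generation-splitting}, and a \textsc{GAP} check covers the remaining $q$; your remark that the coroots $h_{\alpha_i}=[e_{\alpha_i},e_{-\alpha_i}]$ span $\mathfrak h$ in every characteristic makes explicit a step the paper leaves implicit. The only difference is that the paper does not search: it checks, for all $q\le 155$, one fixed pair taken from \textsc{GAP}'s Chevalley basis, namely $x=\sum_{i=1}^4 e_{\alpha_i}+h_1$ and $y=\sum_{i=1}^4 e_{-\alpha_i}+h_2$.
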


\begin{proof}
Let $\Char(\F_q) \ne 2$. By Lemmas \ref{lemma (1): consistent for F_4} and \ref{lemma: consistent for F_4 (char=3)}, $\mathfrak F_4(\mathbb{F}_q)$ contains a consistent element if $q > 155$. Thus, by Theorem \ref{Alon's theorem}, $\mathfrak F_4(\mathbb{F}_q)$ is $2$-generated in this case. For $q \leq 155$, we run a similar \textsc{GAP} code as in Listing \ref{lst:gap 2-generation of small q} for $\mathfrak F_4(\mathbb{F}_q)$ by defining $x$ and $y$ as follows.  

\begin{align*}
 & x := b[1][1] + b[1][2] + b[1][3] + b[1][4] + b[3][1],\\
  &y := b[2][1] + b[2][2] + b[2][3] + b[2][4] + b[3][2].   
\end{align*}

The \textsc{GAP} output verifies that $\mathfrak F_4(\mathbb{F}_q)$ is $2$-generated for $q \leq 155$ as well.\\
\end{proof}



\subsection{$2$-generation for $\mathfrak E_6(\F_q))$}

Let \(\Phi\) be the root system of type \(\mathfrak E_6(\F_q)\), with simple roots \(\alpha_1,\ldots,\alpha_6\).
The Cartan matrix is $A=(A_{ij})_{1\le i,j\le 6}, \ A_{ij}=\alpha_i(h_j),
$ where
\[
A=
\begin{pmatrix}
2&0&-1&0&0&0\\
0&2&0&-1&0&0\\
-1&0&2&-1&0&0\\
0&-1&-1&2&-1&0\\
0&0&0&-1&2&-1\\
0&0&0&0&-1&2
\end{pmatrix}.
\]

Fix the basis \(\{h_1,\ldots,h_6\}\) of \(\mathfrak h\) dual to the simple roots, and let \(x_1,\ldots,x_6\) be the dual coordinates on \(\mathfrak h\).
Consequently, the linear factors corresponding to the simple roots are
\[
\begin{aligned}
\alpha_1(x)&=2x_1-x_3, \quad \quad \quad \quad \alpha_2(x)=2x_2-x_4,\\
\alpha_3(x)&=-x_1+2x_3-x_4, \quad \alpha_4(x)=-x_2-x_3+2x_4-x_5,\\
\alpha_5(x)&=-x_4+2x_5-x_6, \quad\alpha_6(x)=-x_5+2x_6.
\end{aligned}
\]

For a positive root
$\beta=\sum_{i=1}^6 c_i\alpha_i,
$
the associated linear factor is
$\beta(x)=\sum_{i=1}^6 d_i x_i,$ where $(d_1,\ldots,d_6)=(c_1,\ldots,c_6)A.$ We use \textsc{GAP} to generate the positive roots of \(\mathfrak E_6(\F_q)\) and then compute their associated linear factors (see Table \ref{tab:E6-root-linear-forms}). 

\begin{table}[ht]
\centering
\captionsetup{font=tiny}
\caption{Positive roots of $\mathfrak E_6(\F_q)$ and their corresponding
linear factor coefficients.}
\tiny
\renewcommand{\arraystretch}{1.15}
\setlength{\tabcolsep}{5pt}
\begin{tabular}{c c|c c}
\hline
\text{Root coefficients} & \text{Linear factor coefficients}
& \text{Root coefficients} & \text{Linear factor coefficients} \\ \hline
(1,0,0,0,0,0) & (2,0,-1,0,0,0) &
(0,1,1,1,1,0) & (-1,1,1,-1,1,-1) \\

(0,1,0,0,0,0) & (0,2,0,-1,0,0) &
(0,1,0,1,1,1) & (0,1,-1,0,0,1) \\

(0,0,1,0,0,0) & (-1,0,2,-1,0,0) &
(0,0,1,1,1,1) & (-1,-1,1,0,0,1) \\

(0,0,0,1,0,0) & (0,-1,-1,2,-1,0) &
(1,1,1,1,1,0) & (1,1,0,-1,1,-1) \\

(0,0,0,0,1,0) & (0,0,0,-1,2,-1) &
(1,0,1,1,1,1) & (1,-1,0,0,0,1) \\

(0,0,0,0,0,1) & (0,0,0,0,-1,2) &
(0,1,1,2,1,0) & (-1,0,0,1,0,-1) \\

(1,0,1,0,0,0) & (1,0,1,-1,0,0) &
(0,1,1,1,1,1) & (-1,1,1,-1,0,1) \\

(0,1,0,1,0,0) & (0,1,-1,1,-1,0) &
(1,1,1,2,1,0) & (1,0,-1,1,0,-1) \\

(0,0,1,1,0,0) & (-1,-1,1,1,-1,0) &
(1,1,1,1,1,1) & (1,1,0,-1,0,1) \\

(0,0,0,1,1,0) & (0,-1,-1,1,1,-1) &
(0,1,1,2,1,1) & (-1,0,0,1,-1,1) \\

(0,0,0,0,1,1) & (0,0,0,-1,1,1) &
(1,1,2,2,1,0) & (0,0,1,0,0,-1) \\

(1,0,1,1,0,0) & (1,-1,0,1,-1,0) &
(1,1,1,2,1,1) & (1,0,-1,1,-1,1) \\

(0,1,1,1,0,0) & (-1,1,1,0,-1,0) &
(0,1,1,2,2,1) & (-1,0,0,0,1,0) \\

(0,1,0,1,1,0) & (0,1,-1,0,1,-1) &
(1,1,2,2,1,1) & (0,0,1,0,-1,1) \\

(0,0,1,1,1,0) & (-1,-1,1,0,1,-1) &
(1,1,1,2,2,1) & (1,0,-1,0,1,0) \\

(0,0,0,1,1,1) & (0,-1,-1,1,0,1) &
(1,1,2,2,2,1) & (0,0,1,-1,1,0) \\

(1,1,1,1,0,0) & (1,1,0,0,-1,0) &
(1,1,2,3,2,1) & (0,-1,0,1,0,0) \\

(1,0,1,1,1,0) & (1,-1,0,0,1,-1) &
(1,2,2,3,2,1) & (0,1,0,0,0,0)\\
\hline
\end{tabular}
\label{tab:E6-root-linear-forms}
\end{table}

\begin{lemma}\label{lemma (1): consistent for E_6}
The Lie algebra $\mathfrak E_6(\mathbb{F}_q)$ contains a consistent element if $q>401$ and $\Char (\F_q) \ne 2,3$. 
\end{lemma}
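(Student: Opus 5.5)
I would follow the same working technique used for $\G_2$ and $\mathfrak F_4$ in Lemmas~\ref{lemma: consistent for G_2} and~\ref{lemma (1): consistent for F_4}. The first step is to feed the Cartan matrix of $\mathfrak E_6$ into the \textsc{GAP} code of Listing~\ref{lst:gap-for-F4-linear-factors}. This produces the $36$ positive roots and their linear forms $\beta(x)=(c_1,\dots,c_6)A\cdot x$, as in Table~\ref{tab:E6-root-linear-forms}. From these I would form all the linear factors of $f_{\mathfrak E_6}$: the $36$ forms $\beta(x)$, the $\binom{36}{2}$ differences $\beta(x)-\gamma(x)$, and the $36\cdot 35$ sums $\beta(x)+\gamma(x)$. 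Each factor is then normalised to be primitive (content $1$) with first nonzero coefficient positive. The scalars stripped off multiply to $C_{\mathfrak E_6}$, and the distinct normalised factors make up $\hat f_{\mathfrak E_6}$.

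Next I would compute $C_{\mathfrak E_6}$ and factor it. In the simply laced case all roots have the same length, so I expect the only contents arising from sums and differences of two roots are powers of $2$, and possibly $3$ from the coordinates in the dual basis (the determinant of the $\mathfrak E_6$ Cartan matrix is $3$). If $\Char(\F_q)\ne 2,3$, then $C_{\mathfrak E_6}\neq 0$ in $\F_q$. Moreover, two normalised factors that are distinct over $\mathbb Z$ stay distinct modulo $p$, or at least their product does not vanish identically, so $f_{\mathfrak E_6}(s)\neq 0$ holds exactly when $\hat f_{\mathfrak E_6}(s)\neq0$.

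To apply Theorem~\ref{Alon's theorem} I would use the lexicographic monomial, as for $\mathfrak F_4$. Sort the distinct factors of $\hat f_{\mathfrak E_6}$ by the index of their first nonzero coordinate: say $N_1$ factors involve $x_1$, $N_2$ have zero $x_1$-coefficient but nonzero $x_2$-coefficient, and so on up to $N_6$. The coefficient of $x_1^{N_1}\cdots x_6^{N_6}$ in $\hat f_{\mathfrak E_6}$ is then the product of the leading coefficients of all the factors. This holds because that monomial is lexicographically the largest one, and only one choice of variable from each factor can produce it. The claimed bound $q>401$ suggests $N_1=401$ is the largest of the $N_i$. The product of the leading coefficients is an integer; I would have \textsc{GAP} factor it and check that its only prime divisors are $2$ and $3$. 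Then, for $\Char(\F_q)\neq2,3$ and $q>401\ge \max N_i$, Theorem~\ref{Alon's theorem} with $S_i=\F_q$ gives an $s\in\F_q^6$ with $\hat f_{\mathfrak E_6}(s)\ne0$. Hence $f_{\mathfrak E_6}(s)\neq0$, and $T=\sum s_ih_i$ is consistent.

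The main obstacle is purely computational and bookkeeping. I must make sure the normalisation is done correctly, so that equal factors up to sign or scalar are merged; otherwise $\hat f$ would not be square-free and the count $N_1$ would be wrong. I must also verify that the leading-coefficient product has no prime divisor besides $2$ and $3$. Both checks would be delegated to the \textsc{GAP} listing, and I would double-check $N_1$ by hand, counting sums and differences whose $x_1$-coefficient is nonzero.
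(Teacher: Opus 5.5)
Your proposal is correct and follows essentially the same route as the paper. You compute the distinct primitive factors of $\hat f_{\mathfrak E_6}$ and the constant $C_{\mathfrak E_6}$ (which has no prime divisor), take the lexicographically leading monomial $x_1^{401}x_2^{95}x_3^{28}x_4^{6}x_5$, whose coefficient is the product of the leading coefficients, and apply the Combinatorial Nullstellensatz with $S_i=\F_q$. Your side remark that distinct normalised factors stay distinct modulo $p$ is unnecessary, and it is not true for arbitrary integer forms. The equivalence $f_{\mathfrak E_6}(s)\neq0\iff\hat f_{\mathfrak E_6}(s)\neq0$ follows directly from the integer identity $f_{\mathfrak E_6}=C_{\mathfrak E_6}\prod L_r^{r}$ once $C_{\mathfrak E_6}\neq0$ in $\F_q$.
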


\begin{proof}
A direct computation using the \textsc{GAP} code similar to Listing \ref{lst:gap-for-F4-linear-factors} shows that $C_{\mathfrak E_6}$ has no prime divisor. Further, the polynomial $\hat{f}_{\mathfrak{E}_6}$ contains $531$ linear factors. Let $d_i$ be the number of linear factors in $\hat{f}_{\mathfrak{E}_6}$ having zero coefficient of $x_1, x_2, \dots, x_{i-1}$ and a non-zero coefficient of $x_i$, then as per the \textsc{GAP} output, $d_1 = 401$, $d_2=95$, $d_3=28$, $d_4=6$, $d_5=1$ and $d_6=0$.
Thus, we calculate the coefficient of $x_1^{401} x_2^{95} x_3^{28} x_4^6 x_5$ in the product of all linear factors in $\hat{f}_{\mathfrak{E}_6}$ through the same $\textsc {GAP}$ code. This coefficient is non-zero if $\Char(\F_q) \ne 2,3$. Hence, by Theorem \ref{Alon's theorem}, if $q>401$ and $\Char(\F_q) \ne 2,3$, then $\mathfrak E_6(\F_q)$ contains a consistent element. 
\end{proof}

\begin{lemma}\label{lemma: consistent for E_6 (char=2)}
The Lie algebra $\mathfrak E_6(\mathbb{F}_q)$ contains a consistent element if $q>32$ and $\Char (\F_q) = 2$. 
\end{lemma}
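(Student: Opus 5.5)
The plan follows the working technique and mirrors the proof of Lemma~\ref{lemma: consistent for F_4 (char=3)}. The only difference is that all computations are carried out modulo $2$ rather than $3$.

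\textbf{Step 1: reduce the factors of $f_{\mathfrak E_6}$ modulo $2$.} Starting from the $36$ positive roots in Table~\ref{tab:E6-root-linear-forms}, we form the linear factors $\alpha(x)$, $\alpha(x)-\beta(x)$ and $\alpha(x)+\beta(x)$ that make up $f_{\mathfrak E_6}$. Each factor is then reduced modulo $2$.
\begin{itemize}
\item A factor all of whose coefficients are even would make $f_{\mathfrak E_6}$ vanish identically over $\F_q$. This cannot occur for a pure root factor: its coefficient vector is $(c_1,\dots,c_6)A$, and $A$ is invertible modulo $2$ because $\det A=3$.
\item For sums and differences, $\alpha\pm\beta$ reduces to $(c_\alpha\pm c_\beta)A$ modulo $2$. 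This vector is nonzero modulo $2$ whenever the root coordinates $c_\alpha$ and $c_\beta$ differ modulo $2$.
\item The potential danger is a pair $\alpha\neq\beta$ with $c_\alpha\equiv c_\beta \pmod 2$. Since $\det A$ is odd, such a pair gives $\alpha(x)-\beta(x)\equiv 0$ over $\F_2$.
\end{itemize}
Since the paper asserts the lemma, I expect a genuine consistent element to exist here. Indeed, $\alpha(T)$ takes values in $\F_q$ rather than $\F_2$, so the reduction must be done with the evaluation points lying in $\F_q$. The factor $\alpha-\beta$ over $\F_q$ is the form $(c_\alpha-c_\beta)A$. Its integer entries may all be even, in which case it is $2$ times a primitive form, and that primitive form is nonzero.

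\textbf{Step 2: normalise and collect a scalar.} Following the normalisation in Lemma~\ref{lemma: consistent for F_4 (char=3)}, we divide each factor by the gcd of its coefficients to make it primitive. Before doing so, we must confirm in GAP that the accumulated product of extracted scalars is odd. Note that $C_{\mathfrak E_6}$ has no prime divisor, by the proof of Lemma~\ref{lemma (1): consistent for E_6}. We then reduce the primitive forms modulo $2$ and discard repetitions. Over $\F_2$, a nonzero linear form in $x_1,\dots,x_6$ is one of only $63$ forms, so at most $63$ distinct factors remain.

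\textbf{Step 3: apply Theorem~\ref{Alon's theorem}.} Order the distinct factors over $\F_2$ by the index of their first nonzero coefficient. Let $d_i$ be the number of factors whose first nonzero variable is $x_i$; if all $63$ forms occur, these counts are $32,16,8,4,2,1$. Exactly as in Lemmas~\ref{lemma (1): consistent for F_4} and~\ref{lemma (1): consistent for E_6}, the coefficient of $x_1^{d_1}\cdots x_6^{d_6}$ in the product of the distinct factors is the product of the leading coefficients. Each of these leading coefficients equals $1$ modulo $2$, so the coefficient of this monomial is nonzero.

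Take $S_i=\F_q$. If $q>\max d_i=32$, Theorem~\ref{Alon's theorem} gives a point $s\in\F_q^6$ at which the reduced square-free product does not vanish. Consequently $f_{\mathfrak E_6}(s)\neq0$, and $T=\sum s_ih_i$ is consistent.

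\textbf{Main obstacle.} The delicate part is Step 2. We must be sure that every factor of $f_{\mathfrak E_6}$ becomes, after removing an odd scalar and powers of $2$, a nonzero form modulo $2$. We must also check that the collected scalar $C$ stays odd. A factor congruent to $0$ modulo $2$ after making it primitive is impossible, so the real check is that the powers of $2$ extracted do not obstruct nonvanishing. This is the GAP check I would run first, adapting Listing~\ref{lst:gap-for-F4-linear-factors} with the prime $3$ replaced by $2$.

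One point must be double-checked alongside this. The characteristic-$2$ Chevalley algebra may need $\alpha(T)$ computed in the $h_i$-coordinates, so the count of distinct factors, and hence the bound $32$, should be confirmed against the actual output.
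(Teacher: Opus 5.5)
Your plan is the paper's own argument. You reduce the factors of $f_{\mathfrak E_6}$ modulo $2$, collapse them to at most $63$ distinct forms over $\F_2$, and apply the Combinatorial Nullstellensatz to $x_1^{32}x_2^{16}x_3^{8}x_4^{4}x_5^{2}x_6$. Your triangularity argument for that coefficient is a cleaner substitute for the paper's GAP computation: forms whose first nonzero variable is $x_i$ are forced to contribute $x_i$, so the coefficient is the product of leading coefficients, which is $1$ over $\F_2$, and $d_i\le 2^{6-i}$ whichever forms occur.

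\textbf{The gap is your final sentence, and the paper's proof ends with the same inference.} In characteristic $2$, for every $\alpha\in\Phi$ and every $T\in\h$ one has $(-\alpha)(T)=-\alpha(T)=\alpha(T)$. So the values $\alpha(T)$, $\alpha\in\Phi$, are never pairwise distinct: $\h$ contains no consistent element at all, and the lemma as stated is false. The equivalence ``$T$ consistent iff $f_{\g}(T)\neq 0$'' silently uses $p\neq 2$. Indeed, $f_{\g}$ omits the factors $2\alpha(x)$ that separate $\alpha$ from $-\alpha$, and these vanish identically modulo $2$. What you (and the paper) actually obtain is a $T$ whose values $\alpha(T)$, $\alpha\in\Phi^+$, are nonzero and pairwise distinct. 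This weaker property does not feed into Lemma~\ref{lem:generation-splitting} either. Since $\ad T$ acts by the same scalar on $\g_\alpha$ and $\g_{-\alpha}$, interpolation only isolates $c_\alpha e_\alpha+c_{-\alpha}e_{-\alpha}$. Theorem~A assumes $p\neq 2$, so it is unaffected.

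Separately, your treatment of the ``danger'' in Step 1 is wrong. Suppose $(c_\alpha-c_\beta)A$ has all entries even. Then $\alpha(x)-\beta(x)$ is the zero form on $\h$ over $\F_q$, and evaluating at points of $\F_q$ changes nothing. Writing it as $2$ times a primitive form only moves a factor $2=0$ into $C_{\mathfrak E_6}$, which kills $f_{\mathfrak E_6}$. For the same reason, ``removing powers of $2$'' in your last paragraph is not allowed.

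In fact the danger never arises, and you can see this without GAP. Normalize $(\alpha,\alpha)=2$, and suppose $\alpha\neq\beta$ are positive roots with $\alpha-\beta=2\gamma$, $\gamma\in Q$. Since the root lattice $Q$ is even and positive definite, $(\alpha-\beta,\alpha-\beta)=4(\gamma,\gamma)\ge 8$. But $(\alpha-\beta,\alpha-\beta)=4-2(\alpha,\beta)\le 6$ for $\beta\neq\pm\alpha$, a contradiction. Also $\alpha\notin 2Q$, since its norm is $2<8$, and $\alpha+\beta\equiv\alpha-\beta\pmod{2Q}$. Since $\det A=3$ is odd, every factor of $f_{\mathfrak E_6}$ therefore has odd content and stays nonzero modulo $2$.
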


\begin{proof}
When $\Char (\F_q) = 2$, the $\textsc {GAP}$ code further normalize the linear factors and check for distinct factors. The product of scalars that comes out of this normalization step is $\equiv 1 \mod{(2)}$. Also, after normalization, we are left with only $63$ distinct linear factors. We compute the coefficient of $x_1^{32} x_2^{16} x_3^{8} x_4^4 x_5^2 x_6$ in the product of these distinct linear factors modulo $2$. This coefficient is non-zero and hence, by Theorem \ref{Alon's theorem}, if $q>32$ and $\Char(\F_q) =2$, then there exists a consistent element in $\mathfrak E_6(\F_q)$. 
\end{proof}

\begin{lemma}\label{lemma: consistent for E_6 (char=3)}
The Lie algebra $\mathfrak E_6(\mathbb{F}_q)$ contains a consistent element if $q>81$ and $\Char (\F_q) = 3$. 
\end{lemma}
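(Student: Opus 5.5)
The plan mirrors Lemma~\ref{lemma: consistent for E_6 (char=2)} and Lemma~\ref{lemma: consistent for F_4 (char=3)}: reduce $f_{\mathfrak E_6}$ modulo $3$, then apply Theorem~\ref{Alon's theorem} with $S_i=\F_q$.

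\textbf{Step 1 (normalize the linear factors modulo $3$).} Run the \textsc{GAP} routine of Listing~\ref{lst:gap-for-F4-linear-factors} on the Cartan matrix of $\mathfrak E_6$ and list all linear factors of $f_{\mathfrak E_6}$: the $36$ forms $\alpha(x)$ of Table~\ref{tab:E6-root-linear-forms}, their pairwise differences, and their pairwise sums. Reduce every coefficient modulo $3$ and discard any factor that becomes identically zero; if one does, $f_{\mathfrak E_6}\equiv 0$ and no consistent element exists. Lemma~\ref{lemma (1): consistent for E_6} says $C_{\mathfrak E_6}$ has no prime divisor, which suggests this does not happen, but it must be re-checked in characteristic $3$. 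Rescale each surviving factor so its first nonzero coefficient is $1$, keeping track of the product of the scalars removed; this product must be nonzero mod $3$, which holds automatically because only nonzero scalars are removed. Then collect the distinct normalized factors. This gives $f_{\mathfrak E_6}\equiv c\prod_r L_r^{m_r}$ over $\F_3$ with $c\neq 0$, so $f_{\mathfrak E_6}(s)\neq 0$ exactly when $\hat f(s):=\prod_r L_r(s)\neq 0$.

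\textbf{Step 2 (find a monomial with nonzero coefficient).} Over $\F_3$ there are at most $(3^6-1)/2=364$ normalized linear forms in six variables, so the number of distinct factors is bounded. I expect most of these forms to occur, and in particular the forms whose leading variable is $x_i$ to number up to $3^{6-i}$. Let $d_i$ be the number of distinct factors whose first nonzero coefficient is at $x_i$. Following the pattern of the characteristic-$2$ case (exponents $32,16,\dots$), I would take the monomial $x_1^{d_1}\cdots x_6^{d_6}$, which is top degree. The statement's bound $q>81$ suggests $d_1=81$, with exponents of the shape $81,27,9,3,1,\dots$. The coefficient of this monomial is a sum over ways of assigning each factor a variable, not the single product of leading coefficients. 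So I would compute it by explicit multiplication modulo $3$ in \textsc{GAP}, which is feasible for a product of at most about $121$ linear forms in six variables over $\F_3$.

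\textbf{Step 3 (apply Theorem~\ref{Alon's theorem}).} With $S_i=\F_q$, $|S_i|=q>81\ge \max_i d_i$, the theorem gives $s\in\F_q^6$ with $\hat f(s)\neq 0$. Hence $f_{\mathfrak E_6}(s)\neq 0$, and $T=\sum_i s_ih_i$ is consistent.

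\textbf{Main obstacle.} Step 2 is the hardest. The coefficient of the chosen monomial may vanish mod $3$: a full set of linear forms over $\F_3$ behaves like a Moore-determinant product whose top monomials are restricted. If it vanishes, I would next try other exponent vectors of the same total degree with each exponent at most $81$, for instance permuting the variable order or moving one unit of degree between adjacent variables, until \textsc{GAP} finds a nonzero coefficient. This keeps the bound $q>81$.
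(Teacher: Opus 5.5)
Your plan is essentially the paper's proof: normalize the factors modulo $3$ (the paper records the product of the removed scalars as $2 \bmod 3$), collect the distinct factors, compute by machine the coefficient of $x_1^{81}x_2^{27}x_3^{9}x_4^{3}x_5$ in their product, and apply Theorem~\ref{Alon's theorem} with $S_i=\F_q$. One correction to your expectation: the Cartan matrix of $\mathfrak E_6$ has determinant $3$, so all the forms lie in a $5$-dimensional subspace modulo $3$, giving at most $(3^5-1)/2=121$ distinct factors rather than up to $364$ (the paper finds all $121$); their product is then, up to a nonzero scalar, the Moore determinant of an echelon basis of that subspace, in which this monomial is the diagonal term with coefficient $\pm1$, so the vanishing you feared cannot occur.
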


\begin{proof}
When $\Char (\F_q) = 3$, further normalization involves scalars whose product is $\equiv 2 \mod{(3)}$ and we have only $121$ distinct linear factors in this case. We compute the coefficient of $x_1^{81} x_2^{27} x_3^{9} x_4^3 x_5$ in the product of these distinct linear factors modulo $3$. This coefficient is non-zero and hence, by Theorem \ref{Alon's theorem}, there exists a consistent element in this case. 
\end{proof}

\begin{proposition}\label{prop: 2-generation for E_6}
The Lie algebra $\mathfrak E_6(\mathbb{F}_q)$ is $2$-generated. 
\end{proposition}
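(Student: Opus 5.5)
The plan follows the same pattern as Propositions~\ref{prop: 2-generation for G_2} and \ref{prop: 2-generation for F_4}: split according to the characteristic and the size of $q$. For large $q$ I use a consistent element; for the finitely many small $q$ I fall back on a direct computation. Since $C_{\mathfrak E_6}$ has no prime divisor, no characteristic is excluded a priori. The three consistency lemmas then give:
\begin{itemize}
\item Lemma~\ref{lemma (1): consistent for E_6}: a consistent element $T\in\h$ exists when $\Char(\F_q)\neq 2,3$ and $q>401$;
\item Lemma~\ref{lemma: consistent for E_6 (char=2)}: a consistent element exists when $\Char(\F_q)=2$ and $q>32$;
\item Lemma~\ref{lemma: consistent for E_6 (char=3)}: a consistent element exists when $\Char(\F_q)=3$ and $q>81$.
\end{itemize}

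In each of these ranges, take $T$ to be the consistent element. For the second generator take $X=\sum_{\alpha\in\Phi}e_\alpha$, the sum of the Chevalley basis root vectors, which lies in $\mathfrak E_6(\F_q)$ and has nonzero component in every root space. The Vandermonde argument of Lemma~\ref{lem:generation-splitting} applies verbatim over $\F_q$. The iterated brackets $[X,T,\dots,T]$ for $k=0,\dots,|\Phi|-1=71$ span $\bigoplus_\alpha\g_\alpha$, because the $72$ values $\alpha(T)$ are pairwise distinct and nonzero.

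Brackets $[e_\alpha,e_{-\alpha}]=h_\alpha$ then recover the coroots. I need to check that they span $\h$ in every characteristic, since in characteristic $3$ the coroots $h_{\alpha_i}$ of $E_6$ could fail to span the full Chevalley Cartan $\h_{\F_q}$. In the simply connected form, $\det A=3$, so this is a genuine issue. Two remedies are available:
\begin{itemize}
\item $T$ itself lies in the generated subalgebra, and the $\F_q$-span of $T$ together with the $h_\alpha$ can be checked to be $\h$;
\item alternatively, interpret $\mathfrak E_6(\F_q)$ as the adjoint-type algebra, where the $h_{\alpha_i}$ form a basis.
\end{itemize}
I would state explicitly which form is meant and use whichever remedy is needed. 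With $\h$ in hand, $T$ and $X$ generate $\mathfrak E_6(\F_q)$ for all sufficiently large $q$.

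The remaining cases are the finitely many prime powers below the thresholds:
\begin{itemize}
\item $q\in\{2,4,8,16,32\}$;
\item $q\in\{3,9,27,81\}$;
\item $q\le 401$ with $p\ge 5$, namely the primes $5\le p\le 401$ together with $25,49,121,125,169,289,343,361$.
\end{itemize}
For these I would run the \textsc{GAP} routine of Listing~\ref{lst:gap 2-generation of small q} on the Chevalley basis of $E_6$. I would choose explicit $x,y$ in the same style as for $\mathfrak F_4$: $x$ a sum of positive simple root vectors plus a Cartan element, and $y$ a sum of negative simple root vectors plus a different Cartan element, or random elements if that fails. The check is whether $\dim\langle x,y\rangle=78$.

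The main obstacle is this finite computation. The number of fields is large, and $\dim=78$ Lie-closure computations over fields such as $\F_{343}$ may be costly. The risk is that a single uniform choice of $x,y$ fails for some tiny field, such as $\F_2$ or $\F_3$. If it does, I would first try searching among random pairs for that specific $q$. Failing that, I would exploit a regular semisimple element from a non-split torus, so that its eigenvalues lie in an extension field, and descend via Lemma~\ref{lem:descent}, mimicking the normal-element trick used in Section~\ref{section:classical}.
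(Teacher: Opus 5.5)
Your route is the paper's: the three consistency lemmas with Lemma~\ref{lem:generation-splitting} for large $q$, and a \textsc{GAP} check for the remaining $q$ with essentially the paper's generators $x=b[1][1]+\cdots+b[1][6]+b[3][1]$ and $y=b[2][1]+\cdots+b[2][6]+b[3][2]$. In odd characteristic this works. The branch $\Char(\F_q)=2$, $q>32$, however, fails at the step where you assert that the $72$ values $\alpha(T)$ are pairwise distinct. In characteristic $2$ one has $(-\alpha)(T)=-\alpha(T)=\alpha(T)$ for every root, so no element of $\h$ is consistent and the Vandermonde matrix in Lemma~\ref{lem:generation-splitting} is singular. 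The polynomial $f_{\g}$ contains $\alpha+\beta$ only for $\alpha\neq\beta$, so it never tests $2\alpha(T)\neq0$; this is why ``$C_{\mathfrak E_6}$ has no prime divisor'' does not exclude characteristic $2$. There, a nonzero value of $f_{\g}$ only makes the $36$ values on $\Phi^+$ distinct and nonzero. The iterated brackets $[X,T,\dots,T]$ then isolate only the $36$ vectors $c_\alpha e_\alpha+c_{-\alpha}e_{-\alpha}$, not the individual root vectors. So for $q=2^k\ge 64$ you need a genuinely different argument. The paper's proof cites the same characteristic-$2$ lemma and has the same defect. Since Theorem~A assumes $p\neq2$, the clean fix is to restrict the proposition to odd $p$, where $\alpha(T)\neq0$ already forces $\alpha(T)\neq-\alpha(T)$.

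Separately, your discussion of the Cartan subalgebra has the two forms interchanged. In the Chevalley-basis form $\g_{\mathbb Z}\otimes\F_q$ (the simply connected form, which is the one the paper uses), the elements $h_i=[e_{\alpha_i},e_{-\alpha_i}]$ form a basis of $\h$ by construction. So brackets recover $\h$ in every characteristic, and there $\det A=3$ only produces a one-dimensional center in characteristic $3$, which is harmless for generation. It is in the adjoint form that the coroots span an index-$3$ sublattice of the coweight lattice, hence only a hyperplane of $\h$ modulo $3$. In that form $[\g,\g]$ has codimension $1$, and you would need $T$ outside the span of the $h_\alpha$, which consistency alone does not guarantee. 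So no remedy is needed for the paper's form, and your second remedy should name the simply connected form rather than the adjoint one.
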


\begin{proof}
By Lemmas \ref{lemma (1): consistent for E_6}, \ref{lemma: consistent for E_6 (char=2)} and \ref{lemma: consistent for E_6 (char=3)}, $\mathfrak E_6(\mathbb{F}_q)$ contains a consistent element if $q > 401$, $\Char(\F_q) \ne 2,3$; $q > 32$, $\Char(\F_q) = 2$; and $q > 81$, $\Char(\F_q) = 3$. Thus, by Lemma \ref{lem:generation-splitting}, $\mathfrak E_6(\mathbb{F}_q)$ is $2$-generated in this case. For the remaining values of $q$, a similar \textsc{GAP} code as in Listing \ref{lst:gap 2-generation of small q} verifies that $\mathfrak E_6(\mathbb{F}_q)$ is generated by the following elements.

\begin{align*}
 & x := b[1][1] + b[1][2] + b[1][3] + b[1][4]  + b[1][5] + b[1][6] + b[3][1],\\
  &y := b[2][1] + b[2][2] + b[2][3] + b[2][4] + b[2][5] + b[2][6] + b[3][2].   
\end{align*}
\end{proof}

\subsection{$2$-generation for $\mathfrak E_7(\F_q)$}

Let \(\Phi\) be the root system of type \(\mathfrak E_7(\F_q)\), with simple roots \(\alpha_1,\ldots,\alpha_7\). The Cartan matrix is \(A=(A_{ij})_{1\le i,j\le 7}\), \(A_{ij}=\alpha_i(h_j)\), where
\[
A=
\begin{pmatrix}
2&0&-1&0&0&0&0\\
0&2&0&-1&0&0&0\\
-1&0&2&-1&0&0&0\\
0&-1&-1&2&-1&0&0\\
0&0&0&-1&2&-1&0\\
0&0&0&0&-1&2&-1\\
0&0&0&0&0&-1&2
\end{pmatrix}.
\]

Fix the basis \(\{h_1,\ldots,h_7\}\) of \(\mathfrak h\) dual to the simple roots, and let \(x_1,\ldots,x_7\) be the dual coordinates on \(\mathfrak h\). Consequently, the linear factors corresponding to the simple roots are
\[
\begin{aligned}
\alpha_1(x)&=2x_1-x_3, \qquad \quad \qquad
\alpha_2(x)=2x_2-x_4,\\
\alpha_3(x)&=-x_1+2x_3-x_4, \qquad
\alpha_4(x)=-x_2-x_3+2x_4-x_5,\\
\alpha_5(x)&=-x_4+2x_5-x_6, \qquad
\alpha_6(x)=-x_5+2x_6-x_7,\\
\alpha_7(x)&=-x_6+2x_7.
\end{aligned}
\]

We again use \textsc{GAP} to generate the positive roots of \(\mathfrak E_7(\F_q)\) and then compute their associated linear factors (see Table \ref{tab:E7-root-linear-forms}).

\begin{tiny}
\begin{longtable}{c c c c}
\captionsetup{font=tiny}
\caption{Positive roots and their linear factors for $\mathfrak E_7(\F_q)$.}
\label{tab:E7-root-linear-forms}\\
\hline
Positive root coefficients & Linear factor coefficients &
Positive root coefficients & Linear factor coefficients \\
\hline
\endfirsthead

\hline
Positive root coefficients & Linear factor coefficients &
Positive root coefficients & Linear factor coefficients \\
\hline
\endhead
$(1,0,0,0,0,0,0)$ & $(2,0,-1,0,0,0,0)$ & $(1,1,1,1,1,1,0)$ & $(1,1,0,-1,0,1,-1)$ \\
$(0,1,0,0,0,0,0)$ & $(0,2,0,-1,0,0,0)$ & $(1,0,1,1,1,1,1)$ & $(1,-1,0,0,0,0,1)$ \\
$(0,0,1,0,0,0,0)$ & $(-1,0,2,-1,0,0,0)$ & $(0,1,1,2,1,1,0)$ & $(-1,0,0,1,-1,1,-1)$ \\
$(0,0,0,1,0,0,0)$ & $(0,-1,-1,2,-1,0,0)$ & $(0,1,1,1,1,1,1)$ & $(-1,1,1,-1,0,0,1)$ \\
$(0,0,0,0,1,0,0)$ & $(0,0,0,-1,2,-1,0)$ & $(1,1,2,2,1,0,0)$ & $(0,0,1,0,0,-1,0)$ \\
$(0,0,0,0,0,1,0)$ & $(0,0,0,0,-1,2,-1)$ & $(1,1,1,2,1,1,0)$ & $(1,0,-1,1,-1,1,-1)$ \\
$(0,0,0,0,0,0,1)$ & $(0,0,0,0,0,-1,2)$ & $(1,1,1,1,1,1,1)$ & $(1,1,0,-1,0,0,1)$ \\
$(1,0,1,0,0,0,0)$ & $(1,0,1,-1,0,0,0)$ & $(0,1,1,2,2,1,0)$ & $(-1,0,0,0,1,0,-1)$ \\
$(0,1,0,1,0,0,0)$ & $(0,1,-1,1,-1,0,0)$ & $(0,1,1,2,1,1,1)$ & $(-1,0,0,1,-1,0,1)$ \\
$(0,0,1,1,0,0,0)$ & $(-1,-1,1,1,-1,0,0)$ & $(1,1,2,2,1,1,0)$ & $(0,0,1,0,-1,1,-1)$ \\
$(0,0,0,1,1,0,0)$ & $(0,-1,-1,1,1,-1,0)$ & $(1,1,1,2,2,1,0)$ & $(1,0,-1,0,1,0,-1)$ \\
$(0,0,0,0,1,1,0)$ & $(0,0,0,-1,1,1,-1)$ & $(1,1,1,2,1,1,1)$ & $(1,0,-1,1,-1,0,1)$ \\
$(0,0,0,0,0,1,1)$ & $(0,0,0,0,-1,1,1)$ & $(0,1,1,2,2,1,1)$ & $(-1,0,0,0,1,-1,1)$ \\
$(1,0,1,1,0,0,0)$ & $(1,-1,0,1,-1,0,0)$ & $(1,1,2,2,2,1,0)$ & $(0,0,1,-1,1,0,-1)$ \\
$(0,1,1,1,0,0,0)$ & $(-1,1,1,0,-1,0,0)$ & $(1,1,2,2,1,1,1)$ & $(0,0,1,0,-1,0,1)$ \\
$(0,1,0,1,1,0,0)$ & $(0,1,-1,0,1,-1,0)$ & $(1,1,1,2,2,1,1)$ & $(1,0,-1,0,1,-1,1)$ \\
$(0,0,1,1,1,0,0)$ & $(-1,-1,1,0,1,-1,0)$ & $(0,1,1,2,2,2,1)$ & $(-1,0,0,0,0,1,0)$ \\
$(0,0,0,1,1,1,0)$ & $(0,-1,-1,1,0,1,-1)$ & $(1,1,2,3,2,1,0)$ & $(0,-1,0,1,0,0,-1)$ \\
$(0,0,0,0,1,1,1)$ & $(0,0,0,-1,1,0,1)$ & $(1,1,2,2,2,1,1)$ & $(0,0,1,-1,1,-1,1)$ \\
$(1,1,1,1,0,0,0)$ & $(1,1,0,0,-1,0,0)$ & $(1,1,1,2,2,2,1)$ & $(1,0,-1,0,0,1,0)$ \\
$(1,0,1,1,1,0,0)$ & $(1,-1,0,0,1,-1,0)$ & $(1,2,2,3,2,1,0)$ & $(0,1,0,0,0,0,-1)$ \\
$(0,1,1,1,1,0,0)$ & $(-1,1,1,-1,1,-1,0)$ & $(1,1,2,3,2,1,1)$ & $(0,-1,0,1,0,-1,1)$ \\
$(0,1,0,1,1,1,0)$ & $(0,1,-1,0,0,1,-1)$ & $(1,1,2,2,2,2,1)$ & $(0,0,1,-1,0,1,0)$ \\
$(0,0,1,1,1,1,0)$ & $(-1,-1,1,0,0,1,-1)$ & $(1,2,2,3,2,1,1)$ & $(0,1,0,0,0,-1,1)$ \\
$(0,0,0,1,1,1,1)$ & $(0,-1,-1,1,0,0,1)$ & $(1,1,2,3,2,2,1)$ & $(0,-1,0,1,-1,1,0)$ \\
$(1,1,1,1,1,0,0)$ & $(1,1,0,-1,1,-1,0)$ & $(1,2,2,3,2,2,1)$ & $(0,1,0,0,-1,1,0)$ \\
$(1,0,1,1,1,1,0)$ & $(1,-1,0,0,0,1,-1)$ & $(1,1,2,3,3,2,1)$ & $(0,-1,0,0,1,0,0)$ \\
$(0,1,1,2,1,0,0)$ & $(-1,0,0,1,0,-1,0)$ & $(1,2,2,3,3,2,1)$ & $(0,1,0,-1,1,0,0)$ \\
$(0,1,1,1,1,1,0)$ & $(-1,1,1,-1,0,1,-1)$ & $(1,2,2,4,3,2,1)$ & $(0,0,-1,1,0,0,0)$ \\
$(0,1,0,1,1,1,1)$ & $(0,1,-1,0,0,0,1)$ & $(1,2,3,4,3,2,1)$ & $(-1,0,1,0,0,0,0)$ \\
$(0,0,1,1,1,1,1)$ & $(-1,-1,1,0,0,0,1)$ & $(2,2,3,4,3,2,1)$ & $(1,0,0,0,0,0,0)$ \\
$(1,1,1,2,1,0,0)$ & $(1,0,-1,1,0,-1,0)$ &  \\
\hline
\end{longtable}
\end{tiny}

\begin{lemma}\label{lemma (1): consistent for E_7}
The Lie algebra $\mathfrak E_7(\mathbb{F}_q)$ contains a consistent element if $q>1037$ and $\Char (\F_q) \ne 2,3$. 
\end{lemma}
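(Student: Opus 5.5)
The approach is the same as for $\mathfrak E_6$ in Lemma~\ref{lemma (1): consistent for E_6}. We reduce to the square-free part $\hat f_{\mathfrak E_7}$ of $f_{\mathfrak E_7}$ and apply Theorem~\ref{Alon's theorem} to a monomial whose exponents are read off from a triangular ordering of the linear factors.

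\textbf{Step 1: linear factors and the constant.} Using the Cartan matrix of $\mathfrak E_7$, we run the \textsc{GAP} code of Listing~\ref{lst:gap-for-F4-linear-factors} on the $63$ linear forms of Table~\ref{tab:E7-root-linear-forms}. This produces all factors $\alpha(x)$, $\alpha(x)-\beta(x)$ and $\alpha(x)+\beta(x)$. Each factor is normalized by dividing out the gcd of its coefficients and making its first nonzero coefficient positive. We then collect the distinct normalized factors $L_r$ and the accumulated scalar $C_{\mathfrak E_7}$. We record the prime divisors of $C_{\mathfrak E_7}$ and expect them to lie in $\{2,3\}$. In characteristic $\ne 2,3$ we then have $C_{\mathfrak E_7}\neq 0$, and also $\hat f_{\mathfrak E_7}(s)\neq0$ implies $f_{\mathfrak E_7}(s)\neq0$.

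\textbf{Step 2: choice of monomial.} For $i=1,\dots,7$, let $d_i$ be the number of distinct factors whose coefficients of $x_1,\dots,x_{i-1}$ vanish and whose coefficient of $x_i$ is nonzero. Then $\sum d_i=\deg \hat f_{\mathfrak E_7}$, and we take the monomial $x_1^{d_1}\cdots x_7^{d_7}$. We expect $d_1=1037$ and $d_1\ge d_i$ for all $i$, which explains the bound $q>1037$.

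The coefficient of this monomial in $\hat f_{\mathfrak E_7}$ is not simply the product of leading coefficients. Other distributions of exponents among the factors can also contribute, namely those where a factor with support beginning at $x_j$ supplies some $x_k$ with $k>j$. So \textbf{I will compute the coefficient by actual polynomial expansion}. To keep the size manageable, I will expand successively in $x_1$, then $x_2$, and so on, retaining at each stage only the terms that can still reach the target exponents. Alternatively, I will evaluate the coefficient by interpolation modulo several large primes. Finally, I check that the resulting integer has no prime divisors other than $2$ and $3$.

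\textbf{Step 3: conclusion.} With $S_i=\F_q$, $|S_i|=q>1037\ge d_i$ and $\Char(\F_q)\neq 2,3$, Theorem~\ref{Alon's theorem} gives $s\in\F_q^7$ with $\hat f_{\mathfrak E_7}(s)\neq0$. Hence $f_{\mathfrak E_7}(s)\neq0$, so $T=\sum s_ih_i$ is consistent.

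\textbf{Main obstacle.} The hard step is Step 2. The coefficient extraction for a product of over a thousand linear forms in seven variables is computationally heavy, and its integer value must be verified to be free of primes other than $2$ and $3$. If this coefficient turned out to be divisible by another prime, I would switch to a different top-degree monomial by reordering the variables.
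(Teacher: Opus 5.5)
Your outline matches the paper's proof. Both normalize to $\hat f_{\mathfrak E_7}$, count the $d_i$ by first nonzero variable, and apply Theorem~\ref{Alon's theorem} with $S_i=\F_q$. The paper's \textsc{GAP} run gives $d_1=1037$, $d_2=299$, $d_3=82$, $d_4=22$, $d_5=7$, $d_6=2$, $d_7=0$, and finds that $C_{\mathfrak E_7}$ has no prime divisor.

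However, your assertion in Step~2 is false: no other distribution of exponents contributes to the coefficient of $x_1^{d_1}\cdots x_7^{d_7}$. The argument is a simple induction:
\begin{enumerate}
\item Only the $d_1$ forms whose first nonzero variable is $x_1$ can supply $x_1$, and exactly $d_1$ copies are needed. So each of them must contribute its $x_1$-term.
\item After that, only the $d_2$ forms beginning at $x_2$ can still supply $x_2$, so each contributes its $x_2$-term.
\item Inductively, every form contributes its leading term.
\end{enumerate}
Hence the expansion has a single surviving term. The coefficient is the product of the first nonzero coefficients of the $1449$ normalized forms. This is exactly what the paper computes: see the routine \texttt{ProductFirstNonZero} in Listing~\ref{lst:gap-for-F4-linear-factors}, and the explicit description in the $\mathfrak F_4$ case.

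This also makes the prime check transparent. In these coordinates each positive-root form has entries in $\{-1,0,1,2\}$, so sums and differences have entries of absolute value at most $4$. After normalization every leading coefficient therefore lies in $\{1,2,3,4\}$. The coefficient is thus of the form $\pm 2^a3^b$, nonzero whenever $\Char(\F_q)\neq 2,3$.

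Your brute-force expansion or interpolation would return the same number, so the proof is not broken. But the ``main obstacle'' you identify does not exist, and the fallback of reordering variables is never needed.
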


\begin{proof}
The \textsc{GAP} code similar to Listing \ref{lst:gap-for-F4-linear-factors} shows that $C_{\mathfrak E_7}$ has no prime divisor and the polynomial $\hat{f}_{\mathfrak{E}_7}$ contains $1449$ linear factors. Let $d_i$ be the number of linear factors in $\hat{f}_{\mathfrak{E}_7}$ having zero coefficient of $x_1, x_2, \dots, x_{i-1}$ and a non-zero coefficient of $x_i$, then as per the \textsc{GAP} output for ${\mathfrak E_7}(\F_q)$, $d_1 = 1037$, $d_2=299$, $d_3=82$, $d_4=22$, $d_5=7$, $d_6=2$ and $d_7=0$.
Thus, we calculate the coefficient of $x_1^{1037} x_2^{299} x_3^{82} x_4^{22} x_5^7 x_6^2$ in the product of all linear factors in $\hat{f}_{\mathfrak{E}_7}$ through the same $\textsc {GAP}$ code. This coefficient is non-zero if $\Char(\F_q) \ne 2,3$. Hence, by Theorem \ref{Alon's theorem}, if $q>1037$ and $\Char(\F_q) \ne 2,3$, then $\mathfrak E_7(\F_q)$ contains a consistent element. 
\end{proof}

\begin{lemma}\label{lemma: consistent for E_7 (char=2)}
The Lie algebra $\mathfrak E_7(\mathbb{F}_q)$ contains a consistent element if $q>32$ and $\Char (\F_q) = 2$. 
\end{lemma}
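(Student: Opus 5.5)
We follow the same template as Lemma~\ref{lemma: consistent for E_6 (char=2)}. First, we reduce the polynomial $f_{\mathfrak E_7}$ modulo $2$. Its linear factors are $\alpha(x)$, $\alpha(x)-\beta(x)$ and $\alpha(x)+\beta(x)$ for positive roots $\alpha\neq\beta$. Each factor is taken with the integer coefficients listed in Table~\ref{tab:E7-root-linear-forms} and its differences and sums, and is read as an element of $\F_2[x_1,\dots,x_7]$. We run the \textsc{GAP} code of Listing~\ref{lst:gap-for-F4-linear-factors} with the $\mathfrak E_7$ Cartan matrix and an additional normalization step modulo $2$. This step discards any factor that vanishes identically mod $2$, and records that event, because then no consistent element exists. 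It also identifies factors that coincide mod $2$, and collects the overall scalar $C_{\mathfrak E_7}$ modulo $2$. We expect this to show that no factor vanishes and that the scalar is $\equiv 1 \pmod 2$. Over $\F_2$ the linear forms are vectors in $\F_2^7\setminus\{0\}$, so at most $2^7-1=127$ distinct factors remain.

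Second, we apply Theorem~\ref{Alon's theorem} to the square-free reduced product $\hat f$. Its vanishing on $\F_q^7$ is equivalent to the vanishing of $f_{\mathfrak E_7}$, since the two differ by a nonzero scalar and by multiplicities. In the $\mathfrak E_6$ characteristic-$2$ case all $63=2^6-1$ nonzero forms occurred. By analogy we expect all $127$ nonzero vectors of $\F_2^7$ to occur here. In that case $\hat f$ is the product of all nonzero linear forms over $\F_2$, which is the Dickson/Moore determinant up to sign. Ordering the forms by their first nonzero coordinate, the counts are $d_i=2^{7-i}$, that is $64,32,16,8,4,2,1$. The monomial $x_1^{64}x_2^{32}x_3^{16}x_4^{8}x_5^{4}x_6^{2}x_7$ has total degree $127$. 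Its coefficient is $1$, because it is the diagonal term of the Moore determinant $\det(x_j^{2^{i-1}})$, and only the identity permutation produces it. We would confirm this coefficient with the \textsc{GAP} code, as was done for $\mathfrak E_6$.

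Third, we take $S_i=\F_q$ for all $i$. Theorem~\ref{Alon's theorem} then requires $q>\max t_i=64$, which yields a point with $\hat f\neq0$, hence $f_{\mathfrak E_7}\neq 0$, hence a consistent element. This conflicts with the stated bound $q>32$. One possibility is that not all $127$ forms survive, so that the $d_i$ are smaller. Another is that a different top monomial with exponents at most $32$ has nonzero coefficient mod $2$. The main obstacle is therefore to certify, by the \textsc{GAP} computation, a top-degree monomial of $\hat f$ with all exponents at most $32$ and odd coefficient. I would first try the greedy exponent vector given by the $d_i$ counts after normalization. If its largest entry exceeds $32$, I would search for a balanced monomial with exponents capped at $32$ and compute its coefficient mod $2$. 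Here we use that $\F_q$ with $q=2^k>32$ forces $q\ge 64$, so the effective requirement is $\max t_i\le 63$. Consequently the greedy monomial with $t_1=64$ just fails, and a rebalanced one such as $x_1^{63}x_2^{33}\cdots$ may be needed.
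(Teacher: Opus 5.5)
Your template (reduce the linear factors mod $2$, deduplicate, apply the Combinatorial Nullstellensatz to the lexicographic leading monomial) is the paper's. However, your prediction that all $127$ nonzero vectors of $\F_2^7$ occur is false, and that is why you cannot reach $q>32$. The missing idea is that the $E_7$ Cartan matrix $A$ has determinant $2$. Every factor of $f_{\mathfrak E_7}$ has coefficient vector $cA$ with $c\in\mathbb Z^7$ (a root, or a sum or difference of two roots). Modulo $2$ the symmetric matrix $A$ has rank $6$, with kernel spanned by $(0,1,0,0,1,0,1)$, i.e.\ by $\alpha_2+\alpha_5+\alpha_7$. Hence every factor reduces into the hyperplane $W=\{d\in\F_2^7: d_2+d_5+d_7=0\}$, and at most $63$ distinct forms survive.

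Counting nonzero vectors of $W$ by first nonzero coordinate gives $32,16,8,4,2,1,0$, exactly the $\mathfrak E_6$ counts. This is the content of the paper's remark that the \textsc{GAP} output for $\mathfrak E_7$ coincides with that for $\mathfrak E_6$. The lexicographic leading coefficient of a product of linear forms is the product of their leading coefficients. So the coefficient of $x_1^{32}x_2^{16}x_3^{8}x_4^{4}x_5^{2}x_6$ in the deduplicated product is $1$ over $\F_2$; if fewer forms survive, each greedy exponent only drops. The Nullstellensatz with $S_i=\F_q$ then gives the lemma for $q>32$. The only remaining input is that no factor vanishes identically mod $2$, i.e.\ the positive roots of $E_7$ are nonzero and pairwise incongruent modulo twice the weight lattice. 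This follows from $C_{\mathfrak E_7}=\pm1$, already computed for $\Char(\F_q)\neq 2,3$.

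Your fallback in the third step is not just unverified but impossible under your own hypothesis. If all $127$ forms occurred, $\hat f$ would be the Moore determinant $\det\bigl(x_j^{2^{i-1}}\bigr)$. This vanishes whenever the coordinates are $\F_2$-linearly dependent, hence on all of $\F_{64}^7$. By the Nullstellensatz, no top-degree monomial with all exponents at most $63$ could then have nonzero coefficient. So no rebalanced monomial such as $x_1^{63}x_2^{33}\cdots$ exists, and the lemma would actually fail for $q=64$. The bound $q>32$ is available only because of the rank drop of $A$ modulo $2$. Your proposal never identifies this, so as written it stops short of a proof.
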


\begin{proof}
When $\Char (\F_q) = 2$, the $\textsc {GAP}$ output for $\mathfrak E_7(\F_q)$ matches exactly with the output of $\mathfrak E_6(\F_q)$. Hence, this lemma follows similar to Lemma \ref{lemma: consistent for E_6 (char=2)} 
\end{proof}

\begin{lemma}\label{lemma: consistent for E_7 (char=3)}
The Lie algebra $\mathfrak E_7(\mathbb{F}_q)$ contains a consistent element if $q>525$ and $\Char (\F_q) = 3$. 
\end{lemma}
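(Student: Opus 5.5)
The plan mirrors the proofs of Lemma~\ref{lemma: consistent for F_4 (char=3)} and Lemma~\ref{lemma: consistent for E_6 (char=3)}. Start from the $1449$ distinct linear factors of $\hat f_{\mathfrak E_7}$ computed in the proof of Lemma~\ref{lemma (1): consistent for E_7}, with constant $C_{\mathfrak E_7}$, which has no prime divisor. Reduce every factor modulo $3$. Some factors become zero, and I expect this not to happen, as in $\mathfrak E_6$, so this must be checked. Some factors become proportional to others over $\F_3$, since distinct integer forms can coincide up to a unit $\pm1$ modulo $3$.

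The first step is to normalize each reduced factor so that its first nonzero coefficient is $1$, recording the scalar $\pm1$ pulled out. Then merge coincident factors. This gives a factorization
\[
f_{\mathfrak E_7}\equiv C'\prod_{r} L_r^{m_r}\pmod 3,
\]
where the $L_r$ are distinct normalized linear forms over $\F_3$. The \textsc{GAP} code adapted from Listing~\ref{lst:gap-for-F4-linear-factors} should show that $C'\not\equiv0\pmod3$, i.e.\ that the product of the extracted scalars together with $C_{\mathfrak E_7}$ is a unit. Then $f_{\mathfrak E_7}(s)\neq0$ for any $s$ with $\prod_r L_r(s)\neq0$.

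Next, count the distinct forms by their leading variable. Let $d_i$ be the number of $L_r$ whose first nonzero coefficient is at $x_i$. Since each normalized $L_r$ contains its leading variable with coefficient $1$, the product $g=\prod_r L_r$ has the monomial $x_1^{d_1}\cdots x_7^{d_7}$ of top degree, where $\sum d_i=\deg g$. Its coefficient is the product over $r$ of the leading coefficients, which equals $1$ after normalization, plus contributions from other assignments of variables to factors. So the coefficient is not automatically nonzero, and it must be computed modulo $3$ by the same \textsc{GAP} routine.

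The statement's bound $q>525$ suggests $\max_i d_i=d_1=525$. Once the coefficient of $x_1^{525}x_2^{d_2}\cdots x_7^{d_7}$ is shown nonzero in $\F_3$, apply Theorem~\ref{Alon's theorem} with $S_i=\F_q$. This needs $q>d_i$ for all $i$, which holds when $q>525$. It yields $s\in\F_q^7$ with $g(s)\neq0$, hence $f_{\mathfrak E_7}(s)\neq0$, and $T=\sum s_ih_i$ is consistent.

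The main obstacle is the nonvanishing of that top coefficient modulo $3$. If the lexicographic choice fails, I would try other exponent vectors with the same total degree, permuting the role of the variables. The aim is to find one with a nonzero coefficient and $\max t_i\le 525$.
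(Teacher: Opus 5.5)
Your proposal is correct and is essentially the paper's argument. You reduce the (primitive) factors of $\hat f_{\mathfrak E_7}$ modulo $3$; none vanish, and all normalization scalars are $\pm1$. You then deduplicate to the $777$ distinct forms and apply the Combinatorial Nullstellensatz with $S_i=\F_q$ to the monomial $x_1^{525}x_2^{171}x_3^{57}x_4^{17}x_5^{5}x_6^{2}$.

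One correction: that coefficient \emph{is} automatically nonzero, so your fallback to other exponent vectors is never needed. A factor with leading variable $x_i$ has no $x_1,\dots,x_{i-1}$ terms, so the only way to build $x_1^{d_1}\cdots x_7^{d_7}$ is for every factor to contribute its leading variable. The coefficient is therefore exactly the product of the leading coefficients, which is what the paper's code computes.
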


\begin{proof}
When $\Char (\F_q) = 3$, further normalization involves scalars whose product is $\equiv 2 \mod{(3)}$ and we have only $777$ distinct linear factors in this case. We compute the coefficient of $x_1^{525} x_2^{171} x_3^{57} x_4^{17} x_5^5 x_6^2$ in the product of these distinct linear factors modulo $3$. This coefficient is non-zero and hence, by Theorem \ref{Alon's theorem}, there exists a consistent element in this case. 
\end{proof}

\begin{proposition}\label{prop: 2-generation for E_7}
The Lie algebra $\mathfrak E_7(\mathbb{F}_q)$ is $2$-generated.
\end{proposition}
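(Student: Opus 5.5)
The plan follows the pattern of Propositions \ref{prop: 2-generation for G_2}, \ref{prop: 2-generation for F_4} and \ref{prop: 2-generation for E_6}. We split into a large-$q$ range, handled by consistent elements, and a finite list of small $q$, handled by machine verification.

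\textbf{Large $q$.} By Lemmas \ref{lemma (1): consistent for E_7}, \ref{lemma: consistent for E_7 (char=2)} and \ref{lemma: consistent for E_7 (char=3)}, $\mathfrak E_7(\F_q)$ contains a consistent element $T\in\h$ in three situations:
\begin{itemize}
\item[(i)] $\Char(\F_q)\neq 2,3$ and $q>1037$;
\item[(ii)] $\Char(\F_q)=2$ and $q>32$;
\item[(iii)] $\Char(\F_q)=3$ and $q>525$.
\end{itemize}
In each case take $X=\sum_{\alpha\in\Phi}e_\alpha$, the sum of the Chevalley basis root vectors, which has nonzero component in every root space.

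The argument of Lemma \ref{lem:generation-splitting} is stated for classical types but uses only the root space decomposition of $\g$ over $\F_q$. The elements $A_k=[X,T,\dots,T]$ for $0\le k\le |\Phi|-1$ have coordinate matrix $\diag(c_\alpha)\cdot V$ with $V$ a Vandermonde matrix in the distinct values $\alpha(T)$. Hence they span $\bigoplus_\alpha \g_\alpha$, which gives every $e_\alpha$.

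The brackets $[e_\alpha,e_{-\alpha}]=h_\alpha$ then span $\h$ over $\F_q$. This holds because the coroots $h_{\alpha_i}$ form the Chevalley basis of $\h_{\mathbb Z}$, so no characteristic restriction is needed. Thus $T$ and $X$ generate $\mathfrak E_7(\F_q)$ in the range (i)--(iii).

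\textbf{Small $q$.} This leaves finitely many fields: prime powers $q\le 1037$ with $\Char(\F_q)\ge5$; $q\in\{2,4,8,16,32\}$; and $q\in\{3,9,27,81,243\}$. For each of these I would run the \textsc{GAP} routine of Listing \ref{lst:gap 2-generation of small q} on the Chevalley basis $b$ of $\mathfrak E_7$ over $\F_q$. Following the pattern used for $\mathfrak F_4$ and $\mathfrak E_6$, take
\[
x:=\sum_{i=1}^7 b[1][i]+b[3][1],\qquad y:=\sum_{i=1}^7 b[2][i]+b[3][2],
\]
that is, the sums of the positive and of the negative simple root vectors, each perturbed by a Cartan element. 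Then check that the subalgebra they generate has dimension $133$.

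Since $x$ and $y$ have entries in the prime field, for each characteristic it suffices to verify generation over $\F_p$. Lemma \ref{lem:descent} then transfers generation to every $\F_{p^k}$, because the subalgebra generated over $\F_{p^k}$ is the base change of the one generated over $\F_p$. This cuts the computation to the primes $p\le 1037$, plus $p=2,3$.

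\textbf{Main obstacle.} I expect the difficulty to be the small-$q$ verification itself. The list of primes up to $1037$ is long and each case is a $133$-dimensional closure computation. The risk is that for some small prime the chosen pair fails to generate, for instance because the bracket $[e_{\alpha_i},e_{-\alpha_i}]$ or a structure constant degenerates mod $p$ (most plausibly for $p=2$ or $3$). If that happens, the first thing to try is a different Cartan perturbation, or adding further root vectors to $x$ and $y$, for that specific $p$.
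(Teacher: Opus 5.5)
Your outline is the paper's proof. For large $q$ you use consistent elements from the three $\mathfrak E_7$ lemmas together with Lemma~\ref{lem:generation-splitting}. For the remaining $q$ you run a \textsc{GAP} check with the same pair $x,y$. Two remarks follow.

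\textbf{Reduction to prime fields.} This step is correct, and the paper does not make it. But it is the converse of Lemma~\ref{lem:descent}, not an application of it. It follows from the base-change identity you state: the $\F_{p^k}$-span of the iterated brackets of $x,y$ is $M\otimes_{\F_p}\F_{p^k}$. Combined with Lemma~\ref{lem:descent}, this shows that whether the fixed $\F_p$-rational pair $x,y$ generates does not depend on the field $\F_{p^k}$. So the paper's separate checks at $q=4,8,9,\dots$ are redundant.

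\textbf{Case (ii) fails.} The paper's proof has the same defect. In characteristic $2$ one has $(-\alpha)(T)=-\alpha(T)=\alpha(T)$ for every root $\alpha$. Hence no $T\in\h$ takes pairwise distinct values on $\Phi$, which is what Lemma~\ref{lem:generation-splitting} needs.

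The characteristic-$2$ lemma only produces $T$ with $f_{\mathfrak E_7}(T)\neq 0$. The polynomial $f_{\mathfrak g}$ has no factor separating $\alpha$ from $-\alpha$; in odd characteristic that separation follows from $\alpha(T)\neq0$, but in characteristic $2$ it is impossible. So the claimed equivalence ``$T$ consistent $\iff f_{\mathfrak g}(T)\neq 0$'' breaks down when $p=2$.

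Concretely, your Vandermonde matrix has equal rows for $\alpha$ and $-\alpha$. The elements $A_k$ then only yield the combinations $c_\alpha e_\alpha+c_{-\alpha}e_{-\alpha}$, not the individual root vectors. This is irrelevant for Theorem~A, which assumes $p\neq2$, but it matters for the proposition as stated.

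\textbf{The repair is already in your proposal.} Drop (ii), and optionally (iii). Let the single computations over $\F_2$ and $\F_3$ cover every $q=2^k$ and $q=3^k$ by your ascent argument.

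With that change, your proof is complete modulo two things: the machine checks over $\F_p$ for primes $p\le 1037$, and the consistent-element argument for characteristic $p\ge5$ with $q>1037$, which is valid there. The resulting argument is somewhat tighter than the paper's.
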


\begin{proof}
By Lemmas \ref{lemma (1): consistent for E_7}, \ref{lemma: consistent for E_7 (char=2)} and \ref{lemma: consistent for E_7 (char=3)}, $\mathfrak E_7(\mathbb{F}_q)$ contains a consistent element if $q > 1037$, $\Char(\F_q) \ne 2,3$; $q > 32$, $\Char(\F_q) = 2$; and $q > 777$, $\Char(\F_q) = 3$. Thus, by Lemma \ref{lem:generation-splitting}, $\mathfrak E_7(\mathbb{F}_q)$ is $2$-generated in this case. For the remaining values of $q$, a similar \textsc{GAP} code as in Listing \ref{lst:gap 2-generation of small q} verifies that $\mathfrak E_7(\mathbb{F}_q)$ is generated by the following elements.

\begin{align*}
 & x := b[1][1] + b[1][2] + b[1][3] + b[1][4]  + b[1][5] + b[1][6] + b[1][7] + b[3][1],\\
  &y := b[2][1] + b[2][2] + b[2][3] + b[2][4] + b[2][5] + b[2][6] + b[2][7] + b[3][2].   
\end{align*}
\end{proof}

\subsection{$2$-generation for $\mathfrak E_8(\F_q)$}

Let \(\Phi\) be the root system of type \(\mathfrak E_8(\F_q)\), with simple roots
\(\alpha_1,\ldots,\alpha_8\). The Cartan matrix is
\(A=(A_{ij})_{1\le i,j\le 8}\), \(A_{ij}=\alpha_i(h_j)\), where
\[
A=
\begin{pmatrix}
2&0&-1&0&0&0&0&0\\
0&2&0&-1&0&0&0&0\\
-1&0&2&-1&0&0&0&0\\
0&-1&-1&2&-1&0&0&0\\
0&0&0&-1&2&-1&0&0\\
0&0&0&0&-1&2&-1&0\\
0&0&0&0&0&-1&2&-1\\
0&0&0&0&0&0&-1&2
\end{pmatrix}.
\]

The linear factors corresponding to the simple roots are
\[
\begin{aligned}
\alpha_1(x)&=2x_1-x_3, &
\alpha_2(x)&=2x_2-x_4,\\
\alpha_3(x)&=-x_1+2x_3-x_4, &
\alpha_4(x)&=-x_2-x_3+2x_4-x_5,\\
\alpha_5(x)&=-x_4+2x_5-x_6, &
\alpha_6(x)&=-x_5+2x_6-x_7,\\
\alpha_7(x)&=-x_6+2x_7-x_8, &
\alpha_8(x)&=-x_7+2x_8.
\end{aligned}
\]

We use \textsc{GAP} to generate the positive roots of
\(\mathfrak E_8(\F_q)\) and then compute their associated linear factors
(see Table \ref{tab:E8-root-linear-forms}).

\begin{tiny}
\begin{longtable}{c c c c}
\captionsetup{font=tiny}
\caption{Positive roots and linear factors for $\mathfrak E_8$.} \\
\hline
Positive root coefficients & Linear factor coefficients & Positive root coefficients & Linear factor coefficients \\
\hline
\endfirsthead
\hline
Positive root coefficients & Linear factor coefficients & Positive root coefficients & Linear factor coefficients \\
\hline
\endhead
$(1,0,0,0,0,0,0,0)$ & $(2,0,-1,0,0,0,0,0)$ & $(0,1,1,2,2,2,1,0)$ & $(-1,0,0,0,0,1,0,-1)$ \\
$(0,1,0,0,0,0,0,0)$ & $(0,2,0,-1,0,0,0,0)$ & $(0,1,1,2,2,1,1,1)$ & $(-1,0,0,0,1,-1,0,1)$ \\
$(0,0,1,0,0,0,0,0)$ & $(-1,0,2,-1,0,0,0,0)$ & $(1,1,2,3,2,1,0,0)$ & $(0,-1,0,1,0,0,-1,0)$ \\
$(0,0,0,1,0,0,0,0)$ & $(0,-1,-1,2,-1,0,0,0)$ & $(1,1,2,2,2,1,1,0)$ & $(0,0,1,-1,1,-1,1,-1)$ \\
$(0,0,0,0,1,0,0,0)$ & $(0,0,0,-1,2,-1,0,0)$ & $(1,1,2,2,1,1,1,1)$ & $(0,0,1,0,-1,0,0,1)$ \\
$(0,0,0,0,0,1,0,0)$ & $(0,0,0,0,-1,2,-1,0)$ & $(1,1,1,2,2,2,1,0)$ & $(1,0,-1,0,0,1,0,-1)$ \\
$(0,0,0,0,0,0,1,0)$ & $(0,0,0,0,0,-1,2,-1)$ & $(1,1,1,2,2,1,1,1)$ & $(1,0,-1,0,1,-1,0,1)$ \\
$(0,0,0,0,0,0,0,1)$ & $(0,0,0,0,0,0,-1,2)$ & $(0,1,1,2,2,2,1,1)$ & $(-1,0,0,0,0,1,-1,1)$ \\
$(1,0,1,0,0,0,0,0)$ & $(1,0,1,-1,0,0,0,0)$ & $(1,2,2,3,2,1,0,0)$ & $(0,1,0,0,0,0,-1,0)$ \\
$(0,1,0,1,0,0,0,0)$ & $(0,1,-1,1,-1,0,0,0)$ & $(1,1,2,3,2,1,1,0)$ & $(0,-1,0,1,0,-1,1,-1)$ \\
$(0,0,1,1,0,0,0,0)$ & $(-1,-1,1,1,-1,0,0,0)$ & $(1,1,2,2,2,2,1,0)$ & $(0,0,1,-1,0,1,0,-1)$ \\
$(0,0,0,1,1,0,0,0)$ & $(0,-1,-1,1,1,-1,0,0)$ & $(1,1,2,2,2,1,1,1)$ & $(0,0,1,-1,1,-1,0,1)$ \\
$(0,0,0,0,1,1,0,0)$ & $(0,0,0,-1,1,1,-1,0)$ & $(1,1,1,2,2,2,1,1)$ & $(1,0,-1,0,0,1,-1,1)$ \\
$(0,0,0,0,0,1,1,0)$ & $(0,0,0,0,-1,1,1,-1)$ & $(0,1,1,2,2,2,2,1)$ & $(-1,0,0,0,0,0,1,0)$ \\
$(0,0,0,0,0,0,1,1)$ & $(0,0,0,0,0,-1,1,1)$ & $(1,2,2,3,2,1,1,0)$ & $(0,1,0,0,0,-1,1,-1)$ \\
$(1,0,1,1,0,0,0,0)$ & $(1,-1,0,1,-1,0,0,0)$ & $(1,1,2,3,2,2,1,0)$ & $(0,-1,0,1,-1,1,0,-1)$ \\
$(0,1,1,1,0,0,0,0)$ & $(-1,1,1,0,-1,0,0,0)$ & $(1,1,2,3,2,1,1,1)$ & $(0,-1,0,1,0,-1,0,1)$ \\
$(0,1,0,1,1,0,0,0)$ & $(0,1,-1,0,1,-1,0,0)$ & $(1,1,2,2,2,2,1,1)$ & $(0,0,1,-1,0,1,-1,1)$ \\
$(0,0,1,1,1,0,0,0)$ & $(-1,-1,1,0,1,-1,0,0)$ & $(1,1,1,2,2,2,2,1)$ & $(1,0,-1,0,0,0,1,0)$ \\
$(0,0,0,1,1,1,0,0)$ & $(0,-1,-1,1,0,1,-1,0)$ & $(1,2,2,3,2,2,1,0)$ & $(0,1,0,0,-1,1,0,-1)$ \\
$(0,0,0,0,1,1,1,0)$ & $(0,0,0,-1,1,0,1,-1)$ & $(1,2,2,3,2,1,1,1)$ & $(0,1,0,0,0,-1,0,1)$ \\
$(0,0,0,0,0,1,1,1)$ & $(0,0,0,0,-1,1,0,1)$ & $(1,1,2,3,3,2,1,0)$ & $(0,-1,0,0,1,0,0,-1)$ \\
$(1,1,1,1,0,0,0,0)$ & $(1,1,0,0,-1,0,0,0)$ & $(1,1,2,3,2,2,1,1)$ & $(0,-1,0,1,-1,1,-1,1)$ \\
$(1,0,1,1,1,0,0,0)$ & $(1,-1,0,0,1,-1,0,0)$ & $(1,1,2,2,2,2,2,1)$ & $(0,0,1,-1,0,0,1,0)$ \\
$(0,1,1,1,1,0,0,0)$ & $(-1,1,1,-1,1,-1,0,0)$ & $(1,2,2,3,3,2,1,0)$ & $(0,1,0,-1,1,0,0,-1)$ \\
$(0,1,0,1,1,1,0,0)$ & $(0,1,-1,0,0,1,-1,0)$ & $(1,2,2,3,2,2,1,1)$ & $(0,1,0,0,-1,1,-1,1)$ \\
$(0,0,1,1,1,1,0,0)$ & $(-1,-1,1,0,0,1,-1,0)$ & $(1,1,2,3,3,2,1,1)$ & $(0,-1,0,0,1,0,-1,1)$ \\
$(0,0,0,1,1,1,1,0)$ & $(0,-1,-1,1,0,0,1,-1)$ & $(1,1,2,3,2,2,2,1)$ & $(0,-1,0,1,-1,0,1,0)$ \\
$(0,0,0,0,1,1,1,1)$ & $(0,0,0,-1,1,0,0,1)$ & $(1,2,2,4,3,2,1,0)$ & $(0,0,-1,1,0,0,0,-1)$ \\
$(1,1,1,1,1,0,0,0)$ & $(1,1,0,-1,1,-1,0,0)$ & $(1,2,2,3,3,2,1,1)$ & $(0,1,0,-1,1,0,-1,1)$ \\
$(1,0,1,1,1,1,0,0)$ & $(1,-1,0,0,0,1,-1,0)$ & $(1,2,2,3,2,2,2,1)$ & $(0,1,0,0,-1,0,1,0)$ \\
$(0,1,1,2,1,0,0,0)$ & $(-1,0,0,1,0,-1,0,0)$ & $(1,1,2,3,3,2,2,1)$ & $(0,-1,0,0,1,-1,1,0)$ \\
$(0,1,1,1,1,1,0,0)$ & $(-1,1,1,-1,0,1,-1,0)$ & $(1,2,3,4,3,2,1,0)$ & $(-1,0,1,0,0,0,0,-1)$ \\
$(0,1,0,1,1,1,1,0)$ & $(0,1,-1,0,0,0,1,-1)$ & $(1,2,2,4,3,2,1,1)$ & $(0,0,-1,1,0,0,-1,1)$ \\
$(0,0,1,1,1,1,1,0)$ & $(-1,-1,1,0,0,0,1,-1)$ & $(1,2,2,3,3,2,2,1)$ & $(0,1,0,-1,1,-1,1,0)$ \\
$(0,0,0,1,1,1,1,1)$ & $(0,-1,-1,1,0,0,0,1)$ & $(1,1,2,3,3,3,2,1)$ & $(0,-1,0,0,0,1,0,0)$ \\
$(1,1,1,2,1,0,0,0)$ & $(1,0,-1,1,0,-1,0,0)$ & $(2,2,3,4,3,2,1,0)$ & $(1,0,0,0,0,0,0,-1)$ \\
$(1,1,1,1,1,1,0,0)$ & $(1,1,0,-1,0,1,-1,0)$ & $(1,2,3,4,3,2,1,1)$ & $(-1,0,1,0,0,0,-1,1)$ \\
$(1,0,1,1,1,1,1,0)$ & $(1,-1,0,0,0,0,1,-1)$ & $(1,2,2,4,3,2,2,1)$ & $(0,0,-1,1,0,-1,1,0)$ \\
$(0,1,1,2,1,1,0,0)$ & $(-1,0,0,1,-1,1,-1,0)$ & $(1,2,2,3,3,3,2,1)$ & $(0,1,0,-1,0,1,0,0)$ \\
$(0,1,1,1,1,1,1,0)$ & $(-1,1,1,-1,0,0,1,-1)$ & $(2,2,3,4,3,2,1,1)$ & $(1,0,0,0,0,0,-1,1)$ \\
$(0,1,0,1,1,1,1,1)$ & $(0,1,-1,0,0,0,0,1)$ & $(1,2,3,4,3,2,2,1)$ & $(-1,0,1,0,0,-1,1,0)$ \\
$(0,0,1,1,1,1,1,1)$ & $(-1,-1,1,0,0,0,0,1)$ & $(1,2,2,4,3,3,2,1)$ & $(0,0,-1,1,-1,1,0,0)$ \\
$(1,1,2,2,1,0,0,0)$ & $(0,0,1,0,0,-1,0,0)$ & $(2,2,3,4,3,2,2,1)$ & $(1,0,0,0,0,-1,1,0)$ \\
$(1,1,1,2,1,1,0,0)$ & $(1,0,-1,1,-1,1,-1,0)$ & $(1,2,3,4,3,3,2,1)$ & $(-1,0,1,0,-1,1,0,0)$ \\
$(1,1,1,1,1,1,1,0)$ & $(1,1,0,-1,0,0,1,-1)$ & $(1,2,2,4,4,3,2,1)$ & $(0,0,-1,0,1,0,0,0)$ \\
$(1,0,1,1,1,1,1,1)$ & $(1,-1,0,0,0,0,0,1)$ & $(2,2,3,4,3,3,2,1)$ & $(1,0,0,0,-1,1,0,0)$ \\
$(0,1,1,2,2,1,0,0)$ & $(-1,0,0,0,1,0,-1,0)$ & $(1,2,3,4,4,3,2,1)$ & $(-1,0,1,-1,1,0,0,0)$ \\
$(0,1,1,2,1,1,1,0)$ & $(-1,0,0,1,-1,0,1,-1)$ & $(2,2,3,4,4,3,2,1)$ & $(1,0,0,-1,1,0,0,0)$ \\
$(0,1,1,1,1,1,1,1)$ & $(-1,1,1,-1,0,0,0,1)$ & $(1,2,3,5,4,3,2,1)$ & $(-1,-1,0,1,0,0,0,0)$ \\
$(1,1,2,2,1,1,0,0)$ & $(0,0,1,0,-1,1,-1,0)$ & $(2,2,3,5,4,3,2,1)$ & $(1,-1,-1,1,0,0,0,0)$ \\
$(1,1,1,2,2,1,0,0)$ & $(1,0,-1,0,1,0,-1,0)$ & $(1,3,3,5,4,3,2,1)$ & $(-1,1,0,0,0,0,0,0)$ \\
$(1,1,1,2,1,1,1,0)$ & $(1,0,-1,1,-1,0,1,-1)$ & $(2,3,3,5,4,3,2,1)$ & $(1,1,-1,0,0,0,0,0)$ \\
$(1,1,1,1,1,1,1,1)$ & $(1,1,0,-1,0,0,0,1)$ & $(2,2,4,5,4,3,2,1)$ & $(0,-1,1,0,0,0,0,0)$ \\
$(0,1,1,2,2,1,1,0)$ & $(-1,0,0,0,1,-1,1,-1)$ & $(2,3,4,5,4,3,2,1)$ & $(0,1,1,-1,0,0,0,0)$ \\
$(0,1,1,2,1,1,1,1)$ & $(-1,0,0,1,-1,0,0,1)$ & $(2,3,4,6,4,3,2,1)$ & $(0,0,0,1,-1,0,0,0)$ \\
$(1,1,2,2,2,1,0,0)$ & $(0,0,1,-1,1,0,-1,0)$ & $(2,3,4,6,5,3,2,1)$ & $(0,0,0,0,1,-1,0,0)$ \\
$(1,1,2,2,1,1,1,0)$ & $(0,0,1,0,-1,0,1,-1)$ & $(2,3,4,6,5,4,2,1)$ & $(0,0,0,0,0,1,-1,0)$ \\
$(1,1,1,2,2,1,1,0)$ & $(1,0,-1,0,1,-1,1,-1)$ & $(2,3,4,6,5,4,3,1)$ & $(0,0,0,0,0,0,1,-1)$ \\
$(1,1,1,2,1,1,1,1)$ & $(1,0,-1,1,-1,0,0,1)$ & $(2,3,4,6,5,4,3,2)$ & $(0,0,0,0,0,0,0,1)$ \\
\hline
\label{tab:E8-root-linear-forms}
\end{longtable}
\end{tiny}

\begin{lemma}\label{lemma (1): consistent for E_8}
The Lie algebra $\mathfrak E_8(\mathbb{F}_q)$ contains a consistent element if $q>3083$ and $\Char (\F_q) \ne 2,3$. 
\end{lemma}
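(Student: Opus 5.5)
I will follow the same template as Lemmas~\ref{lemma (1): consistent for E_6} and \ref{lemma (1): consistent for E_7}, now for $\mathfrak E_8$.

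\textbf{Step 1: set up the linear forms.} Feed the Cartan matrix of $\mathfrak E_8$ into the \textsc{GAP} code of Listing~\ref{lst:gap-for-F4-linear-factors}. This produces the $120$ positive roots and their linear factors $\beta(x)=(c_1,\dots,c_8)A\cdot x$, listed in Table~\ref{tab:E8-root-linear-forms}.

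\textbf{Step 2: form and normalize the factors of $f_{\mathfrak E_8}$.} Form all $120$ root factors, the $\binom{120}{2}$ differences $\alpha-\beta$, and the sums $\alpha+\beta$ with $\alpha\neq\beta$. Normalize each factor by dividing out the gcd of its coefficients and making the first nonzero coefficient positive. Record the product of the removed scalars; this is $C_{\mathfrak E_8}$ up to sign. Collect the distinct normalized factors to obtain $\hat f_{\mathfrak E_8}$.

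\textbf{Step 3: check the scalar.} Verify that the prime divisors of $C_{\mathfrak E_8}$ lie in $\{2,3\}$, and possibly that it has none, as for $\mathfrak E_6$ and $\mathfrak E_7$. Then $f_{\mathfrak E_8}$ and $C_{\mathfrak E_8}\hat f_{\mathfrak E_8}$ have the same zero set in $\F_q^8$ whenever $\Char(\F_q)\neq2,3$. This needs distinct normalized integer factors to remain distinct and nonzero mod $p$, which is the reason for excluding $2,3$.

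\textbf{Step 4: choose the monomial.} Let $d_i$ be the number of factors of $\hat f_{\mathfrak E_8}$ whose first nonzero coefficient is at $x_i$. I expect $d_1=3083$, which would explain the bound in the statement. Target the monomial $x_1^{d_1}x_2^{d_2}\cdots x_8^{d_8}$. Its total degree equals $\deg\hat f_{\mathfrak E_8}$, so it is a top-degree monomial. Its coefficient in $\hat f_{\mathfrak E_8}$ equals the product of the leading coefficients: a factor led at $x_i$ cannot contribute to a lower-index variable. So the only way to reach exponent $d_1$ on $x_1$ is to take the $x_1$-term from each factor led at $x_1$, and inductively for each later variable. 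This product must be nonzero mod $p$ for $p\neq2,3$. Any prime dividing it should be $2$ or $3$, since leading coefficients of normalized differences and sums of root functionals have small absolute values, up to about $6$. I would confirm this by a \textsc{GAP} factorization, and this is the step where a prime like $5$ could sneak in. If one does, I would reorder the variables, or pick a different top monomial, to get a coefficient avoiding that prime.

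\textbf{Step 5: apply Alon's theorem.} Take $S_i=\F_q$ in Theorem~\ref{Alon's theorem}. The hypotheses $|S_i|=q>d_i$ hold for all $i$ once $q>3083=\max_i d_i$. This gives $s\in\F_q^8$ with $\hat f_{\mathfrak E_8}(s)\neq0$, hence $f_{\mathfrak E_8}(s)\neq0$. Therefore $T=\sum s_ih_i$ is consistent.

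\textbf{Main obstacle.} The main difficulty is computational. The number of factors (about $21\,000$ before deduplication) makes storing the product polynomial infeasible. The lexicographic-leading-term argument of Step 4 avoids this: the coefficient is just a product of integers, and only its prime support must be checked.
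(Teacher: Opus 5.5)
Your proposal is correct and is essentially the paper's proof. Like the paper, you compute $\hat f_{\mathfrak E_8}$ and $C_{\mathfrak E_8}$ in \textsc{GAP}, target the monomial $x_1^{3083}x_2^{1049}\cdots x_7^{5}x_8$ given by the first-nonzero-coordinate counts, note that its coefficient is the product of leading coefficients (exactly what the paper's code computes) and is prime to $p$ for $p\neq 2,3$, and then apply Alon's theorem with $S_i=\F_q$.

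One small correction to Step 3: you do not need distinct normalized factors to remain distinct mod $p$. Primitive forms are automatically nonzero mod $p$, and Alon's theorem does not require square-freeness. So the exclusion of $2,3$ comes only from $C_{\mathfrak E_8}$ (which is $\pm1$ here) and from the leading-coefficient product, whose factors have absolute value at most $3$.
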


\begin{proof}
The \textsc{GAP} output shows that $C_{\mathfrak E_8}$ has no prime divisor and the polynomial $\hat{f}_{\mathfrak{E}_8}$ contains $4560$ linear factors. Let $d_i$ be the number of linear factors in $\hat{f}_{\mathfrak{E}_8}$ having zero coefficient of $x_1, x_2, \dots, x_{i-1}$ and a non-zero coefficient of $x_i$, then $d_1 = 3083$, $d_2=1049$, $d_3=298$, $d_4=75$, $d_5=34$, $d_6=15$, $d_7=5$ and $d_8=1$.
Also, the coefficient of $x_1^{3083} x_2^{1049} x_3^{298} x_4^{75} x_5^{34} x_6^{15} x_7^5 x_8$ in the product of all linear factors in $\hat{f}_{\mathfrak{E}_8}$ is non-zero if $\Char(\F_q) \ne 2,3$. Hence, by Theorem \ref{Alon's theorem}, if $q>3083$ and $\Char(\F_q) \ne 2,3$, then $\mathfrak E_8(\F_q)$ contains a consistent element. 
\end{proof}

\begin{lemma}\label{lemma: consistent for E_8 (char=2)}
The Lie algebra $\mathfrak E_8(\mathbb{F}_q)$ contains a consistent element if $q>128$ and $\Char (\F_q) = 2$. 
\end{lemma}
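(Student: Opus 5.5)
The plan follows the pattern of Lemma~\ref{lemma: consistent for E_6 (char=2)} and Lemma~\ref{lemma: consistent for E_7 (char=2)}. First, I will reduce the $120$ linear forms $\beta(x)=(c_1,\dots,c_8)A\cdot x$ for $\beta\in\Phi^+$ (Table~\ref{tab:E8-root-linear-forms}) modulo $2$. The same will be done for the pairwise differences $\alpha(x)-\beta(x)$ and sums $\alpha(x)+\beta(x)$. In characteristic $2$ a sum and a difference coincide, and each factor is then a linear form over $\F_2$, so it is either zero or one of the $2^8-1=255$ nonzero vectors of $\F_2^8$. Before anything else I will check, with the \textsc{GAP} code of Listing~\ref{lst:gap-for-F4-linear-factors}, that no factor vanishes identically modulo $2$. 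Equivalently, the scalar $C_{\mathfrak E_8}$ produced by normalization must be odd. If this holds, $f_{\mathfrak E_8}$ reduces modulo $2$ to a product of nonzero $\F_2$-linear forms, and only its square-free part $\hat f_{\mathfrak E_8}$ matters.

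Next I will list the distinct normalized factors; at most $255$ occur. I will pick the lexicographic monomial as in the earlier lemmas, taking $t_i$ to be the number of distinct factors whose first nonzero coordinate is $x_i$. If all $255$ vectors appear, then $t_i=2^{8-i}$, and the monomial is $x_1^{128}x_2^{64}x_3^{32}x_4^{16}x_5^{8}x_6^{4}x_7^{2}x_8$. This matches the bound $q>128$, just as $x_1^{32}\cdots x_6$ gave $q>32$ for $\mathfrak E_6$. Its coefficient in the product is $1\bmod 2$. This is because the only way to reach that monomial is to choose the leading variable from each factor. Any other choice would lower the total exponent of the earlier variables, and reaching $x_1^{t_1}$ forces every $x_1$-containing factor to contribute $x_1$, and so on inductively. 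The leading coefficients are all $1$ over $\F_2$. I will nonetheless confirm the count and the coefficient with \textsc{GAP}, as in the $\mathfrak E_6$ case.

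Then Theorem~\ref{Alon's theorem} applies with $S_i=\F_q$, since $q>128\ge t_i$. It gives $s\in\F_q^8$ with $\hat f_{\mathfrak E_8}(s)\neq0$. Because $C_{\mathfrak E_8}\not\equiv 0$, this also gives $f_{\mathfrak E_8}(s)\neq0$, so $T=\sum s_ih_i$ is consistent.

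The main obstacle is the first step: verifying that no root value, difference or sum vanishes identically modulo $2$. A priori $\alpha(x)+\beta(x)\equiv\alpha(x)-\beta(x)$ could be $0$ over $\F_2$ for distinct $\alpha,\beta$, namely when $(\alpha-\beta)A\equiv 0 \pmod 2$. For $\mathfrak E_8$ the Cartan matrix is unimodular, so $A$ is invertible over $\F_2$. Such a vanishing would therefore require $\alpha\equiv\beta$ in $Q/2Q$. Distinct positive roots of $E_8$ are distinct modulo $2Q$: the $240$ roots fall in pairs $\pm$ into the $120$ nonzero classes of norm $2 \bmod 4$. Hence no factor vanishes. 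This structural argument replaces the numerical check, and it also suggests that all $120$ positive roots together with their differences cover every nonzero class, which gives $t_1=128$.
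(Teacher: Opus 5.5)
Your proposal is correct, and its skeleton is the paper's. You reduce the factors of $f_{\mathfrak E_8}$ modulo $2$, take the product $\hat f_{\mathfrak E_8}$ of the distinct nonzero $\F_2$-linear forms, and apply Alon's Combinatorial Nullstellensatz to the lexicographically leading monomial.

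The difference is in how the needed facts are certified. The paper only reports \textsc{GAP} output: an odd normalization scalar, $255$ distinct factors, and a nonzero coefficient of $x_1^{128}x_2^{64}\cdots x_7^2x_8$. You prove these facts instead.
\begin{itemize}
\item No factor vanishes, because $\det A=1$ and distinct positive roots are distinct modulo $2Q$. Indeed, if $\beta\neq\pm\alpha$ and $\tfrac12(\alpha-\beta)\in Q$, its norm would be $1-\tfrac12(\alpha,\beta)\in\{\tfrac12,1,\tfrac32\}$, which is impossible in the even lattice $Q$.
\item The coefficient of the lex-leading monomial of a product of linear forms is the product of their leading coefficients, hence $1$ over $\F_2$.
\end{itemize}

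You do not even need \textsc{GAP} for the count. At most $2^{8-i}$ nonzero vectors of $\F_2^8$ have their first nonzero coordinate in position $i$. So $t_1\le 128<q$ whatever set of forms occurs, and your proof is entirely computer-free. Your guess that all $255$ classes occur is also right: the $135$ nonzero classes of norm $0 \bmod 4$ are represented by sums of two orthogonal roots, and both roots may be taken positive modulo $2Q$.

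The paper's computational route buys uniformity across types and primes. For $\mathfrak E_7$, for instance, $\det A=2$, so your invertibility step would need modification. Your route, in exchange, explains where $255=2^8-1$ and the bound $q>128$ come from.

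One caveat applies to your proof and the paper's alike. In characteristic $2$ one has $\alpha(T)=(-\alpha)(T)$, so no $T$ has pairwise distinct values on all of $\Phi$. Your final sentence, like the paper's, establishes consistency only in the operational sense $f_{\mathfrak E_8}(T)\neq 0$. That means the values $\alpha(T)$, $\alpha\in\Phi^+$, are nonzero and pairwise distinct.
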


\begin{proof}
In this case, the product of scalars obtained by $\textsc {GAP}$ after further normalization is $\equiv 1 \mod{(2)}$. Also, we are left with only $255$ distinct linear factors. The coefficient of $x_1^{128} x_2^{64} x_3^{32} x_4^{16} x_5^8 x_6^4 x_7^2 x_8$ in the product of these distinct linear factors modulo $2$ is non-zero and hence, by Theorem \ref{Alon's theorem}, if $q>128$ and $\Char(\F_q) =2$, then there exists a consistent element in $\mathfrak E_8(\F_q)$. 
\end{proof}

\begin{lemma}\label{lemma: consistent for E_8 (char=3)}
The Lie algebra $\mathfrak E_8(\mathbb{F}_q)$ contains a consistent element if $q>1515$ and $\Char (\F_q) = 3$.
\end{lemma}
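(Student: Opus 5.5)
The plan mirrors the proofs of Lemma~\ref{lemma: consistent for F_4 (char=3)}, Lemma~\ref{lemma: consistent for E_6 (char=3)} and Lemma~\ref{lemma: consistent for E_7 (char=3)}. We reduce all linear factors of $f_{\mathfrak E_8}$ modulo $3$ and normalize each so that its first nonzero coefficient is $1$ in $\F_3$. We then collect distinct factors and apply Theorem~\ref{Alon's theorem} to their product.

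\textbf{Step 1: reduce and normalize.} Start from the $4560$ distinct linear factors of $\hat f_{\mathfrak E_8}$ over $\mathbb Z$ (Lemma~\ref{lemma (1): consistent for E_8}) together with the scalar $C_{\mathfrak E_8}$. Reduce every factor $L_r$ modulo $3$. Discard nothing: a factor that vanished identically mod $3$ would force $f_{\mathfrak E_8}\equiv 0$. So the first task is to confirm that no $L_r$ has all coefficients divisible by $3$. This holds because each $L_r$ is primitive over $\mathbb Z$ by the normalization convention, and $C_{\mathfrak E_8}$ has no prime divisors.

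Next, rescale each reduced factor by $\pm1$ so that its leading nonzero coefficient is $1$. Record the product of the scalars used, which is a power of $-1$ and hence nonzero mod $3$. This shows $f_{\mathfrak E_8}\bmod 3$ equals a nonzero constant times $\prod_j M_j^{e_j}$, where the $M_j$ are pairwise distinct normalized linear forms over $\F_3$. Consequently, a point where $\prod_j M_j\neq 0$ gives $f_{\mathfrak E_8}\neq0$, and hence a consistent element.

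\textbf{Step 2: choose the monomial.} Let $d_i$ be the number of distinct $M_j$ whose first nonzero coordinate is $x_i$. Consider the monomial $x_1^{d_1}\cdots x_8^{d_8}$, where the statement indicates $d_1=1515$. Its total degree equals the number of distinct $M_j$. Its coefficient in $\prod_j M_j$ is not simply the product of leading coefficients, because a factor with leading variable $x_i$ can also contribute a later variable. So the coefficient must be computed honestly mod $3$ by the same \textsc{GAP} routine as in Listing~\ref{lst:gap-for-F4-linear-factors}. One could try to organize this by expanding variable by variable.

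\textbf{Step 3: conclude.} Once that coefficient is verified nonzero in $\F_3$, apply Theorem~\ref{Alon's theorem} with $S_i=\F_q$. Every $d_i\le d_1=1515<q$, so this yields $s\in\F_q^8$ with $\prod_j M_j(s)\neq0$, and therefore $f_{\mathfrak E_8}(s)\neq0$. Then $T=\sum_i s_ih_i$ is consistent.

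\textbf{Main obstacle.} The hard step is certifying that this specific coefficient is nonzero mod $3$. This is a large symbolic expansion over roughly two to three thousand factors. If it turned out to vanish, I would try permuting the variable order, or choosing a different monomial of the same degree with exponents all below $q$, until one with nonzero coefficient is found.
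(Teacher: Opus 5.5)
Your route is the paper's. You reduce the factors of $f_{\mathfrak E_8}$ modulo $3$ and rescale by $\pm1$; the paper records the product of these scalars as $\equiv 2 \pmod 3$. You keep the $2320$ distinct normalized forms and apply Theorem~\ref{Alon's theorem} to $x_1^{1515}x_2^{537}x_3^{177}x_4^{56}x_5^{22}x_6^{9}x_7^{3}x_8$, whose exponents are exactly your leading-variable counts $d_i$.

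However, the step you call the main obstacle rests on a false claim, so your proposal leaves the decisive nonvanishing unestablished. For this particular monomial the coefficient \emph{is} the product of the leading coefficients. Write $\ell(j)$ for the index of the leading variable of $M_j$. A nonzero term in the expansion of $\prod_j M_j$ picks from each $M_j$ a variable $x_{\sigma(j)}$ with $\sigma(j)\ge \ell(j)$. Only the $d_1+\cdots+d_i$ factors with $\ell(j)\le i$ can supply any of $x_1,\dots,x_i$. The monomial demands exactly $d_1+\cdots+d_i$ such variables, so $\sigma^{-1}(\{1,\dots,i\})=\ell^{-1}(\{1,\dots,i\})$ for every $i$, i.e.\ $\sigma=\ell$. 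Put differently, a factor led by $x_i$ that contributed a later variable would leave a deficit in the degree of $x_1,\dots,x_i$ that no other factor can fill.

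Hence exactly one term survives, and the coefficient equals $\prod_j (M_j)_{\ell(j)}$, which is $1$ in $\F_3$ after your normalization. This is what the paper's \textsc{GAP} routine evaluates (the product of first nonzero coefficients modulo $p$). It is also the mechanism spelled out in the proof for $\mathfrak F_4$. So no large symbolic expansion is needed, the coefficient cannot vanish, and your fallback of permuting variables or changing the monomial is moot.

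A smaller point: Theorem~\ref{Alon's theorem} needs $q>\max_i d_i$, and $d_i\le d_1$ is not automatic. It holds here because the counts are $1515,537,177,56,22,9,3,1$, but you have to read this off the computation rather than assert it.
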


\begin{proof}
When $\Char (\F_q) = 3$, further normalization involves scalars whose product is $\equiv 2 \mod{(3)}$ and we have $2320$ distinct linear factors in this case. The coefficient of $x_1^{1515} x_2^{537} x_3^{177} x_4^{56} x_5^{22} x_6^9 x_7^3 x_8$ in the product of these distinct linear factors modulo $3$ is non-zero and hence, by Theorem \ref{Alon's theorem}, there exists a consistent element in this case. 
\end{proof}

\begin{proposition}\label{prop: 2-generation for E_8}
The Lie algebra $\mathfrak E_8(\mathbb{F}_q)$ is $2$-generated.
\end{proposition}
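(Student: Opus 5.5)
The proof follows the same pattern as Propositions~\ref{prop: 2-generation for E_6} and~\ref{prop: 2-generation for E_7}. We split into a ``large $q$'' regime, handled by consistent elements, and a finite list of ``small $q$'' cases, handled by explicit computation.

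\textbf{Large $q$.} Lemmas~\ref{lemma (1): consistent for E_8}, \ref{lemma: consistent for E_8 (char=2)} and~\ref{lemma: consistent for E_8 (char=3)} give a consistent element $T\in\mathfrak h$ in three situations:
\begin{itemize}
\item $q>3083$ with $\Char(\F_q)\neq 2,3$;
\item $q>128$ with $\Char(\F_q)=2$;
\item $q>1515$ with $\Char(\F_q)=3$.
\end{itemize}
In each case, take $X=\sum_{\alpha\in\Phi}e_\alpha$, the sum of the Chevalley basis root vectors. This $X$ is defined over $\F_q$ and has nonzero component in every root space.

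The argument of Lemma~\ref{lem:generation-splitting} is then applied verbatim. The iterated brackets $[X,T,\dots,T]$ give $\diag(c_\alpha)$ times a Vandermonde matrix in the distinct values $\alpha(T)$, which is invertible. Hence all $e_\alpha$ lie in $\langle T,X\rangle$. The brackets $[e_\alpha,e_{-\alpha}]=h_\alpha$ then recover $\mathfrak h$, since the coroots of simple roots span $\mathfrak h$ in the simply-laced $E_8$ case in any characteristic. Note that Lemma~\ref{lem:generation-splitting} was stated for classical types, but its proof uses only the root space decomposition. Here $T$ is already $\F_q$-rational, so no descent step (Lemma~\ref{lem:descent}) is needed.

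\textbf{Small $q$.} The remaining cases are finite in number:
\begin{itemize}
\item $q\le 128$ with $q$ a power of $2$;
\item $q\le 1515$ with $q$ a power of $3$;
\item $q\le 3083$ with $\Char(\F_q)\ge 5$.
\end{itemize}
For each, we run the analogue of the \textsc{GAP} program of Listing~\ref{lst:gap 2-generation of small q}, with the Chevalley basis $b$ of $\mathfrak E_8$ and the candidate pair
\[
x:=\sum_{i=1}^{8}b[1][i]+b[3][1],\qquad y:=\sum_{i=1}^{8}b[2][i]+b[3][2],
\]
that is, sums of positive root vectors plus a Cartan element for $x$, and sums of negative root vectors plus a Cartan element for $y$. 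We check that the $\F_q$-span of iterated brackets has dimension $248$.

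As a check on this choice: $x$ contains all simple root vectors $e_{\alpha_i}$ plus $h_1$, and $y$ contains all $e_{-\alpha_i}$ plus $h_2$. The brackets with $h_1,h_2$ separate the root-height components only partially, which is why a computer check is needed rather than a hand argument. Since the structure constants are integers and the verification only needs to be done over the prime field in principle, we would first verify generation over $\F_p$ when possible and then apply Lemma~\ref{lem:descent} to $\F_q/\F_p$. This gives the result for all powers of $p$ at once and drastically cuts the list to primes $p\le 3083$.

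\textbf{Main obstacle.} The main obstacle is the small-$q$ computation. $\mathfrak E_8$ has dimension $248$, and there are hundreds of primes below $3083$. The subalgebra closure must be computed efficiently, by iteratively bracketing a growing basis with $x$ and $y$ and doing row reduction over $\F_p$.

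If the fixed pair fails for some small $p$, the fallback is to search for an alternative $T\in\mathfrak h(\F_p)$ that is consistent in the weaker sense needed. Alternatively, we can randomise coefficients in $x$ and $y$ until the dimension count reaches $248$.
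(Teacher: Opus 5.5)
Your plan is the paper's proof. You use the same split, the same three consistent-element lemmas feeding Lemma~\ref{lem:generation-splitting} for large $q$, and the same \textsc{GAP} pair $x,y$ for the remaining $q$. Your extra remark, that checking over $\F_p$ settles every power of $p$, is correct and not in the paper. Its justification, though, is not Lemma~\ref{lem:descent}, which runs in the opposite direction (extension to subfield). What you need is the trivial ascent: the $\F_q$-span of the iterated brackets of $x,y$ is $M\otimes_{\F_p}\F_q$, as in the proof of that lemma.

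There is a genuine failure in the large-$q$ step in characteristic $2$. Lemma~\ref{lem:generation-splitting} needs $\alpha(T)\neq\beta(T)$ for all distinct $\alpha,\beta\in\Phi$. In characteristic $2$, however, $(-\alpha)(T)=-\alpha(T)=\alpha(T)$ for every $T\in\mathfrak h$. So the Vandermonde matrix has equal rows for $\alpha$ and $-\alpha$, and no element is consistent in the required sense. The polynomial $f_{\mathfrak g}$ cannot see this, because the missing factor $2\alpha(x)$ is only redundant when $p\neq 2$.

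For your choice $X=\sum_{\alpha\in\Phi}e_\alpha$ the conclusion is in fact false. In $\mathfrak E_8$ all $N_{\alpha,\beta}=\pm1$, so in characteristic $2$ the linear map $\omega$ with $\omega(e_\alpha)=e_{-\alpha}$ and $\omega|_{\mathfrak h}=\mathrm{id}$ is an automorphism (the reduction of the Chevalley involution). It fixes both $T$ and $X$. Hence $\langle T,X\rangle$ lies in the $128$-dimensional fixed subalgebra $\mathfrak h\oplus\bigoplus_{\alpha>0}\F_q(e_\alpha+e_{-\alpha})$ for every $T$. The paper's characteristic-$2$ consistency lemma has the same defect.

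Your own ascent remark repairs this. A single \textsc{GAP} check of the pair $x,y$ over $\F_2$ (one of the paper's small cases) covers every $q=2^k$. Likewise the $\F_3$ check covers all powers of $3$, and the $\F_p$ checks cover all powers of $p$ for $5\le p\le 3083$. The consistent-element argument is then needed only for $p>3083$, where it is valid.
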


\begin{proof}
By Lemmas~\ref{lemma (1): consistent for E_8}, \ref{lemma: consistent for E_8 (char=2)}, and \ref{lemma: consistent for E_8 (char=3)}, $\mathfrak E_8(\mathbb{F}_q)$ contains a consistent element if $q > 3083$, $\Char(\F_q) \ne 2,3$; $q > 128$, $\Char(\F_q) = 2$; and $q > 1515$, $\Char(\F_q) = 3$. Thus, by Lemma \ref{lem:generation-splitting}, $\mathfrak E_8(\mathbb{F}_q)$ is $2$-generated in this case. For the remaining values of $q$, a similar \textsc{GAP} code as in Listing \ref{lst:gap 2-generation of small q} verifies that $\mathfrak E_8(\mathbb{F}_q)$ is generated by the following elements.

\begin{align*}
 & x := b[1][1] + b[1][2] + b[1][3] + b[1][4]  + b[1][5] + b[1][6] + b[1][7] + b[1][8] + b[3][1],\\
  &y := b[2][1] + b[2][2] + b[2][3] + b[2][4] + b[2][5] + b[2][6] + b[2][7] + b[2][8] + b[3][2].   
\end{align*}
\end{proof}

\begin{proof}[Proof of Theorem \ref{thm:two-generation exceptional}] It follows directly from Propositions \ref{prop: 2-generation for G_2}, \ref{prop: 2-generation for F_4}, \ref{prop: 2-generation for E_6}, \ref{prop: 2-generation for E_7}, and \ref{prop: 2-generation for E_8}.
\end{proof}

\begin{lstlisting}[
   caption={\textsc{GAP} code to show   $\G_2(\F_q)$ is $2$-generated for $q=5,7$ and $11$.},  
   label={lst:gap 2-generation of small q},
   basicstyle=\ttfamily\fontsize{8pt}{9pt}\selectfont
]
LoadPackage("liealgdb");
TestG2Pair := function(q)
    local L, b, x, y, S, dimS;
    
    L := SimpleLieAlgebra("G", 2, GF(q));
    b := ChevalleyBasis(L);    
    x := b[1][1] + b[1][2] + b[3][1];
    y := b[2][1] + b[2][2] + b[3][2];
    S := Subalgebra(L, [x, y]);
    dimS := Dimension(S);
    Print("q = ", q, ", dimension = ", dimS, "\n");
    if dimS = 14 then
        Print("  -> SUCCESS: generates G2 over GF(", q, ")\n");
        return true;
    else
        Print("  -> FAIL: dimension is ", dimS, " (expected 14)\n");
        return false;
    fi;
end;

# Run the test for q = 5, 7, 11

for q in [5, 7, 11] do
    TestG2Pair(q);
    Print("\n");
od;
\end{lstlisting}

\begin{lstlisting}[
    caption={\textsc{GAP} code for \(\mathfrak F_4\) to compute positive roots, distinct linear factors, and other related data.},
    label={lst:gap-for-F4-linear-factors},
    basicstyle=\ttfamily\fontsize{8pt}{9pt}\selectfont
]
CartanF4 := [
    [ 2, -1,  0,  0],
    [-1,  2, -2,  0],
    [ 0, -1,  2, -1],
    [ 0,  0, -1,  2]
];
RootsF4 := IdentityMat(4);
posF4 := 1;

while posF4 <= Length(RootsF4) do
    betaF4 := RootsF4[posF4];
    for iF4 in [1..4] do
        valF4 := Sum([1..4],
            jF4 -> betaF4[jF4] * CartanF4[jF4][iF4]);
        newF4 := ShallowCopy(betaF4);
        newF4[iF4] := newF4[iF4] - valF4;

        if Position(RootsF4, newF4) = fail then
            Add(RootsF4, newF4);
        fi;
    od;
    posF4 := posF4 + 1;
od;

PosRootsF4 := Filtered(RootsF4,
    betaF4 -> ForAll(betaF4, cF4 -> cF4 >= 0));

# Root coefficients to linear-form coefficients: d = c A
#

RootFormF4 := function(betaF4)
    return List([1..4], iF4 ->
        Sum([1..4],
            jF4 -> betaF4[jF4] * CartanF4[jF4][iF4]));
end;

LinearFormsF4 := List(PosRootsF4, RootFormF4);
Print("Total roots: ", Length(RootsF4), "\n");
Print("Positive roots: ", Length(PosRootsF4), "\n");

if Length(RootsF4) <> 48 or Length(PosRootsF4) <> 24 then
    Error("Expected 48 roots and 24 positive roots. Check CartanF4.");
fi;
Print("\n--- Positive roots and corresponding linear forms ---\n");

for iF4 in [1..Length(PosRootsF4)] do
    Print("Root coefficients: ", PosRootsF4[iF4],
          "   Linear form coefficients: ",
          LinearFormsF4[iF4], "\n");
od;

all_factors := [];
# 1. Linear forms themselves
for r in LinearFormsF4 do
    Add(all_factors, r);
od;
# 2. Differences r_i - r_j, i < j
for i in [1..Length(LinearFormsF4)] do
    for j in [i+1..Length(LinearFormsF4)] do
        Add(all_factors, LinearFormsF4[i] - LinearFormsF4[j]);
    od;
od;
# 3. Sums r_i + r_j, ordered pairs i <> j
for i in [1..Length(LinearFormsF4)] do
    for j in [1..Length(LinearFormsF4)] do
        if i <> j then
            Add(all_factors, LinearFormsF4[i] + LinearFormsF4[j]);
        fi;
    od;
od;
Print("\nTotal factors (with repetitions): ",
      Length(all_factors), "\n");

# Normalize a 4-vector over Q

NormalizeForm := function(v)
    local w, g, denoms, commonDenom, i;

    w := ShallowCopy(v);
    denoms := List(w, x -> DenominatorRat(x));
    commonDenom := Lcm(denoms);
    w := List(w, x -> NumeratorRat(x * commonDenom));
    g := Gcd(List(w, x -> AbsInt(x)));

    if g = 0 then
        return [0,0,0,0];
    fi;
    w := List(w, x -> QuoInt(x, g));
    for i in [1..Length(w)] do
        if w[i] <> 0 then
            if w[i] < 0 then
                w := List(w, x -> -x);
            fi;
            break;
        fi;
    od;
    return w;
end;

# Deduplicate primitive forms and retain their occurrences

distinct_forms := [];
scalars_per_form := [];
for f in all_factors do
    nf := NormalizeForm(f);
    pos := Position(distinct_forms, nf);

    if pos = fail then
        Add(distinct_forms, nf);
        Add(scalars_per_form, [f]);
    else
        Add(scalars_per_form[pos], f);
    fi;
od;
Print("\nNumber of distinct primitive forms: ",
      Length(distinct_forms), "\n");
Print("Sum of multiplicities: ",
      Sum(List(scalars_per_form, Length)), "\n");

# Product of all scalars relating occurrences to normalized forms

prodScalars := 1;
for i in [1..Length(distinct_forms)] do
    nf := distinct_forms[i];
    for f in scalars_per_form[i] do
        for k in [1..4] do
            if nf[k] <> 0 then
                prodScalars := prodScalars * (f[k] / nf[k]);
                break;
            fi;
        od;
    od;
od;
Print("\nProduct of all scalars = ", prodScalars, "\n");

PrimeList := function(n)
    n := AbsInt(n);
    if n <= 1 then
        return [];
    fi;
    return Set(FactorsInt(n));
end;

PrintFactorization := function(label, n)
    local fs;
    n := AbsInt(n);
    Print(label, " = ", n, "\n");
    if n = 1 then
        Print("  (trivial: no prime factors)\n");
        return;
    fi;
    fs := FactorsInt(n);
    Print("  Factorization (with multiplicity): ", fs, "\n");
    Print("  Distinct primes: ", Set(fs), "\n");
end;

# Factorization and prime divisors of prodScalars

Print("\n--- Factorization of prodScalars ---\n");
S := AbsInt(prodScalars);
Print("prodScalars = ", prodScalars, "\n");
Print("Using S = |prodScalars| = ", S, "\n");
PrintFactorization("|prodScalars|", S);
primeDivisorsS := PrimeList(S);
Print("Prime divisors of S: ", primeDivisorsS, "\n");

# Count distinct forms by first nonzero coordinate over Q

Print("\n--- Counting distinct forms by zero patterns (over Q) ---\n");
counts := [0,0,0,0];
for f in distinct_forms do
    for i in [1..4] do
        if f[i] <> 0 then
            counts[i] := counts[i] + 1;
            break;
        fi;
    od;
od;
Print("Total distinct forms: ", Length(distinct_forms), "\n");
for i in [1..4] do
    Print("Forms whose first nonzero coefficient is x",
          i, ": ", counts[i], "\n");
od;
n_zero := Number(distinct_forms,
    f -> ForAll(f, x -> x = 0));
Print("Zero forms: ", n_zero, "\n");

# Product of first nonzero coefficients over Q

ProductFirstNonZero := function(forms)
    local prod, f, i, found;
    prod := 1;
    for f in forms do
        found := false;
        for i in [1..Length(f)] do
            if f[i] <> 0 then
                prod := prod * f[i];
                found := true;
                break;
            fi;
        od;
        if not found then
            Print("Warning: zero form encountered!\n");
        fi;
    od;
    return prod;
end;
P_first_nonzero := ProductFirstNonZero(distinct_forms);
Print("\nProduct of first non-zero coefficients = ",
      P_first_nonzero, "\n");
Print("Factorization of |P_first_nonzero| = ",
      FactorsInt(AbsInt(P_first_nonzero)), "\n");
Print("Distinct primes of |P_first_nonzero| = ",
      PrimeList(P_first_nonzero), "\n");

# Per-prime GF(p) calculations for
# p in PrimeList(P_first_nonzero) \ PrimeList(S)

Print("Per-prime GF(p) calculation (with projective deduplication)\n");

scalar_primes := PrimeList(S);
P_primes := PrimeList(P_first_nonzero);
target_primes := Difference(P_primes, scalar_primes);

Print("Primes dividing S: ", scalar_primes, "\n");
Print("Primes dividing P_first_nonzero: ", P_primes, "\n");
Print("Target primes: ", target_primes, "\n");

ProductFirstNonZeroModP := function(forms, p)
    local F, prod, f, i, found;
    F := GF(p);
    prod := One(F);
    for f in forms do
        found := false;
        for i in [1..Length(f)] do
            if f[i] <> Zero(F) then
                prod := prod * f[i];
                found := true;
                break;
            fi;
        od;
        if not found then
            Print("Warning: zero form encountered mod ", p, "\n");
        fi;
    od;
    return prod;
end;

# Projective normalization in GF(p)

NormalizeModP := function(v, F)
    local i, inv;
    for i in [1..Length(v)] do
        if v[i] <> Zero(F) then
            inv := Inverse(v[i]);
            return List(v, x -> x * inv);
        fi;
    od;
    return ShallowCopy(v);
end;

# Per-prime calculations

for p in target_primes do
    Print("Characteristic p = ", p, "\n");
    Print("-------------------------------------------------\n");

    F := GF(p);

    # 1. Reduce distinct integer forms modulo p
    reduced := List(distinct_forms,
        f -> List(f, x -> Int(x) * One(F)));

    # 2. Projective normalization
    normalized := List(reduced, v -> NormalizeModP(v, F));

    # 3. Group by projective class and record scalar lambda.
    # reduced[i] = lambda * representative; zero has lambda = 0.

    classes := [];
    lambdas := [];
    for i in [1..Length(reduced)] do
        v := reduced[i];
        if ForAll(v, x -> x = Zero(F)) then
            pos := Position(classes, v);
            if pos = fail then
                Add(classes, v);
                pos := Length(classes);
            fi;
            Add(lambdas, Zero(F));
         else
            nv := normalized[i];
            pos := Position(classes, nv);
            if pos = fail then
                Add(classes, nv);
                pos := Length(classes);
            fi;
            for k in [1..4] do
                if v[k] <> Zero(F) then
                    Add(lambdas, v[k] / classes[pos][k]);
                    break;
                fi;
            od;
        fi;
    od;
    distinct_forms_modp := ShallowCopy(classes);

    Print("\n--- Projective deduplication mod ", p, " ---\n");
    Print("Forms before deduplication: ", Length(reduced), "\n");
    Print("Distinct projective classes mod ", p, ": ",
          Length(distinct_forms_modp), "\n");

    n_new := Number(distinct_forms_modp,
        f -> not ForAll(f, x -> x = Zero(F)));
    if n_new = 0 then
        Print("Every form collapses to zero mod ", p, ".\n");
    fi;

    # 4. Product of collapse scalars modulo p

    prodScalarsModP := One(F);
    for lam in lambdas do
        prodScalarsModP := prodScalarsModP * lam;
    od;
    Print("\nProduct of collapse scalars mod ", p, " = ",
          Int(prodScalarsModP), "\n");
    if Int(prodScalarsModP) > 1 then
        Print("Factorization: ",
              FactorsInt(Int(prodScalarsModP)), "\n");
        Print("Distinct primes: ",
              PrimeList(Int(prodScalarsModP)), "\n");
    else
        Print("Product is 0 or 1; no nontrivial prime factorization.\n");
    fi;

    # 5. Zero-pattern counts on the deduplicated set
    Print("\n--- Zero-pattern counts mod ", p, " ---\n");
    counts_p := [0,0,0,0];
    for f in distinct_forms_modp do
        for i in [1..4] do
            if f[i] <> Zero(F) then
                counts_p[i] := counts_p[i] + 1;
                break;
            fi;
        od;
    od;
    Print("Total distinct forms mod ", p, ": ",
          Length(distinct_forms_modp), "\n");

    for i in [1..4] do
        Print("Forms whose first nonzero coefficient is x",
              i, " mod ", p, ": ", counts_p[i], "\n");
    od;

    n_zero := Number(distinct_forms_modp,
        f -> ForAll(f, x -> x = Zero(F)));
    Print("Forms reducing to the zero vector mod ", p, ": ",
          n_zero, "\n");

# 6. Product of first nonzero coefficients modulo p

Print("\n--- Product of first nonzero coefficients mod ", p, " ---\n");
P_p := ProductFirstNonZeroModP(distinct_forms_modp, p);
Print("Product modulo ", p, " = ", P_p, "\n");
od;
Print("Done. Processed target primes: ", target_primes, "\n");
\end{lstlisting}

\section{Proof of the main theorem}
\label{section:proof}
Finally, we present the proof of our main result, which follows directly from the results in Sections~\ref{section:classical} and~\ref{section:exceptional}.

\begin{proof}[Proof of Theorem~A] The classical case follows from Theorem~\ref{thm:two-generation classical}, the main theorem of \cite{cjz}, and the fact that \(D_1\) and \(D_2\) are not simple. The exceptional case follows from Theorem~\ref{thm:two-generation exceptional} and the fact that \(\G_2(\F_q)\) is not simple when \(\Char(\F_q)=3\). This completes the proof.
\end{proof}

\end{document}